\newfont{\cyr}{wncyr10 scaled 1100}
\newfont{\cyrr}{wncyr9 scaled 1000}
\theoremstyle{plain}
\newtheorem{theorem}{Theorem}[section]
\newtheorem{proposition}[theorem]{Proposition}
\newtheorem{lemma}[theorem]{Lemma}
\newtheorem{corollary}[theorem]{Corollary}
\theoremstyle{definition}
\newtheorem{definition}[theorem]{Definition}
\newtheorem{assumption}[theorem]{Assumption}
\theoremstyle{remark}
\newtheorem{remark}[theorem]{Remark}
\newcommand{\Q}{\mathbb Q}
\newcommand{\Z}{\mathbb Z}
\newcommand{\R}{\mathbb R}
\newcommand{\C}{\mathbb C}
\DeclareMathOperator{\Aut}{Aut}
\DeclareMathOperator{\Frob}{Frob}
\DeclareMathOperator{\Hom}{Hom}
\DeclareMathOperator{\Gal}{Gal}
\DeclareMathOperator{\GL}{GL}
\DeclareMathOperator{\M}{M}
\DeclareMathOperator{\CH}{CH}
\DeclareMathOperator{\Ta}{Ta}
\newcommand{\res}{\mathrm{res}}
\newcommand{\cores}{\mathrm{cores}}
\newcommand{\cor}{\mathrm{cores}}
\newcommand{\tr}{\mathrm{tr}}
\newcommand{\ord}{\mathrm{ord}}
\newcommand{\cont}{\mathrm{cont}}
\definecolor{Indigo}{rgb}{0.2,0.1,0.7}
\definecolor{Violet}{rgb}{0.5,0.1,0.7}
\definecolor{White}{rgb}{1,1,1}
\definecolor{Green}{rgb}{0.1,0.9,0.2}
\newcommand{\longmono}{\mbox{\;$\lhook\joinrel\longrightarrow$\;}}
\newcommand{\longepi}{\mbox{\;$\relbar\joinrel\twoheadrightarrow$\;}}
\newcommand{\dirlim}{\mathop{\varinjlim}\limits}
\newcommand{\invlim}{\mathop{\varprojlim}\limits}
\newcommand{\p}{\mathfrak p}
\newcommand{\cO}{{\mathcal O}}
\newcommand{\E}{{\mathcal E}}
\newcommand{\HH}{{\mathcal H}}
\begin{document}

\title[Iwasawa theory of Heegner cycles, I]{Iwasawa theory of Heegner cycles, I.\\Rank over the Iwasawa algebra}

%\date{}

\author{Matteo Longo and Stefano Vigni}

\thanks{The two authors are supported by PRIN 2010--11 ``Arithmetic Algebraic Geometry and Number Theory''. The first author is also supported by PRAT 2013 ``Arithmetic of Varieties over Number Fields''; the second author is also supported by PRA 2012 ``Geometria Algebrica e Geometria Aritmetica''.}

\begin{abstract}
Iwasawa theory of Heegner points on abelian varieties of $\GL_2$ type has been studied by, among others, Mazur, Perrin-Riou, Bertolini and Howard. The purpose of this paper, the first in a series of two, is to describe extensions of some of their results in which abelian varieties are replaced by the Galois cohomology of Deligne's $p$-adic representation attached to a modular form $f$ of even weight $>2$. In this more general setting, the role of Heegner points is played by higher-dimensional Heegner cycles in the sense of Nekov\'a\v{r}. In particular, we prove that the Pontryagin dual of a certain Bloch--Kato Selmer group associated with $f$ has rank $1$ over a suitable anticyclotomic Iwasawa algebra.
\end{abstract}

\address{Dipartimento di Matematica, Universit\`a di Padova, Via Trieste 63, 35121 Padova, Italy}
\email{mlongo@math.unipd.it}
\address{Dipartimento di Matematica, Universit\`a di Genova, Via Dodecaneso 35, 16146 Genova, Italy}
\email{vigni@dima.unige.it}

\subjclass[2010]{11R23, 11F11}

\keywords{Iwasawa theory, modular forms, Heegner cycles}

\maketitle

%\tableofcontents

\section{Introduction}

Initiated by Mazur's paper \cite{Maz2}, Iwasawa theory of Heegner points on abelian varieties of $\GL_2$ type (most notably, elliptic curves) has been investigated by, among others, Perrin-Riou (\cite{PR}), Bertolini (\cite{Ber1}, \cite{Ber2}) and Howard (\cite{Ho1}, \cite{Ho2}). A recurrent theme in all these works is the study of pro-$p$-Selmer groups, where $p$ is a prime number, in terms of Iwasawa modules built out of compatible families of Heegner points over the anticyclotomic $\Z_p$-extension of an imaginary quadratic field. In particular, several results on the structure of Selmer groups obtained by Kolyvagin by using his theory of Euler systems (\cite{Kol-Euler}) were generalized to an Iwasawa-theoretic setting.

The goal of the present paper, the first in a series of two, is to describe extensions of some of the results of the previously mentioned authors in which abelian varieties are replaced by the Galois cohomology of Deligne's $p$-adic representation attached to a modular form $f$ of even weight $>2$. In this context, the role of Heegner points is played by Heegner cycles, which were introduced by Nekov\'a\v{r} in \cite{Nek} in order to extend Kolyvagin's theory to Chow groups of Kuga--Sato varieties. More precisely, the main result of our article shows that the Pontryagin dual of a certain Bloch--Kato Selmer group associated with $f$ has rank $1$ over a suitable anticyclotomic Iwasawa algebra. 

Let $N\geq3$ be an integer, let $k\geq4$ be an even integer and let $f$ be a normalized newform of weight $k$ and level $\Gamma_0(N)$, whose $q$-expansion will be denoted by
\[ f(q)=\sum_{n\geq1}a_nq^n. \] 
Fix an imaginary quadratic field $K$ of discriminant coprime to $Np$ in which all the prime factors of $N$ split and let $p$ be a prime number not dividing $N$. For simplicity, we suppose that $\cO_K^\times=\{\pm1\}$, i.e., that $K\not=\Q(\sqrt{-1})$ and $K\not=\Q(\sqrt{-3})$. Fix also embeddings $K\hookrightarrow\C$ and $\bar\Q\hookrightarrow\bar\Q_p$. Write $F$ for the number field generated over $\mathbb Q$ by the Fourier coefficients $a_n$ of $f$ and let $\cO_F$ be its ring of integers. Let $\p$ be a prime ideal of $\mathcal O_F$ above $p$ and denote by $V_{f,\p}$ the $p$-adic representation of $\Gal(\bar\Q/\Q)$ attached to $f$ and $\p$ by Deligne (\cite{Del-Bourbaki}). If $F_\p$ is the completion of $F$ at $\p$ then $V_{f,\p}$ is an $F_\p$-vector space of dimension $2$ equipped with a continuous action of $\Gal(\bar\Q/\Q)$. Write $\cO_\p$ for the valuation ring of $F_\p$ and set $a_p':=a_p\big/p^{k/2-1}$. 

Throughout this article we shall always work under the following

\begin{assumption} \label{ass} 
\begin{enumerate} 
\item $V_{f,\p}$ is non-exceptional (\cite[Definition 6.1]{BesDM});
\item $p\nmid 6N\phi(N)h_K(k-2)!$ where $\phi$ is Euler's function and $h_K$ is the class number of $K$;  
\item  $p$ does not ramify in $F$;
\item \label{ass-cond} $a_p'\in\cO_\p^\times$;
\item $a_p'\not\equiv2\pmod{p}$ if $p$ splits in $K$ and $a_p'\not\equiv1\pmod{p}$ if $p$ is inert in $K$. 
\end{enumerate} 
\end{assumption}

Additional cohomological conditions that allow us to obtain an analogue of Mazur's control theorem for elliptic curves will be listed and made precise in Assumption \ref{ass1}.

In light of results of Serre on eigenvalues of Hecke operators (\cite[\S 7.2]{Serre-Cheb}), it seems reasonable to expect that \eqref{ass-cond} in Assumption \ref{ass} holds for infinitely many primes $p$ (at least if $F\not=\Q$). In fact, questions of this sort appear to lie in the circle of ideas of the conjectures of Lang and Trotter on the distribution of traces of Frobenius automorphisms acting on elliptic curves (\cite{LT}) and of their extensions to higher weight modular forms (\cite{MM}, \cite{MMS}).

Let $K_\infty$ be the anticyclotomic $\Z_p$-extension of $K$ (i.e., the $\Z_p$-extension of $K$ that is dihedral over $\Q$), set $G_\infty:=\Gal(K_\infty/K)$ and form the Iwasawa algebra $\Lambda:=\cO_\p[\![G_\infty]\!]$. As in \cite{Nek}, one can define a $\Gal(\bar{\mathbb Q}/\mathbb Q)$-representation $A$ that is a quotient of the $k/2$-twist of $V_{f,\p}$; denote by $H^1_f(K_\infty,A)$ the Bloch--Kato Selmer group  of $A$ over $K_\infty$. Finally, let \[\mathcal X_\infty:=\Hom_{\cO_\p}^{\rm cont}\bigl(H^1_f(K_\infty,{A}),F_\p/\cO_\p\bigr)\] be the Pontryagin dual of $H^1_f(K_\infty,A)$, which turns out to be finitely generated over $\Lambda$.

Our main result is the following

\begin{theorem} \label{main-thm}
The $\Lambda$-module $\mathcal X_\infty$ has rank $1$.
\end{theorem}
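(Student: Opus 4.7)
The plan is to follow the strategy of Bertolini--Howard for $\GL_2$-type abelian varieties, adapted to higher weight via Nekov\'a\v{r}'s Heegner cycles on Kuga--Sato varieties. The argument splits into a lower bound (rank $\geq 1$) and an upper bound (rank $\leq 1$); both rely on an Iwasawa-theoretic compatible family of Heegner cycles.

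First, for each $n\geq 0$ let $K_n$ be the ring class field of conductor $p^n$ (so $K_\infty=\bigcup_n K_n$). Following \cite{Nek}, I would construct Heegner cycles $y_n\in\CH^{k/2}(X_n)_0$ on a suitable Kuga--Sato variety $X_n/K_n$ and apply the $p$-adic \'etale Abel--Jacobi map to obtain classes $z_n\in H^1(K_n,T)$, where $T\subset V_{f,\p}(k/2)$ is a Galois-stable $\cO_\p$-lattice. The key geometric input is to compute $\cor_{K_{n+1}/K_n}(z_{n+1})$ in terms of the Hecke operator $U_p$; Assumption \ref{ass}\eqref{ass-cond} (ordinarity at $\p$) then lets one form the standard $U_p$-stabilized combinations $\tilde z_n$, producing a norm-coherent system $\mathfrak{z}_\infty\in\invlim_n H^1(K_n,T)=:H^1_{\mathrm{Iw}}(K_\infty,T)$. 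A separate local check at $\p$, using the ordinary filtration on $V_{f,\p}$, shows $\tilde z_n\in H^1_f(K_n,T)$.

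Next, I would prove a Mazur-style control theorem asserting that the restriction maps
\[ H^1_f(K_n,A)\longrightarrow H^1_f(K_\infty,A)^{\Gal(K_\infty/K_n)} \]
have finite kernel and cokernel of bounded order, using the cohomological hypotheses of Assumption \ref{ass1} (irreducibility of the residual representation and control of local conditions at bad and archimedean primes). By Nakayama's lemma this reduces the theorem to a statement about generic $\Lambda$-rank. For the lower bound, I would show that $\mathfrak{z}_\infty$ is not $\Lambda$-torsion in $H^1_{\mathrm{Iw}}(K_\infty,T)$; combined with Poitou--Tate global duality and the local conditions cut out by the ordinary filtration at $\p$, this yields $\mathrm{rank}_\Lambda\mathcal X_\infty\geq 1$. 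The upper bound would come from a Kolyvagin-style Euler system argument: extending the Heegner construction to ring class fields of conductor $np^m$ with $n$ squarefree in a set of admissible primes, and forming Kolyvagin derivative classes, one bounds $\mathcal X_\infty$ modulo the submodule generated by $\mathfrak{z}_\infty$, forcing equality of ranks.

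The main obstacle will be proving that $\mathfrak{z}_\infty$ is non-torsion over $\Lambda$, i.e.\ that its images under infinitely many specializations $\Lambda\to\bar F_\p$ are nonzero. In the weight-$2$ case this is Cornut--Vatsal non-vanishing; in our setting one must establish the analogous non-triviality for Abel--Jacobi images of Heegner cycles along the anticyclotomic tower and check that the $U_p$-stabilization used to enforce norm-compatibility does not inadvertently kill the class. A secondary difficulty is verifying the precise norm and Euler-factor relations for cycles on Kuga--Sato varieties, which requires a geometric analysis (degeneracy maps between modular curves and their self-products) genuinely heavier than the corresponding Hecke-theoretic computation for Heegner points on elliptic curves.
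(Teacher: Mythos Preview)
Your outline is essentially the paper's strategy: norm-compatible Heegner cycles supply the lower bound via a non-triviality result along the anticyclotomic tower, and a $\Lambda$-adic Kolyvagin argument following Bertolini gives the upper bound by showing the kernel of $\mathcal X_\infty\twoheadrightarrow\mathcal E_\infty^\vee$ is torsion. Two small corrections: the hypothesis $a_p'=a_p/p^{k/2-1}\in\cO_\p^\times$ is not $p$-ordinarity but a slope-$(k/2-1)$ condition, and the paper obtains norm-compatibility directly from the Hecke relation (Lemma~\ref{lemma-Heegner}) rather than by $U_p$-stabilization or an ordinary filtration; moreover, the Euler system step uses only a single Kolyvagin prime $\ell_m$ at each level, not the full squarefree-conductor apparatus. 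The non-vanishing obstacle you identify is resolved exactly as you suspect, by Howard's higher-weight extension of Cornut \cite{Ho-JNT}.
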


This is the counterpart of \cite[Theorem A]{Ber1}. The key tool in the proof of Theorem \ref{main-thm} is the Iwasawa module of Heegner cycles, which is denoted by $\mathcal H_\infty$ in the main body of the text. The $\Lambda$-module $\mathcal H_\infty$ is built out of the systematic supply of Heegner cycles introduced by Nekov\'a\v{r} (\cite{Nek}), which are higher-dimensional analogues of classical Heegner points. Since we are interested in Iwasawa-theoretic results, we work with Heegner cycles of $p$-power conductor; these cycles live in the Selmer groups $H^1_f(K_m,A)$, where for every $m\geq1$ the field $K_m$ is the subextension of $K_\infty/K$ such that $\Gal(K_m/K)\simeq\Z/p^m\Z$. 

As will become apparent later, our strategy follows \cite{Ber1} closely; in fact, we extend to Heegner cycles the $\Lambda$-adic Kolyvagin method developed by Bertolini. From a slightly different point of view, analogous results could presumably be obtained by exploiting the theory of Kolyvagin systems due to Mazur and Rubin (\cite{MR}). More precisely, we expect that our constructions can easily be adapted to the formalism of \cite{MR}, as was done by Howard in \cite{Ho1} for Heegner points on elliptic curves and by Fouquet in \cite{Fouquet} in the context of big Heegner points, thus leading to one divisibility in the relevant Iwasawa-type Main Conjecture. However, in this paper we preferred to adopt the more direct and explicit approach of Bertolini, and hence to work over the Iwasawa algebra only instead of deducing results for the Iwasawa algebra from specializations to discrete valuation rings as in \cite{MR}.

We observe that a crucial role in our arguments is played by a result of Howard (\cite[Theorem A]{Ho-JNT}) that extends to Heegner cycles and the cohomology of $V_{f,\p}$ a theorem of Cornut (\cite{Co}) asserting the non-triviality of Heegner points on an elliptic curve $E_{/\Q}$ as one ascends $K_\infty$. We also note that a version of Theorem \ref{main-thm} in the context of Hida families of Hilbert modular forms was proved by Nekov\'a\v{r} in \cite[Theorem 12.9.11, (ii)]{Nek-Selmer}. It is worth pointing out that Nekov\'a\v{r}'s result applies to all but finitely many members of a given family and that his proof, relying on his theory of Selmer complexes and (of course) on Hida-theoretic techniques, is different from ours. 

We conclude this introduction by remarking that in the sequel \cite{LV2} to this paper we shall study the torsion submodule $\mathcal X_\infty^{\rm tors}$ of $\mathcal X_\infty$. In particular, we will generalize a divisibility result due to Howard (\cite[Theorem B]{Ho2}) as well as a result of Bertolini concerning the structure of the annihilator of $\mathcal X_\infty^{\rm tors}$ (\cite[Theorem B]{Ber1}). Applications to a Main Conjecture \emph{\`a la} Perrin-Riou will also be given.\\ 

For the convenience of the reader, we collect below all the basic notation that will be used throughout this paper.

\subsubsection*{Notation} \label{notation}

$\bullet$ If $G$ is an abelian group and $n\geq1$ is an integer then $G_n$ denotes the $n$-torsion subgroup of $G$. 

$\bullet$ For any field $K$ we fix an algebraic closure $\bar K$ of $K$ and write $G_K:=\Gal(\bar K/K)$ for the absolute Galois group of $K$. In the special case where $K$ is a number field, we shall view $K$ as a subfield of $\bar{\mathbb Q}$ (in other words, we choose $\bar K=\bar{\mathbb Q}$).

$\bullet$ If $M$ is a continuous $G_K$-module then $H^i_\cont(K,M)$ is the $i$-th continuous cohomology group of $G_K$ with coefficients in $M$. We will usually write $H^i(K,M)$ as a shorthand for $H^i_\cont(K,M)$ and $M(K)$ as a shorthand for $H^0(K,M)$. 

$\bullet$ If $K/F$ is an algebraic field extension then we denote by 
\[ \res_{K/F}:H^i(F,M)\longrightarrow H^i(K,M),\qquad\cores_{K/F}:H^i(K,M)\longrightarrow H^i(F,M) \] 
the restriction and corestriction maps in cohomology, respectively. We recall that, for $K/F$ finite and Galois, we have the equality
\begin{equation} \label{res-cores-norm} 
\res_{K/F}\circ\cores_{K/F}=\tr _{K/F},
\end{equation} 
where $\tr _{K/F}:=\sum_{\sigma\in\Gal(K/F)}\sigma$ is the Galois trace map on $H^i(K,M)$. Further, if $v$ is a place of a number field $K$ then we denote by 
\[ \res_v:H^i(K,M)\longrightarrow H^i(K_v,M) \] 
the restriction (localization) map at $v$, which is determined by the choice of an embedding of $\bar{\mathbb Q}$ into $\bar K_v$.

$\bullet$ If $K$ is a number field and $\ell$ is a prime number then we denote by $K_\ell:=\oplus_{\lambda\mid \ell}K_\lambda$ the direct sum of the completions $K_\lambda$ of $K$ at all the primes $\lambda$ above $\ell$, and we set $H^i(K_\ell,M):=\oplus_{\lambda\mid \ell}H^i(K_\lambda,M)$. 

$\bullet$ If $G$ is a profinite group, $p$ is a prime and $R$ is the ring of integers of a finite extension of $\Q_p$ then $R[\![G]\!]$ is the Iwasawa algebra of $G$ with coefficients in $R$. For an $R[\![G]\!]$-module $M$ we write $M^G$ and $M_G$ for the largest $R$-submodule and quotient of $M$, respectively, on which $G$ acts trivially. 

$\bullet$ If $K$ is a local field then $K^{\rm nr}$ denotes the maximal unramified extension of $K$ inside $\bar K$, so that $I_K:=\Gal(\bar K/K^{\rm nr})$ is the inertia group of $K$. When $K$ is a number field and $v$ is a place of $K$ we also write $I_v$ for $I_{K_v}$. In particular, for all prime numbers $\ell$ we let $F_\ell$ be the arithmetic Frobenius in $\Gal(\Q_\ell^{\rm nr}/\Q_\ell)$. We also fix field embeddings $\bar\Q\hookrightarrow\bar\Q_\ell$ and, with an abuse of notation, when dealing with a $G_\Q$-module that is unramified at $\ell$ we often adopt the same symbol to denote a lift of $F_\ell$ to $G_{\Q_\ell}$ (and its image in $G_\Q$). 

$\bullet$ If $L/E$ is a Galois extension of number fields, $\lambda$ is a prime of $E$ that is unramified in $L$ and $\lambda'$ is a prime of $L$ above $\lambda$ then $\Frob_{\lambda'/\lambda}\in\Gal(L/E)$ denotes the Frobenius substitution at $\lambda'$; the conjugacy class of $\Frob_{\lambda'/\lambda}$ in $\Gal(L/E)$ will be denoted by $\Frob_\lambda$ (notation not reflecting dependence on $L$).

$\bullet$ For an algebraic variety $V$ defined over a field $E$ of characteristic $0$, an integer $r$ such that $0\leq r\leq\dim(V)$ and a prime number $p$, we let 
\[ {\rm AJ}_{V/E}^{(r)}:{\CH}^{r}(V/E)\longrightarrow H^1\bigl(E,H^{2r-1}_{\text{\'et}}(\bar V,\Z_p)\bigr) \] 
denote the corresponding $p$-adic Abel--Jacobi map, with $\bar V:=V\otimes_E\bar E$. We simply write ${\rm AJ}_E$ in place of ${\rm AJ}_{V/E}^{(r)}$ when $V$ and $r$ are clear from the context. 

\section{Bloch--Kato Selmer groups in $\Z_p$-extensions}

Our goal in this section is to introduce the Selmer groups we shall be interested in and state a ``control theorem'' for them.

\subsection{Bloch--Kato Selmer groups} \label{sec-Bloch-Kato} 

We begin with a general discussion of Selmer groups of $p$-adic representations. 

For a number field $E$ and a $p$-adic representation $V$ of the Galois group $G_E$ (by which we mean, as usual, a finite-dimensional $\Q_p$-vector space $V$ equipped with a continuous action of $G_E$), the \emph{Bloch--Kato Selmer group} of $V$ over $E$ (\cite[Sections 3 and 5]{BK}) is the group $H^1_f(E,V)$ that makes the sequence 
\[ 0\longrightarrow H^1_f(E,V)\longrightarrow H^1(E,V)\xrightarrow{\prod_v\partial_v}\prod_vH^1_s(E_v,V) \] 
exact. Here the product is taken over all places of $E$, we set
\[ H^1_s(E_v,V):=H^1(E_v,V)/H^1_f(E_v,V), \] 
the symbol $H^1_f(E_v,V)$ denotes the local Bloch--Kato condition at $v$ (see \cite[\S 2.4]{LV}) and 
\[ \partial_v:H^1(E,V)\longrightarrow H^1_s(E_v,V) \] 
is the composition of the restriction $H^1(E,V)\rightarrow H^1(E_v,V)$ with the canonical projection. If $T$ is a $G_E$-stable lattice in $V$ then set $A:=V/T$ and, for every integer $n\geq1$, let $A_{p^n}$ denote the $p^n$-torsion of $A$. There is a canonical isomorphism $A_{p^n}\simeq T/p^nT$. 

The projection $p:V\twoheadrightarrow A$ and the inclusion $i:T\hookrightarrow V$ induce 
maps 
\[ p: H^1(E,V)\longrightarrow H^1(E,A),\qquad i: H^1(E,T)\longrightarrow H^1(E,V); \] 
let us define $H^1_f(E,A):=p\bigl(H^1_f(E,V)\bigr)$ and $H^1_f(E,T):=i^{-1}\bigl(H^1_f(E,V)\bigr)$. Furthermore, the inclusion $i_n:A_{p^n}\hookrightarrow A$ and the projection $p_n:T\twoheadrightarrow T/p^nT$ induce maps 
\[ i_n:H^1(E,A_{p^n})\longrightarrow H^1(E,A),\qquad p_n:H^1(E,T)\longrightarrow H^1(E,T/p^nT); \]
we set $H^1_f(E,T/p^nT):=p_n\bigl( H^1_f(E,T)\bigr)$ and $H^1_f(E,A_{p^n}):=i_n^{-1}\bigl(H^1_f(E,A)\bigr)$. It can be checked that the isomorphisms $A_{p^n}\simeq T/p^nT$ induce isomorphisms $H^1_f(E,A_{p^n})\simeq H^1_f(E,T/p^nT)$ between Selmer groups.

%These groups sit in the following commutative diagram: 
%\[\xymatrix{
%&&H^1_f(E,A_{p^n})\ar[d]^{i_n}
%\\
%H^1_f(E,T)\ar[r]^{i}\ar@{->>}[d]^{p_n}&
%H^1_f(E,V)\ar@{->>}[r]^{p} &  
%H^1_f(E,A)\\
%H^1_f(E,T/p^nT)&&
%}\] 

Now let $M\in\bigl\{V, T, A, A_{p^n}, T/p^nT\bigr\}$. If $L/E$ is a finite extension of number fields then restriction and corestriction induce maps 
\[ \res_{L/E}:H^1_f(E,M)\longrightarrow H^1_f(L,M),\qquad\cor_{L/E}:H^1_f(L,M)\longrightarrow H^1_f(E,M). \] 
Finally, if $E$ is a number field and $\ell$ is a prime number then we set
\[ H^i_f(E_\ell,M):=\bigoplus_{\lambda|\ell}H^i_f(E_\lambda,M),\quad H^i_s(E_\ell,M):=\bigoplus_{\lambda|\ell}H^i_s(E_\lambda,M),\quad\partial_\ell:=\bigoplus_{\lambda|\ell}\partial_\lambda, \]
the direct sums being taken over the primes $\lambda$ of $E$ above $\ell$.

\subsection{The anticyclotomic $\Z_p$-extension of $K$} \label{anticyclotomic-subsec}

For every integer $m\geq0$ write $H_{p^m}$ for the ring class field of $K$ of conductor $p^m$. By assumption, $p\nmid h_K=|\Gal(H_1/K)|$ and, since $p$ is unramified in $K$, we also have $p\nmid |\Gal(H_p/H_1)|$. Moreover, $\Gal(H_{p^{m+1}}/H_{p})\simeq\Z/p^m\Z$ for all $m\geq1$. It follows that for every $m\geq1$ there is a splitting 
\[ \Gal(H_{p^{m+1}}/K)\simeq G_m\times\Delta \] 
with $G_m\simeq\Z/p^m\Z$ and $\Delta\simeq\Gal(H_p/K)$ of order prime to $p$. For every $m\geq1$ define $K_m$ as the fixed field of $G_m$; then $K_m$ is a subfield of $H_{p^{m+1}}$ such that 
\[ G_m=\Gal(K_m/K)\simeq\Z/p^m\Z. \]
The field $K_\infty:=\cup_{m\geq1}K_m$ is the anticyclotomic $\Z_p$-extension of $K$; equivalently, it is the $\Z_p$-extension of $K$ that is (generalized) dihedral over $\Q$. Set
\[ G_\infty:=\varprojlim_m G_m=\Gal(K_\infty/K)\simeq\Z_p. \] 
For every integer $m\geq0$ define $\Gamma_m:=\Gal(K_\infty/K_m)$. Furthermore, for every $m\geq1$ set $\Lambda_m:=\cO_\p[G_m]$ and define
\[ \Lambda:=\varprojlim_m\Lambda_m=\cO_\p[\![G_\infty]\!]. \]
Here the inverse limit is taken with respect to the maps induced by the natural projections
$G_{m+1}\rightarrow G_m$. For all $m\geq1$ fix a generator $\gamma_m$ of $G_m$ in such a way that ${\gamma_{m+1}|}_{K_m}=\gamma_m$; then $\gamma_\infty:=(\gamma_1,\dots,\gamma_m,\dots)$ is a topological generator of $G_\infty$. It is well known that the map
\begin{equation} \label{iwasawa-isom-eq}
\Lambda\overset\simeq\longrightarrow\cO_\p[\![X]\!],\qquad\gamma_\infty\longmapsto1+X 
\end{equation}
is an isomorphism of $\cO_\p$-algebras (see, e.g., \cite[Proposition 5.3.5]{NSW}).

For an abelian pro-$p$ group $M$ write $M^\vee:=\Hom_{\Z_p}^{\rm cont}(M,\Q_p/\Z_p)$ for its Pontryagin dual, equipped with the compact-open topology (here $\Hom_{\Z_p}^{\rm cont}$ denotes continuous homomorphisms of $\Z_p$-modules and $\Q_p/\Z_p$ is discrete). In the rest of the paper it will be convenient to use also the alternative definition $M^\vee:=\Hom_{\cO_\p}^{\rm cont}(M,F_\p/\cO_\p)$, where $\Hom_{\cO_\p}^{\rm cont}$ stands for continuous homomorphisms of $\cO_\p$-modules and $F_\p/\cO_\p$ is given the discrete topology. It turns out that the two definitions are equivalent, as there is a non-canonical isomorphism between $\Hom_{\cO_\p}^{\rm cont}(M,F_\p/\cO_\p)$ and $\Hom_{\Z_p}^{\rm cont}(M,\Q_p/\Z_p)$ that depends on the choice of a $\Z_p$-basis of $\cO_\p$. See, e.g., \cite[Lemma 2.4]{Bru} for details. 

\subsection{Torsion subgroups in compact modules} \label{torsion-subsec}

In this subsection and the next we collect results on Pontryagin duals that will be used in the sequel and for which we were unable to find convenient references in the literature.

Let $R$ be a topological commutative noetherian ring. In the next sections $R$ will always be a commutative noetherian local ring $(R,\mathfrak m)$ that is an $\mathfrak m$-adically complete $\Z_p$-algebra. Moreover, let $M$ denote a compact Hausdorff abelian group, endowed with a structure of a topological $R$-module. For an ideal $I\subset R$ let $M[I]$ denote the $I$-torsion submodule of $M$, that is 
\[ M[I]:=\bigl\{m\in M\mid\text{$xm=0$ for all $x\in I$}\bigr\}. \] 
Finally, equip the Pontryagin dual $M^\vee$ with the $R$-module structure given by $(x\varphi)(m):=\varphi(xm)$ for all $\varphi\in M^\vee$, $m\in M$ and $x\in R$. 

\begin{lemma} \label{C1} 
There is a canonical isomorphism of $R$-modules 
\[ M^\vee/IM^\vee\simeq M[I]^\vee. \]
\end{lemma}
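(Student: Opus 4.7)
The plan is to set up a canonical pairing and show it induces the desired isomorphism, using the exactness of Pontryagin duality together with the fact that $R$ is noetherian so that $I$ is finitely generated.

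The natural evaluation pairing $\langle\,,\,\rangle:M\times M^\vee\to F_\p/\cO_\p$ satisfies $\langle xm,\varphi\rangle=\langle m,x\varphi\rangle$ by the very definition of the $R$-action on $M^\vee$. Consequently, any $\varphi\in IM^\vee$ kills $M[I]$, so restriction to $M[I]$ factors through $IM^\vee$ and yields a canonical $R$-linear map
\[ \alpha:M^\vee/IM^\vee\longrightarrow M[I]^\vee,\qquad [\varphi]\longmapsto\varphi|_{M[I]}. \]
This is where canonicity (in particular, independence of all auxiliary choices below) comes from.

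To show $\alpha$ is an isomorphism, I would invoke the noetherian hypothesis to pick generators $I=(x_1,\dots,x_n)$ and consider the continuous $R$-module homomorphism $\phi:M\to M^n$ defined by $\phi(m):=(x_1m,\dots,x_nm)$. By definition $\ker(\phi)=M[I]$; since $M$ is compact Hausdorff and $\phi$ is continuous, $M[I]$ is a closed $R$-submodule of $M$, so the sequence
\[ 0\longrightarrow M[I]\longrightarrow M\xrightarrow{\;\phi\;} M^n \]
is a short exact sequence in the category of compact Hausdorff abelian groups. Applying Pontryagin duality, which is contravariantly exact on this category (and interchanges it with the category of discrete abelian groups), yields an exact sequence
\[ (M^\vee)^n\xrightarrow{\;\phi^\vee\;} M^\vee\longrightarrow M[I]^\vee\longrightarrow0. \]

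A direct unwinding of the definitions using the pairing gives $\phi^\vee(\varphi_1,\dots,\varphi_n)=\sum_{i=1}^n x_i\varphi_i$, so $\mathrm{image}(\phi^\vee)=IM^\vee$. Hence the exact sequence above identifies $M^\vee/IM^\vee$ with $M[I]^\vee$, and by construction this identification coincides with $\alpha$, proving the claim.

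The routine points to verify are that the $R$-module structures match throughout (which follows from the formula $\langle xm,\varphi\rangle=\langle m,x\varphi\rangle$) and that Pontryagin duality really is exact in the compact/discrete setting we are using, with $M[I]$ closed in $M$; the only genuine ingredient that requires the standing hypotheses is the noetherian assumption on $R$, needed to reduce to the finitely generated case. I do not expect any real obstacle: once the generators are fixed the argument is purely formal, and the resulting isomorphism is seen to be independent of the generators by construction of $\alpha$.
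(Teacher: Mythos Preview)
Your proof is correct and follows essentially the same strategy as the paper: both choose generators $x_1,\dots,x_n$ of $I$, consider the map $m\mapsto(x_im)_i$, dualize, and identify the image of the dual map with $IM^\vee$. Your choice of codomain $M^n$ (rather than the paper's $\prod_i x_iM$) makes the verification that $\mathrm{im}(\phi^\vee)=IM^\vee$ a one-line computation, whereas the paper must additionally extend functionals from $x_iM$ back to $M$ using compactness of $x_iM$; your preliminary construction of the canonical map $\alpha$ also makes independence from the choice of generators explicit.
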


\begin{proof} Write $I=(x_1,\dots,x_n)$ and consider the map 
\[ \xi:M\longrightarrow\prod_{i=1}^nx_iM,\qquad m\longmapsto{(x_im)}_{i=1,\dots,n}, \] 
whose kernel is equal to $M[I]$. If $i:M[I]\hookrightarrow M$ denotes inclusion then Pontryagin duality gives an exact sequence of $R$-modules
\begin{equation} \label{exact-dual-eq}
\Bigg(\prod_{i=1}^nx_iM\Bigg)^{\!\!\!\vee}\overset{\xi^\vee}\longrightarrow M^\vee\overset {i^\vee} \longrightarrow M[I]^\vee\longrightarrow0; 
\end{equation}
here the surjectivity of $i^\vee$ is a consequence of $M[I]$ being closed in $M$, hence compact. On the other hand, sending $(\varphi_1,\dots,\varphi_n)$ to $\sum_i\varphi_i$ gives an isomorphism between $\prod_i(x_iM)^\vee$ and $\bigl(\prod_ix_iM\bigr)^{\!\vee}$, so we can rewrite \eqref{exact-dual-eq} as
\begin{equation} \label{exact-dual-eq2}
\prod_{i=1}^n(x_iM)^\vee\overset{\xi^\vee}\longrightarrow M^\vee\overset {i^\vee} \longrightarrow M[I]^\vee\longrightarrow0. 
\end{equation}
In light of \eqref{exact-dual-eq2}, we want to check that ${\rm im}(\xi^\vee)=IM^\vee$. First of all, let $\varphi=\sum_{i=1}^nx_i\varphi_i\in IM^\vee$, with $\varphi_i\in M^\vee$ for all $i=1,\dots,n$. Then $\varphi=\xi^\vee\bigl(({\varphi_1|}_{x_1M},\dots,{\varphi_n|}_{x_nM})\bigr)$, which shows that $\varphi\in{\rm im}(\xi^\vee)$. Conversely, let $\varphi\in{\rm im}(\xi^\vee)$; by definition, for every $i=1,\dots,n$ there exists $\varphi_i\in(x_iM)^\vee$ such that $\varphi=\xi^\vee\bigl((\varphi_1,\dots,\varphi_n)\bigr)$. For every $i$, the $R$-module $x_iM$ is compact because $M$ is, hence the inclusion $x_iM\hookrightarrow M$ gives a surjection $M^\vee\twoheadrightarrow(x_iM)^\vee$. Now for every $i=1,\dots,n$ choose a lift $\psi_i\in M^\vee$ of $\varphi_i$. It follows that $\varphi=\sum_{i=1}^nx_i\psi_i\in IM^\vee$, and the proof is complete. \end{proof} 

\subsection{Application to Iwasawa algebras} \label{torsion-iwasawa-subsec}

As in \S \ref{anticyclotomic-subsec}, let $\gamma_\infty$ be a topological generator of $G_\infty\simeq\Z_p$. Let $I_\infty=(\gamma_\infty-1)$ be the augmentation ideal of $\Lambda$ and for every integer $n\geq0$ consider the ideal $I_n:=\bigl(\gamma_\infty^{p^n}-1\bigr)$ of $\Lambda$; in particular, $I_0=I_\infty$. 

Now let $M$ be a continuous $\Lambda$-module. As before, the dual $M^\vee$ inherits a structure of continuous $\Lambda$-module. Since $\gamma_\infty^{p^n}$ is a topological generator of $\Gamma_n$, for all $n\geq0$ there are equalities
\begin{equation} \label{app-pontr}
M[I_n]=M^{\Gamma_n},\qquad M_{\Gamma_n}=M/I_nM.
\end{equation}

\begin{proposition} \label{C2} 
If $M$ is compact then 
\[ {(M^\vee)}_{\Gamma_n}=M^\vee/I_nM^\vee\simeq\bigl(M^{\Gamma_n}\bigr)^\vee \]
for every $n\geq0$.
\end{proposition}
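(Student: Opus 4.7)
The plan is to recognize that this proposition is an immediate combination of the two tautological equalities \eqref{app-pontr} with Lemma \ref{C1}; no further ingredients should be needed.

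First, I would check that the ambient ring $R=\Lambda$ and the ideal $I=I_n$ verify the hypotheses of Lemma \ref{C1}: via the isomorphism \eqref{iwasawa-isom-eq}, $\Lambda\simeq\cO_\p[\![X]\!]$ is a complete noetherian local ring (and in particular a topological commutative noetherian ring), and $I_n=(\gamma_\infty^{p^n}-1)$ is principal, hence finitely generated. Since $M$ is compact and carries a continuous $\Lambda$-action, its Pontryagin dual $M^\vee$ inherits a continuous $\Lambda$-module structure.

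Next, I would apply the second equality in \eqref{app-pontr} to the continuous $\Lambda$-module $M^\vee$; as $\Gamma_n$ is topologically generated by $\gamma_\infty^{p^n}$, this at once yields the left-hand equality $(M^\vee)_{\Gamma_n}=M^\vee/I_nM^\vee$. For the right-hand isomorphism, I would invoke Lemma \ref{C1} with $R=\Lambda$ and $I=I_n$ applied to the compact module $M$, obtaining a canonical $\Lambda$-module isomorphism $M^\vee/I_nM^\vee\simeq M[I_n]^\vee$; finally, the first equality in \eqref{app-pontr} applied to $M$ itself lets me rewrite $M[I_n]=M^{\Gamma_n}$. Concatenating these three identifications produces the desired chain $(M^\vee)_{\Gamma_n}=M^\vee/I_nM^\vee\simeq(M^{\Gamma_n})^\vee$.

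I do not expect any genuine obstacle here, since the core compactness-duality step has already been absorbed into Lemma \ref{C1}. The only point that might deserve a brief mention is that one could a priori fear having to replace $M^\vee/I_nM^\vee$ by a quotient by the topological closure of $I_nM^\vee$ when forming $\Gamma_n$-coinvariants; however, $M^\vee$ is discrete (being the Pontryagin dual of the compact module $M$), so every submodule is automatically closed and no such subtlety arises.
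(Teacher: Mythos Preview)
Your proposal is correct and follows essentially the same approach as the paper: the left equality comes from the second identity in \eqref{app-pontr} applied to $M^\vee$, and the right isomorphism follows by invoking Lemma \ref{C1} with $I=I_n$ and then using the first identity in \eqref{app-pontr}. Your additional remarks on verifying the hypotheses of Lemma \ref{C1} and on the discreteness of $M^\vee$ are fine but not strictly needed.
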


\begin{proof} The equality on the left is just the second equality in \eqref{app-pontr} applied to $M^\vee$, while the canonical isomorphism on the right follows upon taking $I=I_n$ in Lemma \ref{C1} and using the first equality in \eqref{app-pontr}. \end{proof}

\subsection{The Control Theorem} 

Fix a normalized newform $f$ of weight $k$, a prime $\p$ and an imaginary quadratic field $K$ as in Assumption \ref{ass}. 

Write $T$ for the $G_\Q$-representation considered by Nekov\'a\v{r} in \cite[Proposition 3.1]{Nek}, where it is denoted by $A_\p$. This is a free $\cO_\p$-module of rank $2$. The $G_\Q$-representation $V:=T\otimes_{\cO_\p}F_\p$ is then the $k/2$-twist of the representation $V_{f,\p}$. Finally, define the $G_\Q$-representation $A:=V/T$. As above, we shall write $A_{p^n}$ for the $p^n$-torsion submodule of $A$. Observe that
\begin{equation} \label{A-union-eq}
A=\bigcup_{n\geq1}A_{p^n}=\dirlim_nA_{p^n}
\end{equation}
where the direct limit is taken with respect to the natural inclusions $A_{p^n}\hookrightarrow A_{p^{n+1}}$.

\begin{lemma} \label{vanishing-0-lemma}
$H^0(K_m,A)=0$ for all integers $m\geq0$.
\end{lemma}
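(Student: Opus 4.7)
The plan is to reduce in three stages: from $A$ to its $p$-torsion submodule $A_p$, from $K_m$ down to $K$, and then to invoke Assumption \ref{ass}(1). Since $A=V/T$ is a divisible $p$-primary $\cO_\p$-module, any nonzero element of $A^{G_{K_m}}$ has finite $p$-power order, and multiplying it by a suitable power of $p$ produces a nonzero element of $A_p^{G_{K_m}}$. It therefore suffices to show that $A_p(K_m)=0$.

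For the second reduction, observe that $G_{K_m}$ is normal in $G_K$, so $A_p^{G_{K_m}}$ is stable under $G_m=\Gal(K_m/K)$, a cyclic group of order $p^m$. A finite $p$-group acting on a nonzero finite abelian $p$-group necessarily has nonzero fixed points (by the orbit equation), hence the non-vanishing of $A_p^{G_{K_m}}$ would force $A_p^{G_K}=\bigl(A_p^{G_{K_m}}\bigr)^{G_m}\neq 0$. Contrapositively, the problem reduces to proving $A_p(K)=0$.

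For this last step I would appeal to the non-exceptional hypothesis of Assumption \ref{ass}(1): according to \cite[Definition 6.1]{BesDM}, this guarantees that the image of the residual representation $\bar\rho\colon G_\Q\to\Aut(A_p)$ contains a conjugate of $\SL_2$ over the residue field of $\cO_\p$. By Assumption \ref{ass}(2) one has $p\geq 5$, so the corresponding projective group is nonabelian simple and contains no subgroup of index $\leq 2$. Since $[G_\Q:G_K]=2$, this forces $\bar\rho(G_K)$ to contain the same copy of $\SL_2$, which in turn acts on $A_p$ without nonzero fixed vectors. Hence $A_p(K)=0$, which finishes the chain of reductions.

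The main obstacle I foresee is simply confirming that the non-exceptional condition of Besser--de Jeu--Mesch delivers this image-theoretic consequence in the precise form needed; once that input is in hand, everything else is a formal chase using divisibility of $A$ together with the classical fact that finite $p$-groups have nonzero fixed vectors on nonzero $\mathbb{F}_p$-vector spaces.
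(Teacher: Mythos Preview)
Your argument is correct. The paper's proof takes a more packaged route: it cites \cite[Lemma 3.10, (2)]{LV}, which gives $A_{p^n}(L)=0$ for every solvable extension $L/\Q$, applies this with $L=K_m$ (solvable since $K_m/K$ and $K/\Q$ are abelian), and passes to the direct limit $A=\dirlim_n A_{p^n}$. Your approach unpacks the same underlying mechanism---the large residual image survives restriction---by hand: reducing first from $A$ to $A_p$ via divisibility, then from $K_m$ to $K$ via the $p$-group fixed-point lemma, and finally from $K$ to $\Q$ using the index-$2$ step. The paper's version is shorter but outsources the key step; yours is self-contained.

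One small tightening: to conclude that $\bar\rho(G_K)$ contains the given copy $S$ of $\SL_2$, you need $S$ itself to have no index-$2$ subgroup, and simplicity of the projective quotient alone does not exclude a splitting of the central extension $1\to\{\pm I\}\to\SL_2\to\SL_2/\{\pm I\}\to1$. The direct fix is that $\SL_2(\mathbb F_q)$ is perfect for $q\geq4$, hence admits no surjection onto $\Z/2\Z$ and therefore no index-$2$ subgroup. Also, what the non-exceptional hypothesis actually delivers (via \cite[Lemma 6.2]{BesDM}) is a conjugate of $\GL_2(\Z/p\Z)$ inside the image of $\bar\rho$, rather than $\SL_2$ over the residue field of $\cO_\p$; since the former contains $\SL_2(\mathbb F_p)$, your argument is unaffected.
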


\begin{proof} Fix an integer $m\geq0$. The extension $K_m/\Q$ is solvable, so $H^0(K_m,A_{p^n})=0$ for all $n\geq0$ by \cite[Lemma 3.10, (2)]{LV}. It follows from \eqref{A-union-eq} that
\[ H^0(K_m,A)=H^0\Big(K_m,\dirlim_nA_{p^n}\Big)=\dirlim_nH^0(K_m,A_{p^n})=0, \]
as was to be shown. \end{proof}

%Since $V_{f,\p}$ is ordinary, it satisfies Panchishkin's condition as a representation of 
%$G_{\Q_p}:=\Gal(\bar\Q_p/\Q_p)$ (cf. \cite[Def. 2.2 and Rem. 2.3]{Ochiai}) 
%and therefore, being a twist of $V_{f,\p}$, the $G_{\Q_p}$-representation $V$ also satisfies Panchishkin's 
%condition (cf. \cite[Rem. 2.3]{Ochiai} again).
%So we deduce the existence of a filtration 
%of $G_{\Q_p}$-modules 
%\[0\longrightarrow T^+\longrightarrow T\longrightarrow T^-\longrightarrow 0.\]
%Further, if 
%$T^*:=\Hom_{\Z_p}(T,\Z_p)$, then $T^*(1)$ also satisfies Panchichkin's condition with filtration 
%\[0\longrightarrow (T^-)^*(1)\longrightarrow T^*(1)\longrightarrow (T^+)^*(1)\longrightarrow 0\]
%where $(T^\pm)^*:=\Hom_{\Z_p}(T^\pm,\Z_p)$ 
%(cf. \cite[Rem. 2.3]{Ochiai} again). 
%Let $V^*:=\Hom_{\Q_p}(V,\Q_p)$ be the $\Q_p$-linear dual of $V$. Then 
%$V^*$ also satisfies Panchichkin's condition (\cite[Rem. 2.3]{Ochiai}). 

To obtain a version of Mazur's Control Theorem (\cite{Maz1}) in our setting, we make the following 

\begin{assumption} \label{ass1}
\begin{enumerate} 
\item For all integers $m\geq1$ and all places $v$ of $K_m$ dividing $N$, the group $H^0(I_{G_{m,v}}, A)$ is $p$-divisible. 
\item For all $m$ and all places $v$ of $K_m$ above $p$, 
%$H^0(G_{K_{m,v}}, (T^+)^*(1))=0$ and $H^0(G_{K_{m,v}},T^-)=0$. 
the canonical projection 
\[ \invlim_n H^1_f\bigl(K_{n,v},T^*(1)\bigr)\longrightarrow H^1_f\bigl(K_{m,v},T^*(1)\bigr) \] 
is surjective, where $T^*:=\Hom_{\Z_p}(T,\Z_p)$ is the $\Z_p$-linear dual of 
$T$ and the inverse limit is taken with respect to the corestriction maps.  
\end{enumerate} 
\end{assumption}

\begin{remark}
Condition (2) in Assumption \ref{ass1} is standard in the case of elliptic curves over $\Q$ (see, e.g., \cite[Assumption 2.15]{BD2} and \cite[\S 1.1]{BD1}) and is satisfied for all but finitely many
primes $p$. Also, conditions similar to (1) in Assumption \ref{ass1} are discussed in \cite[Assumption 2.1]{BD-IMC} in the case of elliptic curves $E$. It would be interesting to investigate analogous results for the representation $V_{f,\p}$. 
\end{remark} 

Define the discrete $\Lambda$-module 
\[ H^1_f(K_\infty,{A}):=\dirlim_mH^1_f(K_m,{A}), \] 
the injective limit being taken with respect to the restriction maps. 

\begin{theorem}[Control Theorem] \label{CT}
For every integer $m\geq0$ there is an isomorphism
\[ \res_{K_\infty/K_m}:H^1_f(K_m,{A})\overset\simeq\longrightarrow H^1_f(K_\infty,{A})^{\Gamma_m}. \] 
\end{theorem}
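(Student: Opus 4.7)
The plan is a standard Mazur-style control argument: use inflation-restriction at the level of global $H^1$ and then propagate to Selmer groups via a diagram chase, the crux being injectivity of a local restriction map at each place.

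First, since $H^0(K_m, A) = 0$ for every $m \geq 0$ by Lemma \ref{vanishing-0-lemma}, passage to the direct limit yields $H^0(K_\infty, A) = 0$. The inflation-restriction sequence
\[ 0 \longrightarrow H^1\bigl(\Gamma_m, H^0(K_\infty, A)\bigr) \longrightarrow H^1(K_m, A) \xrightarrow{\res} H^1(K_\infty, A)^{\Gamma_m} \longrightarrow H^2\bigl(\Gamma_m, H^0(K_\infty, A)\bigr) \]
then collapses to a canonical isomorphism $H^1(K_m, A) \xrightarrow{\sim} H^1(K_\infty, A)^{\Gamma_m}$.

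Next I would place this inside the commutative diagram with left-exact rows
\[
\begin{array}{ccccccc}
0 & \longrightarrow & H^1_f(K_m, A) & \longrightarrow & H^1(K_m, A) & \longrightarrow & \prod_v H^1_s(K_{m,v}, A) \\
& & \downarrow & & \downarrow & & \downarrow \\
0 & \longrightarrow & H^1_f(K_\infty, A)^{\Gamma_m} & \longrightarrow & H^1(K_\infty, A)^{\Gamma_m} & \longrightarrow & \Bigl(\prod_w H^1_s(K_{\infty,w}, A)\Bigr)^{\Gamma_m}
\end{array}
\]
in which the middle vertical is the isomorphism just established. Injectivity of the left vertical then follows at once from injectivity of the middle. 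For surjectivity, given $c \in H^1_f(K_\infty, A)^{\Gamma_m}$, lift it uniquely to $c' \in H^1(K_m, A)$; by commutativity, its local component $\partial_v(c')$ lies in the kernel of $H^1_s(K_{m,v}, A) \to \bigoplus_{w \mid v} H^1_s(K_{\infty,w}, A)$. Hence it suffices to prove that this local restriction is injective at every place $v$ of $K_m$.

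I would handle the places in three families. At primes $v \nmid Np$, both $A$ and the extension $K_{\infty,w}/K_{m,v}$ are unramified (the anticyclotomic $\Z_p$-extension is ramified only above $p$), and injectivity follows by direct Frobenius-eigenvalue bookkeeping on the unramified cohomology, using the non-exceptionality hypothesis to rule out fixed vectors. At primes $v \mid N$, still $K_{\infty,w}/K_{m,v}$ is unramified while $A$ may be ramified; here Assumption \ref{ass1}(1) on the $p$-divisibility of $H^0(I_v, A)$ forces the kernel of the local $H^1$-restriction to lie inside $H^1_f$ and hence vanish in $H^1_s$. At primes $v \mid p$, local Tate duality pairs $H^1_s(K_{m,v}, A)$ with $H^1_f(K_{m,v}, T^*(1))$, and dualizes the surjectivity of the inverse-limit corestriction
\[ \invlim_n H^1_f\bigl(K_{n,v}, T^*(1)\bigr) \twoheadrightarrow H^1_f\bigl(K_{m,v}, T^*(1)\bigr) \]
provided by Assumption \ref{ass1}(2) into the required injectivity for $A$.

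The main obstacle will be the analysis at primes above $p$: one must align the Bloch--Kato local duality between $H^1_f$ and $H^1_s$ for $A$ and $T^*(1)$ with the inverse/direct limit transition along $K_\infty/K_m$, and verify that the dual of Assumption \ref{ass1}(2) is exactly the local injectivity needed. Once this duality bookkeeping is in place, the remaining two cases are essentially formal.
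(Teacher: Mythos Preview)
Your approach is essentially the paper's: the authors prove injectivity exactly as you do via inflation--restriction and Lemma~\ref{vanishing-0-lemma}, and for surjectivity they simply defer to \cite[Theorem 2.4]{Ochiai}, whose proof is the local case analysis you have outlined (and for which Assumption~\ref{ass1} is tailored). One small remark: at places $v\nmid Np$ you do not need the non-exceptionality hypothesis; since $K_{\infty,w}/K_{m,v}$ is unramified, the kernel of restriction on the full $H^1$ already consists of unramified classes, hence lies in $H^1_f$ and dies in $H^1_s$, with no condition on Frobenius eigenvalues required.
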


\begin{proof}[Sketch of proof] This is essentially \cite[Theorem 2.4]{Ochiai}. Fix an integer $m\geq0$. To begin with, the inflation-restriction exact sequence reads
\begin{equation} \label{inf-res-control-eq}
0\longrightarrow H^1\bigl(\Gal(K_\infty/K_m),H^0(K_\infty,A)\bigr)\longrightarrow H^1(K_m,A)\longrightarrow H^1(K_\infty,A)^{\Gamma_m}.
\end{equation}
On the other hand, $H^0(K_m,A)=0$ for all $m$ by Lemma \ref{vanishing-0-lemma}, hence
\[ H^0(K_\infty,A)=\dirlim_mH^0(K_m,A)=0. \]
Thus sequence \eqref{inf-res-control-eq} gives an injection $H^1(K_m,A)\hookrightarrow H^1(K_\infty,A)^{\Gamma_m}$, which in turn restricts to an injection
\[ \res_{K_\infty/K_m}:H^1_f(K_m,{A})\longmono H^1_f(K_\infty,{A})^{\Gamma_m} \]
between Selmer groups. Finally, by comparing with the proof of \cite[Theorem 2.4]{Ochiai} (see, in particular, \cite[p. 81]{Ochiai}), one can check that Assumption \ref{ass1} forces $\res_{K_\infty/K_m}$ to be surjective as well. \end{proof}
 
Let 
\[ \mathcal X_\infty:=\Hom_{\cO_\p}^{\rm cont}\bigl(H^1_f(K_\infty,{A}),F_\p/\cO_\p\bigr) \]
be the Pontryagin dual of $H^1_f(K_\infty,{A})$, equipped with its canonical structure of compact 
$\Lambda$-module. For every integer $m\geq0$ let 
\[ \mathcal X_m:=\Hom_{\mathcal O_\p}^{\rm cont}\bigl(H^1_f(K_m,{A}),F_\p/\mathcal O_\p\bigr) \]
be the Pontryagin dual of $H^1_f(K_m,A)$. Each $\mathcal X_m$ has a natural structure of $\Lambda_m$-module and there is a canonical isomorphism of $\Lambda$-modules $\mathcal X_\infty\simeq\sideset{}{_m}\invlim\mathcal X_m$. Note that, since the Galois representation $A$ is unramified outside $Np$, the $\cO_\p$-modules $\mathcal X_m$ are finitely generated.

\begin{corollary} \label{coro2.2} 
For every $m\geq0$ there is a canonical isomorphism ${(\mathcal X_\infty)}_{\Gamma_m}\simeq \mathcal X_m$. 
\end{corollary}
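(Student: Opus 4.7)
The plan is to derive this corollary directly from the Control Theorem \ref{CT} and the duality between invariants and coinvariants encoded in Lemma \ref{C1} and Proposition \ref{C2}. The Control Theorem yields a canonical $\Lambda_m$-equivariant isomorphism $H^1_f(K_m,A)\simeq H^1_f(K_\infty,A)^{\Gamma_m}$ of discrete modules. Since Pontryagin duality is exact on the category of locally compact Hausdorff abelian groups, applying $(\cdot)^\vee$ produces a canonical isomorphism
\[ \mathcal X_m \;\simeq\; \bigl(H^1_f(K_\infty,A)^{\Gamma_m}\bigr)^\vee \]
of compact $\Lambda_m$-modules, and the task reduces to identifying the right-hand side with $(\mathcal X_\infty)_{\Gamma_m}$.

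To carry out this identification, I would invoke Lemma \ref{C1} in its discrete incarnation: the proof given there extends verbatim when $M$ is discrete rather than compact, because every submodule of a discrete module is automatically closed and the exactness of Pontryagin duality is symmetric with respect to the compact/discrete interchange. Applied to $N:=H^1_f(K_\infty,A)$ and $I:=I_m=(\gamma_\infty^{p^m}-1)$, together with the identity $N[I_m]=N^{\Gamma_m}$ recorded in \eqref{app-pontr}, this gives
\[ \bigl(H^1_f(K_\infty,A)^{\Gamma_m}\bigr)^\vee \;\simeq\; \mathcal X_\infty/I_m\mathcal X_\infty \;=\; (\mathcal X_\infty)_{\Gamma_m}, \]
where the $\Lambda$-action on $\mathcal X_\infty$ is the one introduced in \S \ref{torsion-subsec}. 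Concatenating the two isomorphisms yields the desired canonical identification $\mathcal X_m\simeq(\mathcal X_\infty)_{\Gamma_m}$.

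There is essentially no obstacle: once the Control Theorem is granted, the argument is purely formal. The only point worth a moment's verification is that multiplication by $\gamma_\infty^{p^m}-1$ on $\mathcal X_\infty$ is genuinely the Pontryagin dual of multiplication by the same element on $H^1_f(K_\infty,A)$, so that dualizing the four-term exact sequence
\[ 0\longrightarrow H^1_f(K_\infty,A)^{\Gamma_m}\longrightarrow H^1_f(K_\infty,A)\longrightarrow H^1_f(K_\infty,A)\longrightarrow H^1_f(K_\infty,A)_{\Gamma_m}\longrightarrow 0 \]
indeed produces $(\mathcal X_\infty)_{\Gamma_m}$ as the cokernel on the dual side; this is immediate from the definition $(x\varphi)(n):=\varphi(xn)$ together with the commutativity of $\Lambda$.
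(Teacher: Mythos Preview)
Your proof is correct and follows essentially the same route as the paper: dualize the Control Theorem and then use the invariants/coinvariants duality of Lemma \ref{C1}/Proposition \ref{C2} to identify $\bigl(H^1_f(K_\infty,A)^{\Gamma_m}\bigr)^\vee$ with $\mathcal X_\infty/I_m\mathcal X_\infty=(\mathcal X_\infty)_{\Gamma_m}$. If anything, you are slightly more careful than the paper in noting explicitly that Lemma \ref{C1} must be invoked for the \emph{discrete} module $H^1_f(K_\infty,A)$ rather than a compact one, and that the proof carries over verbatim in that case; the paper simply cites Proposition \ref{C2} without flagging this point.
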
 

\begin{proof} Fix an integer $m\geq0$. Thanks to Proposition \ref{C2}, the isomorphism of Theorem \ref{CT} gives, by duality, a canonical isomorphism 
\begin{equation} \label{coro-CT}
\mathcal X_\infty/I_m\mathcal X_\infty\simeq\mathcal X_m.
\end{equation} 
But, again by Proposition \ref{C2}, the quotient $\mathcal X_\infty/I_m\mathcal X_\infty$ is canonically isomorphic to ${(\mathcal X_\infty)}_{\Gamma_m}$, and we are done. \end{proof} 

\begin{corollary} \label{coro-fin-gen}
The $\Lambda$-module $\mathcal X_\infty$ is finitely generated. 
\end{corollary}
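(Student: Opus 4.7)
The plan is to apply the topological version of Nakayama's lemma to the compact $\Lambda$-module $\mathcal X_\infty$. Under the isomorphism \eqref{iwasawa-isom-eq}, $\Lambda\simeq\cO_\p[\![X]\!]$ is a complete local noetherian ring with maximal ideal $\mathfrak m=(\pi,X)$, where $\pi$ is a uniformizer of $\cO_\p$ and $X$ corresponds to $\gamma_\infty-1$. Note that $\mathcal X_\infty$ is naturally a compact $\Lambda$-module, being the Pontryagin dual of the discrete module $H^1_f(K_\infty,A)$.

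First I would specialize Corollary \ref{coro2.2} to $m=0$, which gives a canonical isomorphism
\[ \mathcal X_\infty/X\mathcal X_\infty=\mathcal X_\infty/I_0\mathcal X_\infty=(\mathcal X_\infty)_{\Gamma_0}\simeq\mathcal X_0. \]
Since $A$ is unramified outside $Np$, the remark preceding the statement of Corollary \ref{coro-fin-gen} tells us that $\mathcal X_0$ is finitely generated over $\cO_\p$. Reducing modulo $\pi$ then yields
\[ \mathcal X_\infty/\mathfrak m\mathcal X_\infty\simeq\mathcal X_0/\pi\mathcal X_0, \]
which is a finite-dimensional vector space over the residue field $\cO_\p/\pi\cO_\p$.

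To conclude, I would invoke the topological Nakayama lemma for compact modules over a complete local noetherian ring (see, e.g., \cite[Corollary 5.2.18]{NSW}): if $M$ is a compact $\Lambda$-module whose quotient $M/\mathfrak m M$ is finitely generated over $\Lambda/\mathfrak m$, then any lift of a set of generators of $M/\mathfrak m M$ generates $M$ as a $\Lambda$-module. Applying this to $\mathcal X_\infty$ with generators lifted from a basis of $\mathcal X_\infty/\mathfrak m\mathcal X_\infty$ produces a finite generating set, proving the corollary.

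The main point to verify carefully is that $\mathcal X_\infty$ is genuinely compact in a topology compatible with its $\Lambda$-module structure, so that topological Nakayama applies; this follows from its description as $\varprojlim_m\mathcal X_m$ of finite $\cO_\p$-modules (once one also checks, via Corollary \ref{coro2.2}, that each $\mathcal X_m$ is finitely generated over $\cO_\p$ — a fact inherited iteratively from $\mathcal X_0$ together with $(\mathcal X_\infty)_{\Gamma_m}\simeq\mathcal X_m$). Once compactness is in place, the argument is essentially formal.
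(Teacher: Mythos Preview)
Your proof is correct and follows essentially the same route as the paper: specialize the control-theorem isomorphism $(\mathcal X_\infty)_{\Gamma_0}\simeq\mathcal X_0$ to see that $\mathcal X_\infty/I_\infty\mathcal X_\infty$ is finitely generated over $\cO_\p$, then apply the topological Nakayama lemma (\cite[Corollary 5.2.18]{NSW}). One small slip: in your last paragraph the $\mathcal X_m$ are finitely generated over $\cO_\p$ but not finite in general, though this does not affect compactness or the argument.
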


\begin{proof} By choosing $m=0$ in  \eqref{coro-CT}, we obtain an isomorphism $\mathcal X_\infty/I_\infty\mathcal X_\infty\simeq\mathcal X_0$. Since $\mathcal X_0$ is a finitely generated $\mathcal O_\p$-module, the result follows from a topological version of Nakayama's lemma (\cite[Corollary 1.5]{Bru} or \cite[Corollary 5.2.18, (ii)]{NSW}). \end{proof}

\subsection{Projective Selmer modules}

For every integer $m\geq0$ define the $\mathcal O_\p$-module  
\[ S_m:=\invlim_n H^1_f(K_m,A_{p^n}) \]
where the inverse limit is taken with respect to the multiplication-by-$p$ maps. The Tate module of $H^1_f(K_m,A)$ is the $\mathcal O_\p$-module 
\[ \Ta_p\bigl(H^1_f(K_m,A)\bigr):=\invlim_n H^1_f(K_m,A{)}_{p^n} \] 
where $H^1_f(K_m,A{)}_{p^n}$ denotes the $p^n$-torsion submodule of $H^1_f(K_m,A)$ and, again, the inverse limit is taken with respect to the multiplication-by-$p$ maps. 

\begin{lemma} \label{isom-selmer-lemma}
For all $m,n\geq0$ there is an isomorphism 
\[ H^1_f(K_m,A_{p^n})\simeq H^1_f(K_m,A{)}_{p^n}. \]
\end{lemma}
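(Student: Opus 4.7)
The plan is to build the required isomorphism from the multiplication-by-$p^n$ Kummer sequence and then to match the Selmer conditions through a diagram chase, using the very definition of $H^1_f(K_m,A_{p^n})$ recalled in Section 2.1.

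First I would consider the short exact sequence of $G_{K_m}$-modules
\[ 0\longrightarrow A_{p^n}\longrightarrow A\overset{p^n}\longrightarrow A\longrightarrow0, \]
noting that multiplication by $p^n$ is surjective on $A=V/T$ because it is surjective on the $F_\p$-vector space $V$ and on the lattice $T$. Taking continuous $G_{K_m}$-cohomology produces the long exact sequence
\[ H^0(K_m,A)\overset{p^n}\longrightarrow H^0(K_m,A)\longrightarrow H^1(K_m,A_{p^n})\overset{i_n}\longrightarrow H^1(K_m,A)\overset{p^n}\longrightarrow H^1(K_m,A). \]
By Lemma \ref{vanishing-0-lemma}, $H^0(K_m,A)=0$, so $i_n$ is injective with image equal to $H^1(K_m,A)_{p^n}$. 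This gives a canonical isomorphism
\[ \alpha_{m,n}:H^1(K_m,A_{p^n})\overset\simeq\longrightarrow H^1(K_m,A)_{p^n}. \]

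Next I would check that $\alpha_{m,n}$ carries $H^1_f(K_m,A_{p^n})$ onto $H^1_f(K_m,A)_{p^n}$. By the definition recalled in \S\ref{sec-Bloch-Kato}, $H^1_f(K_m,A_{p^n})=i_n^{-1}\bigl(H^1_f(K_m,A)\bigr)$. Since $i_n$ is injective, applying $i_n$ to both sides yields
\[ i_n\bigl(H^1_f(K_m,A_{p^n})\bigr)=H^1_f(K_m,A)\cap{\rm im}(i_n)=H^1_f(K_m,A)\cap H^1(K_m,A)_{p^n}=H^1_f(K_m,A)_{p^n}, \]
so $\alpha_{m,n}$ restricts to the desired isomorphism.

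The only potential obstacle is the first step: without the vanishing of $H^0(K_m,A)$, one would only obtain a short exact sequence with kernel $H^0(K_m,A)/p^n$, and the identification between the mod-$p^n$ Selmer group and the $p^n$-torsion in $H^1_f$ would fail in general. Here this is taken care of by Lemma \ref{vanishing-0-lemma}, so no further work is required.
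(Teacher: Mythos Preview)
Your proof is correct and follows essentially the same route as the paper: take $G_{K_m}$-cohomology of the multiplication-by-$p^n$ sequence on $A$, invoke Lemma \ref{vanishing-0-lemma} to obtain $H^1(K_m,A_{p^n})\simeq H^1(K_m,A)_{p^n}$, and then observe that this restricts to Selmer groups. Your write-up is in fact more detailed than the paper's, spelling out why multiplication by $p^n$ is surjective on $A$ and making explicit the use of the definition $H^1_f(K_m,A_{p^n})=i_n^{-1}\bigl(H^1_f(K_m,A)\bigr)$.
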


\begin{proof} Fix integers $m,n\geq0$. Taking $G_{K_m}$-cohomology of the short exact sequence 
\[ 0\longrightarrow A_{p^n}\longrightarrow A\overset{p^n}\longrightarrow A\longrightarrow0, \]
where $p^n$ denotes the multiplication-by-$p^n$ map, and using Lemma \ref{vanishing-0-lemma}, we obtain isomorphisms 
\[ H^1(K_m,A_{p^n})\simeq H^1(K_m,A{)}_{p^n} \] 
for all $n\geq0$. These restrict to isomorphisms
\[ H^1_f(K_m,A_{p^n})\simeq H^1_f(K_m,A{)}_{p^n}, \] 
as desired. \end{proof}

\begin{proposition} \label{isom-tate-prop}
For every $m\geq0$ there is a canonical isomorphism 
\[ S_m\simeq\Ta_p\bigl(H^1_f(K_m,A)\bigr). \]
\end{proposition}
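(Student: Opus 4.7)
The plan is to take the inverse limit of the isomorphisms provided by Lemma \ref{isom-selmer-lemma}. Fix $m\geq0$. For every $n\geq0$, Lemma \ref{isom-selmer-lemma} gives a canonical isomorphism
\[ \alpha_n:H^1_f(K_m,A_{p^n})\overset\simeq\longrightarrow H^1_f(K_m,A)_{p^n} \]
induced by the inclusion $\iota_n:A_{p^n}\hookrightarrow A$, via the identification coming from the long exact sequence attached to $0\to A_{p^n}\to A\xrightarrow{p^n}A\to 0$ (this works because $H^0(K_m,A)=0$ by Lemma \ref{vanishing-0-lemma}). Since the inverse limit functor sends isomorphisms of inverse systems to isomorphisms, it suffices to check that the $\alpha_n$ assemble into a morphism of the two inverse systems in the statement.

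The main (only) step is therefore to verify compatibility with the transition maps. On the left the transition map $H^1_f(K_m,A_{p^{n+1}})\to H^1_f(K_m,A_{p^n})$ is induced by the multiplication-by-$p$ map $A_{p^{n+1}}\to A_{p^n}$; on the right it is multiplication by $p$ on $H^1_f(K_m,A)$. The commutative square
\[ \begin{CD}
A_{p^{n+1}} @>\iota_{n+1}>> A \\
@VpVV @VpVV \\
A_{p^n} @>\iota_n>> A
\end{CD} \]
(both vertical maps being $x\mapsto px$) induces, upon taking $H^1(K_m,-)$ and restricting to the relevant Selmer subgroups, a commutative diagram whose horizontal arrows are precisely $\alpha_{n+1}$ and $\alpha_n$. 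Hence $\{\alpha_n\}_{n\geq0}$ is an isomorphism of inverse systems.

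Passing to the inverse limit yields the desired canonical isomorphism
\[ S_m=\invlim_nH^1_f(K_m,A_{p^n})\overset\simeq\longrightarrow\invlim_nH^1_f(K_m,A)_{p^n}=\Ta_p\bigl(H^1_f(K_m,A)\bigr), \]
which is canonical because each $\alpha_n$ is. I do not expect any serious obstacle here: the argument is a formal consequence of Lemma \ref{isom-selmer-lemma} together with the observation that the inclusions $\iota_n$ are visibly natural in $n$ with respect to multiplication by $p$. The only point that deserves a moment's care is the compatibility of the transition maps on both sides, and this is handled by the square displayed above.
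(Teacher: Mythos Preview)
Your proof is correct and follows exactly the same approach as the paper: both simply pass to the inverse limit over $n$ of the isomorphisms in Lemma \ref{isom-selmer-lemma} with respect to the multiplication-by-$p$ maps. Your write-up is more detailed in checking the compatibility of transition maps, but this is the only content, and the paper's one-line proof takes it for granted.
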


\begin{proof} Fix an integer $m\geq0$. Passing to inverse limits over $n$ in Lemma \ref{isom-selmer-lemma} with respect to the multiplication-by-$p$ maps gives the result. \end{proof}

We need one more auxiliary result on Tate modules.

\begin{lemma} \label{tate-isom-lemma}
For every $m\geq0$ there is a canonical isomorphism
\[ \Ta_p\bigl(H^1_f(K_m,A)\bigr)\simeq\Hom_{\cO_\p}(\mathcal X_m,\cO_\p). \] 
\end{lemma}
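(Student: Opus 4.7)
The claim is a general fact about Pontryagin duality for cofinitely generated $\cO_\p$-modules, which we then apply to $M := H^1_f(K_m,A)$. Note that $M^\vee = \mathcal X_m$, and $\mathcal X_m$ is finitely generated over $\cO_\p$ by the remark following Corollary \ref{coro-fin-gen}. The strategy is to compute $\Hom_{\cO_\p}(\mathcal X_m,\cO_\p)$ by writing $\cO_\p$ as an inverse limit of its quotients $\cO_\p/p^n\cO_\p$ and identifying these with the $p^n$-torsion of $F_\p/\cO_\p$, so that Pontryagin double duality recovers $M$ itself.

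Concretely, the plan is the following. First, since $\cO_\p$ is $p$-adically complete we have $\cO_\p = \invlim_n \cO_\p/p^n\cO_\p$, and since $\Hom_{\cO_\p}(\mathcal X_m,-)$ is left exact and commutes with arbitrary inverse limits,
\[ \Hom_{\cO_\p}(\mathcal X_m,\cO_\p) \;\simeq\; \invlim_n\Hom_{\cO_\p}(\mathcal X_m,\cO_\p/p^n\cO_\p). \]
Next, identify $\cO_\p/p^n\cO_\p$ with the $p^n$-torsion subgroup $(F_\p/\cO_\p)[p^n]$ of $F_\p/\cO_\p$. Since $\mathcal X_m$ is finitely generated over $\cO_\p$, every $\cO_\p$-linear homomorphism from $\mathcal X_m$ to the discrete module $F_\p/\cO_\p$ is automatically continuous, so
\[ \Hom_{\cO_\p}\!\bigl(\mathcal X_m,(F_\p/\cO_\p)[p^n]\bigr) \;=\; \Hom_{\cO_\p}^{\mathrm{cont}}(\mathcal X_m,F_\p/\cO_\p)[p^n]. \]

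Now apply Pontryagin double duality: the discrete $\cO_\p$-module $H^1_f(K_m,A)$ has dual $\mathcal X_m$ that is finitely generated over $\cO_\p$, hence of cofinite type in the appropriate sense, and the canonical biduality map
\[ H^1_f(K_m,A)\;\overset{\simeq}{\longrightarrow}\;\Hom_{\cO_\p}^{\mathrm{cont}}(\mathcal X_m,F_\p/\cO_\p) \]
is an isomorphism (see, e.g., \cite[Lemma 2.4]{Bru} for the equivalence of $\cO_\p$- and $\Z_p$-versions of the Pontryagin dual). Combining the preceding identifications yields
\[ \Hom_{\cO_\p}(\mathcal X_m,\cO_\p/p^n\cO_\p) \;\simeq\; H^1_f(K_m,A)_{p^n}, \]
and passing to the inverse limit over $n$ with respect to the multiplication-by-$p$ maps produces exactly $\Ta_p\bigl(H^1_f(K_m,A)\bigr)$, which is the sought isomorphism.

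I do not anticipate a serious obstacle: the only delicate points are (i) ensuring that the inverse limit definition of $\Ta_p$ matches the one obtained by passing to $p^n$-torsion of the double dual, and (ii) verifying that no topological subtleties intervene when identifying continuous and algebraic Hom, both of which follow from the finite generation of $\mathcal X_m$ over the complete DVR $\cO_\p$ and the discreteness of the target $F_\p/\cO_\p$. The canonicity of the isomorphism is built into the construction, since each step is functorial in $\mathcal X_m$.
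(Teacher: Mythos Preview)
Your proof is correct and rests on the same idea as the paper's, namely Pontryagin duality between $M=H^1_f(K_m,A)$ and $\mathcal X_m$. The paper proceeds from the other end: it first identifies $\Ta_p(M)$ with $\Hom_{\cO_\p}(F_\p/\cO_\p,M)$ (using that $F_\p=\cO_\p[1/p]$ since $p$ is unramified in $F$), and then applies Pontryagin duality once to obtain $\Hom_{\cO_\p}(\mathcal X_m,\cO_\p)$, since $(F_\p/\cO_\p)^\vee\simeq\cO_\p$. Your argument instead expresses $\cO_\p$ as $\invlim_n\cO_\p/p^n\cO_\p$, identifies each finite level with $(F_\p/\cO_\p)[p^n]$, and invokes double duality levelwise before passing to the limit; this is the same duality, unwound through the tower. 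One small remark: the finite generation of $\mathcal X_m$ over $\cO_\p$ is stated just after its definition (before Corollary~\ref{coro2.2}), not after Corollary~\ref{coro-fin-gen}, and your identification $\cO_\p/p^n\cO_\p\simeq(F_\p/\cO_\p)[p^n]$ implicitly uses that $p$ is a uniformizer of $\cO_\p$, which is exactly Assumption~\ref{ass}(3) that $p$ is unramified in $F$---the same hypothesis the paper invokes.
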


\begin{proof} Fix an integer $m\geq0$. First of all, $F_\p=\cO_\p[1/p]$ because $p$ is unramified in $F$ by Assumption \ref{ass}, so there is an identification
\begin{equation} \label{tate-hom-eq1}
\Ta_p\bigl(H^1_f(K_m,A)\bigr)=\Hom_{\cO_\p}\bigl(F_\p/\cO_\p,H^1_f(K_m,A)\bigr).
\end{equation} 
On the other hand, taking Pontryagin duals gives a canonical isomorphism
\begin{equation} \label{tate-hom-eq2}
\Hom_{\cO_\p}\bigl(F_\p/\cO_\p,H^1_f(K_m,A)\bigr)\simeq\Hom_{\cO_\p}(\mathcal X_m,\cO_\p).
\end{equation}
The lemma follows by combining \eqref{tate-hom-eq1} and \eqref{tate-hom-eq2}. \end{proof}

Consider the $\Lambda$-modules 
\[ S_\infty:=\invlim_mS_m,\qquad\invlim_m\Ta_p\bigl(H^1_f(K_m,A)\bigr), \] 
the inverse limits being taken with respect to the corestriction maps. There is a sequence of isomorphisms of $\Lambda$-modules 
\[ S_\infty\simeq\invlim_m\Ta_p\bigl(H^1_f(K_m,A)\bigr)\simeq\invlim_m\Hom_{\mathcal O_\p}(\mathcal X_m,\mathcal O_\p)\simeq\invlim_m\Hom_{\mathcal O_\p}\bigl({(\mathcal X_\infty)}_{\Gamma_m},\mathcal O_\p\bigr) \]
where the first isomorphism comes from Proposition \ref{isom-tate-prop}, the second from Lemma \ref{tate-isom-lemma} and the third from Corollary \ref{coro2.2}. Using \cite[Lemma 4, (ii)]{PR}, whose proof can be extended to the case of Iwasawa algebras with coefficients in $\cO_\p$, we obtain canonical isomorphisms  
\begin{equation} \label{selmer-iso1}
S_\infty\simeq\invlim_m\Ta_p\bigl(H^1_f(K_m,A)\bigr)\simeq \Hom_{\Lambda }(\mathcal X_\infty,\Lambda )
\end{equation}
of $\Lambda $-modules. It follows from Corollary \ref{coro-fin-gen} that $S_\infty$ is finitely generated over $\Lambda$. 

\begin{proposition} \label{lemma2.7}
The $\Lambda $-module $S_\infty$ is free of finite rank. 
\end{proposition}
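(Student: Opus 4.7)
The plan is to deduce the proposition directly from the isomorphism
\[ S_\infty\simeq\Hom_\Lambda(\mathcal X_\infty,\Lambda) \]
provided by \eqref{selmer-iso1}, using the fact that $\Lambda$ is a two-dimensional regular local ring. Indeed, by \eqref{iwasawa-isom-eq} one has $\Lambda\simeq\mathcal O_\p[\![X]\!]$, and since $p$ does not ramify in $F$ by Assumption~\ref{ass} the ring $\mathcal O_\p$ is a complete discrete valuation ring, so $\Lambda$ is regular local of Krull dimension~$2$.

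Finite generation of $S_\infty$ over $\Lambda$ is then automatic from Noetherianness of $\Lambda$ together with the finite generation of $\mathcal X_\infty$ proved in Corollary~\ref{coro-fin-gen}. For freeness I would invoke two standard facts of commutative algebra: first, the $R$-dual $\Hom_R(M,R)$ of any finitely generated module over any commutative ring $R$ is reflexive (a formal consequence of the biduality map); second, over a regular local ring of Krull dimension at most~$2$, every finitely generated reflexive module is free. Taking $M=\mathcal X_\infty$ and $R=\Lambda$, the first fact shows that $S_\infty$ is reflexive over $\Lambda$, and then the second yields freeness of finite rank.

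The only nontrivial step is the second commutative-algebra fact, which is classical: the standard argument combines Serre's $S_2$ criterion for reflexivity over normal Noetherian domains with the Auslander--Buchsbaum formula to conclude that the projective dimension of such a module must vanish. Since this is pure commutative algebra with no arithmetic content, I do not foresee any substantial obstacle; the real work of the proposition has already been packaged into the identifications culminating in \eqref{selmer-iso1}.
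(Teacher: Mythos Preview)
Your approach is correct and yields a valid proof, but it is genuinely different from the paper's argument. The paper proceeds more concretely: it shows that the $G_\infty$-coinvariants $(S_\infty)_{G_\infty}$ inject into the free $\mathcal O_\p$-module $\Hom_{\mathcal O_\p}\bigl((\mathcal X_\infty)_{G_\infty},\mathcal O_\p\bigr)$ (via an extension of \cite[Lemma~4,~(iii)]{PR}), hence are $\mathcal O_\p$-free; then, since $S_\infty$ is $\Lambda$-torsion-free, it invokes \cite[Proposition~5.3.19,~(ii)]{NSW} to conclude freeness. This is an Iwasawa-theoretic Nakayama-type argument specific to the tower structure. Your route via reflexivity and Auslander--Buchsbaum is more abstract and applies uniformly to duals over any two-dimensional regular local ring, which is arguably cleaner; the paper's argument, on the other hand, stays closer to the Iwasawa-algebra formalism used elsewhere in the article and avoids importing homological machinery.

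Two small corrections. First, the claim that $\Hom_R(M,R)$ is reflexive for \emph{any} finitely generated $M$ over \emph{any} commutative ring $R$ is false as stated, and it is not merely a formal consequence of biduality: over $R=k[x,y]/(x^2,xy,y^2)$ with $M=k$ one computes $M^*\simeq k^2$ but $M^{***}\simeq k^8$. What is true (and sufficient here) is that over a Noetherian normal domain---in particular over a regular local ring---the dual of a finitely generated module is a second syzygy, hence torsion-free and $S_2$, hence reflexive. You should state the hypothesis accordingly. Second, the appeal to $p$ being unramified in $F$ is unnecessary: $\mathcal O_\p$ is a complete discrete valuation ring regardless, so $\Lambda\simeq\mathcal O_\p[\![X]\!]$ is automatically regular local of dimension $2$.
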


\begin{proof} By \eqref{selmer-iso1}, the $\Lambda$-module ${(S_\infty)}_{G_\infty}$ is isomorphic to $\Hom_{\Lambda }(\mathcal X_\infty,\Lambda )_{G_\infty}$, and this injects into the free $\cO_\p$-module $\Hom_{\cO_\p}\bigl({(\mathcal X_\infty)}_{G_\infty},\cO_\p\bigr)$ thanks to a straightforward extension of \cite[Lemma 4, (iii)]{PR} to our case, where Iwasawa algebras have coefficients in $\cO_\p$. Therefore ${(S_\infty)}_{G_\infty}$ is a free $\cO_\p$-module. Since $S_\infty$ is torsion-free over $\Lambda $ by \eqref{selmer-iso1}, by \cite[Proposition 5.3.19, (ii)]{NSW} (again, generalized to Iwasawa algebras with coefficients in $\cO_\p$) we conclude that $S_\infty$ is free of finite rank over $\Lambda $. \end{proof}

\begin{definition} \label{bloch-kato-dfn}
The \emph{pro-$p$ Bloch--Kato Selmer group of $f$ over $K_\infty$} is the $\Lambda$-module 
\[ \hat H^1_f(K_\infty,{T}):=\invlim_mH^1_f(K_m,{T}), \] 
where the inverse limit is taken with respect to the corestriction maps. 
\end{definition}

The following result is a straightforward consequence of Definition \ref{bloch-kato-dfn}. 

\begin{proposition} \label{prop-selmer}
The $\Lambda$-modules $ \hat H^1_f(K_\infty,{T})$ and $S_\infty$ 
are canonically isomorphic. 
\end{proposition}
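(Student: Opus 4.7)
The plan is to establish a canonical isomorphism $H^1_f(K_m, T) \simeq S_m$ at each finite level $m$, verify that it is functorial (in particular, commutes with corestriction), and then pass to the inverse limit over $m$.

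For the finite-level statement, I would fix $m \geq 0$ and work with the identifications $T = \invlim_n T/p^n T$ and $A_{p^n} \simeq T/p^n T$. Since $T$ is a finitely generated (hence compact) $\cO_\p$-module, the standard compatibility of continuous Galois cohomology with inverse limits of finite coefficient modules gives a canonical isomorphism
\[ H^1(K_m, T) \;\simeq\; \invlim_n H^1(K_m, T/p^n T) \;\simeq\; \invlim_n H^1(K_m, A_{p^n}), \]
where the inverse limits are taken with respect to the maps induced by the natural projections (equivalently, the multiplication-by-$p$ maps under the identification $A_{p^n} \simeq T/p^n T$). I would then check that this isomorphism restricts to an isomorphism on Selmer subgroups
\[ H^1_f(K_m, T) \;\simeq\; \invlim_n H^1_f(K_m, A_{p^n}) \;=\; S_m. \]

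The verification of the Selmer compatibility is the main point. By the definition given in Section~\ref{sec-Bloch-Kato}, $H^1_f(K_m, T/p^nT) = p_n\bigl(H^1_f(K_m, T)\bigr)$, so the transition maps in the inverse system $\bigl\{H^1_f(K_m, T/p^n T)\bigr\}_n$ are surjective. This immediately gives a map $H^1_f(K_m, T) \to S_m$. Injectivity follows from Krull's intersection theorem applied to the finitely generated $\cO_\p$-module $H^1_f(K_m, T)$ (finiteness being a consequence of $T$ being unramified outside $Np$), while surjectivity follows from a Mittag--Leffler argument, exploiting both the compactness of $H^1_f(K_m, T)$ and the surjectivity of the transition maps noted above.

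Finally, since the isomorphism $H^1_f(K_m, T) \simeq S_m$ is induced by the canonical maps $p_n$, it is functorial in $K_m$ and in particular commutes with the corestriction maps. Applying $\invlim_m$ to both sides then yields the desired canonical isomorphism $\hat H^1_f(K_\infty, T) \simeq S_\infty$ of $\Lambda$-modules. The principal obstacle is the Selmer-compatibility issue in the second step; once one observes that the definition $H^1_f(K_m, T/p^nT) := p_n(H^1_f(K_m, T))$ renders the transition maps surjective, however, the remainder of the argument is essentially formal.
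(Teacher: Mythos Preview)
Your proposal is correct and follows essentially the same route as the paper: establish $H^1_f(K_m,T)\simeq\invlim_n H^1_f(K_m,T/p^nT)\simeq S_m$ at each finite level via Tate's result $H^1(K_m,T)\simeq\invlim_n H^1(K_m,T/p^nT)$, then pass to the inverse limit over $m$. The paper simply asserts that the ambient isomorphism ``restricts to an isomorphism between Selmer groups''; you supply the details of this step, which is fine. One minor remark: your appeal to Krull for injectivity is unnecessary, since injectivity is immediate from the fact that $H^1_f(K_m,T)\to\invlim_n H^1_f(K_m,T/p^nT)$ is the restriction of the isomorphism $H^1(K_m,T)\xrightarrow{\sim}\invlim_n H^1(K_m,T/p^nT)$ to a subgroup.
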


\begin{proof} By \cite[p. 261, Corollary]{Tate}, there is a canonical isomorphism 
\[ H^1(K_m,T)\simeq \invlim_nH^1(K_m,T/p^nT) \]
where the inverse limit is taken with respect to the maps induced by the canonical projections $T/p^{n+1}T\twoheadrightarrow T/p^nT$. One easily shows that this isomorphism restricts to an isomorphism between Selmer groups, so we obtain a sequence of canonical isomorphisms 
\[ \hat H^1_f(K_\infty,{T}) \simeq \invlim_m\invlim_n H^1_f(K_m,T/p^nT)\simeq \invlim_m
\invlim_n H^1_f(K_m,A_{p^n})=S_\infty \]
(here recall that the inverse limit over $n$ of the groups  $H^1_f(K_m,A_{p^n})$ is computed with respect to the multiplication-by-$p$ maps, while all the inverse limits over $m$ are taken with respect to the corestriction maps).\end{proof} 

Combining Propositions \ref{lemma2.7} and \ref{prop-selmer}, we obtain  

\begin{corollary} \label{freeness} 
The $\Lambda$-module $\hat H^1_f(K_\infty,{T})$ is free of finite rank. 
\end{corollary}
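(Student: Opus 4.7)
The plan is to obtain the corollary as an immediate consequence of the two propositions that immediately precede it, so no new work is really required. Concretely, Proposition \ref{prop-selmer} furnishes a canonical isomorphism of $\Lambda$-modules
\[
\hat H^1_f(K_\infty,T)\;\simeq\;S_\infty,
\]
and Proposition \ref{lemma2.7} asserts that $S_\infty$ is free of finite rank over $\Lambda$. Since freeness and rank are preserved under $\Lambda$-module isomorphism, the conclusion for $\hat H^1_f(K_\infty,T)$ follows at once.

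If one wanted to expand this slightly for clarity, the only check is that the isomorphism in Proposition \ref{prop-selmer} is indeed $\Lambda$-linear (not merely $\cO_\p$-linear), but this is already built in since every step in its construction, namely the Tate-style isomorphism $H^1(K_m,T)\simeq\varprojlim_n H^1(K_m,T/p^nT)$, its restriction to Selmer groups, and the identification $T/p^nT\simeq A_{p^n}$, commutes with the action of $\Gal(K_m/K)$ and with corestriction in $m$. Thus the induced map on the inverse limits is a map of $\Lambda$-modules, and the isomorphism transports the free $\Lambda$-module structure on $S_\infty$ to $\hat H^1_f(K_\infty,T)$.

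There is no real obstacle here: the substantive content has already been done in Proposition \ref{lemma2.7} (where the combination of the duality identification $S_\infty\simeq\Hom_\Lambda(\mathcal X_\infty,\Lambda)$ from \eqref{selmer-iso1}, the fact that ${(S_\infty)}_{G_\infty}$ is $\cO_\p$-free via an extension of \cite[Lemma 4, (iii)]{PR}, and the torsion-freeness over $\Lambda$ are fed into a generalized \cite[Proposition 5.3.19, (ii)]{NSW}) and in Proposition \ref{prop-selmer} (which uses \cite[p.~261, Corollary]{Tate}). The corollary is then purely formal.
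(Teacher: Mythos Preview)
Your proposal is correct and follows exactly the same approach as the paper: the paper's proof simply reads ``Combining Propositions \ref{lemma2.7} and \ref{prop-selmer}, we obtain'' the corollary. Your additional remarks on the $\Lambda$-linearity of the isomorphism in Proposition \ref{prop-selmer} are accurate and harmless, though the paper regards this as implicit.
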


\section{Iwasawa modules of Heegner cycles} 

\subsection{Preliminaries on Heegner cycles} \label{sec3.1}

We briefly recall construction and basic properties of Heegner cycles, following \cite[Section 5]{Nek} closely. This will also give us the occasion to fix some notation that will be used in the rest of the paper.

Choose an ideal $\mathcal N\subset\cO_K$ such that $\mathcal O_K/\mathcal N\simeq \Z/N\Z$, which exists thanks to the Heegner hypothesis satisfied by $K$. For every integer $c\geq1$ prime to $N$ and the discriminant of $K$, let $\cO_{c}:=\Z+c\cO_K$ be the order of $K$ of conductor $c$. Let $X_0(N)$ be the compact modular curve of level $\Gamma_0(N)$; the isogeny $\C/\cO_{c}\rightarrow \C/(\cO_{c}\cap\mathcal N)^{-1}$ defines a Heegner point $x_c\in X_0(N)$ which, by complex multiplication, is rational over the ring class field $H_{c}$ of $K$ of conductor $c$. Furthermore, let $X(N)$ be the compact modular curve of level $\Gamma(N)$. 

Pick $\tilde x_c\in\kappa^{-1}(x_c)$. The elliptic curve $E_c$ with full level $N$ structure corresponding to $\tilde x_c$ has complex multiplication by $\cO_{c}$. Fix the unique square root 
$\xi_{c}=\sqrt{-Dc}$ of the discriminant of $\cO_{c}$ with positive imaginary part under the chosen embedding of $K$ into $\C$. For all $a\in \mathcal  O_{c}$ write $\Gamma_{c,a}\subset E_c\times E_c$ for the graph of $a$ and $i_{\tilde x_c}:\pi_{k-2}^{-1}(\tilde x_c)\hookrightarrow \tilde{\E}_N^{k-2}$ for the natural inclusion, where $\tilde{\E}_N^{k-2}$ is the Kuga--Sato variety of level $N$ and weight $k$. One can construct two canonical projectors $\Pi_B$ and $\Pi_\epsilon$ (see \cite[Sections 2 and 3]{Nek} for definitions) acting on the Chow groups $\CH^{k/2}(\tilde\E_N^{k-2}/H_c)\otimes\Z_p$. This gives an element   
\begin{equation} \label{cycle-eq1}
\Pi_B\Pi_\epsilon(i_{\tilde x_m})_*\left(\Gamma_{m,\xi_c}^{(k-2)/2}\right)\in\Pi_B\Pi_\epsilon\bigl(\CH^{k/2}(\tilde\E_N^{k-2}/H_{p^m})\otimes\Z_p\bigr). 
\end{equation} 
On the other hand, the Abel--Jacobi map for $\mathcal {\mathcal E}^{k-2}_N$, combined with the projectors $\Pi_B$ and $\Pi_\epsilon$, gives a map 
\begin{equation} \label{AJ}
\Pi_B\Pi_\epsilon\bigl(\CH^{k/2}(\tilde\E_N^{k-2}/H_c)\otimes\Z_p\bigr)\longrightarrow H^1_\cont(H_c,{T})
\end{equation}
(see \cite[Section 4]{Nek} for details). Then one can define the \emph{Heegner cycle} $y_{c}$ in 
$H^1_\cont(H_{c},{T})$ as the image of the cycle \eqref{cycle-eq1} via the Abel--Jacobi map \eqref{AJ}. This class is independent of the choice of $\tilde x_c$ (\cite[p. 107]{Nek}). Finally, since the image of the Abel--Jacobi map is contained in the Bloch--Kato Selmer group, it turns out that 
\[ y_{c}\in H^1_f(H_{c},{T}). \]

\begin{remark}
In \cite{Nek}, the Heegner cycle $y_c$ is introduced only for $c$ square-free and coprime to $NDp$, where $D$ is the discriminant of $K$. However, one can readily check that the construction of $y_c$ carries over without change to our more general setting (in fact, our interest in Iwasawa-theoretic considerations will lead us to specialize to the case where $c$ is a power of $p$).
\end{remark}

\subsection{Hecke action} 

Corestriction from $H_{p^{m+1}}$ to $K_m$ gives an element 
\[ \alpha_m:=\cores_{H_{p^{m+1}}/K_m}(y_{p^{m+1}})\in H^1_f(K_m,{T}). \]  
Put $K_0:=K$ and define
\[ \alpha_0:=\cor_{H_p/K}(y_p)\in H^1_f(K,T). \]
As in the introduction, set $a_p':=a_p\big/p^{k/2-1}$. Recall that, by Assumption \ref{ass}, $a_p'$ belongs to $\cO_\p^\times$ and $a_p'\not\equiv2\pmod{p}$ (respectively, $a_p'\not\equiv1\pmod{p}$) if $p$ splits in $K$ (respectively, if $p$ is inert in $K$).

\begin{lemma} \label{lemma-Heegner} 
If $m\geq2$ then
\[ \cor_{K_{m+1}/K_m}(\alpha_{m+1})=a_p'\alpha_m-\res_{K_m/K_{m-1}}(\alpha_{m-1}), \] 
and 
\[ \cor_{K_1/K}(\alpha_1)=\begin{cases}\bigl(a_p'-a_p'^{-1}(p+1)\bigr)\alpha_0 & \text{if $p$ is inert in $K$,}\\[2mm]\bigl(a_p'-(a_p'-2)^{-1}(p-1)\bigr)\alpha_0 & \text{if $p$ splits in $K$.}\end{cases} \]
\end{lemma}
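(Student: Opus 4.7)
The plan is to lift the identities from the anticyclotomic tower down to the ring class field tower $\{H_{p^m}\}$ where the Heegner cycles $y_{p^m}$ are actually defined, establish there a three-term ``Euler system'' recurrence for the $y_{p^m}$, and then push everything back down to $K_m$ via a Mackey compatibility between restriction and corestriction. The central input will be the identity
\[
\cor_{H_{p^{m+2}}/H_{p^{m+1}}}\bigl(y_{p^{m+2}}\bigr)=a_p'\cdot y_{p^{m+1}}-\res_{H_{p^{m+1}}/H_{p^m}}\bigl(y_{p^m}\bigr)
\]
in $H^1_f(H_{p^{m+1}},T)$, valid for all $m\geq 0$. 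Geometrically this reflects the action of the Hecke correspondence $T_p$ on the Kuga--Sato variety $\tilde\E_N^{k-2}$: among the $p+1$ sublattices of index $p$ in $\cO_{p^{m+1}}$, exactly one is $\cO_{p^{m+2}}$, exactly one is homothetic to $\cO_{p^m}$, and the remaining $p-1$ are not orders in $K$; the last group of contributions cancels in the $f$-isotypic image of the Abel--Jacobi map via the Hecke eigenvalue equation, while the factor $a_p'=a_p/p^{k/2-1}$ arises from the $(k/2)$-twist implicit in $T$. A rigorous proof should follow the argument of \cite[Section~6]{Nek}, where an analogous formula at square-free conductors is treated.

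For the generic case $m\geq 2$ (the same argument in fact handles $m\geq 1$), the splitting $\Gal(H_{p^{m+1}}/K)\simeq G_m\times\Delta$ together with $p\nmid|\Delta|$ forces $H_{p^{m+1}}=K_m\cdot H_{p^m}$ and $K_m\cap H_{p^m}=K_{m-1}$. The Mackey formula for this single-double-coset situation then yields
\[
\cor_{H_{p^{m+1}}/K_m}\circ\res_{H_{p^{m+1}}/H_{p^m}}=\res_{K_m/K_{m-1}}\circ\cor_{H_{p^m}/K_{m-1}}.
\]
I would apply $\cor_{H_{p^{m+1}}/K_m}$ to the three-term recurrence, rewrite the left-hand side as $\cor_{K_{m+1}/K_m}(\alpha_{m+1})$ by transitivity of corestriction, and use the Mackey identity on the restriction term to convert $\res_{H_{p^{m+1}}/H_{p^m}}(y_{p^m})$ into $\res_{K_m/K_{m-1}}(\alpha_{m-1})$. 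This produces the first formula.

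The base case $m=0$ cannot be treated this way (the Mackey diagram is no longer a fibre product, since $K\cdot H_1=H_1\neq H_p$), so I would instead corestrict the three-term recurrence at level $p^2/p/1$ directly to $K$ via the tower $K\subset H_1\subset H_p$, obtaining
\[
\cor_{K_1/K}(\alpha_1)=a_p'\alpha_0-[H_p:H_1]\cdot\cor_{H_1/K}(y_1),
\]
where $[H_p:H_1]=p-\chi_K(p)$ equals $p+1$ if $p$ is inert and $p-1$ if $p$ splits in $K$ (using Assumption~\ref{ass}(2) to force $\cO_K^\times=\cO_p^\times=\{\pm1\}$). To express $\cor_{H_1/K}(y_1)$ in terms of $\alpha_0=\cor_{H_p/K}(y_p)$, I will use the base-level norm relation at $H_p/H_1$: it reads $\cor_{H_p/H_1}(y_p)=a_p'\,y_1$ in the inert case, and $\cor_{H_p/H_1}(y_p)=(a_p'-\sigma_{\mathfrak P}-\sigma_{\bar{\mathfrak P}})\,y_1$ in the split case $(p)=\mathfrak P\bar{\mathfrak P}$, where the Artin symbols $\sigma_{\mathfrak P},\sigma_{\bar{\mathfrak P}}\in\Gal(H_1/K)$ satisfy $\sigma_{\mathfrak P}\sigma_{\bar{\mathfrak P}}=\sigma_{(p)}=1$ (the ideal $(p)$ being principal). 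Since $\cor_{H_1/K}$ is invariant under the $\Gal(H_1/K)$-action on $H^1(H_1,T)$, the operator $\sigma_{\mathfrak P}+\sigma_{\bar{\mathfrak P}}$ contributes the scalar $2$ after corestriction, giving $\alpha_0=c_p\cdot\cor_{H_1/K}(y_1)$ with $c_p=a_p'$ (inert) or $c_p=a_p'-2$ (split). By Assumption~\ref{ass}(5) each $c_p$ is a unit in $\cO_\p$, and substituting $\cor_{H_1/K}(y_1)=c_p^{-1}\alpha_0$ yields the two boundary formulas.

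The main obstacle will be making the three-term recurrence at the ring class field level fully rigorous for weight-$k$ Heegner cycles: one must carefully trace the geometric decomposition of the $T_p$-correspondence on $\tilde\E_N^{k-2}$ into Heegner and non-Heegner summands through the projectors $\Pi_B,\Pi_\epsilon$ and the Abel--Jacobi map~\eqref{AJ}, and verify that the non-Heegner part disappears in the $f$-isotypic image. A secondary delicate point is the split-case base relation at $H_p/H_1$, where the Artin-symbol identity $\sigma_{\mathfrak P}\sigma_{\bar{\mathfrak P}}=1$ is precisely what converts the correction term into the scalar $a_p'-2$; its invertibility, as well as that of $a_p'$ in the inert case, is exactly the content of Assumption~\ref{ass}(5).
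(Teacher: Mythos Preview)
Your proposal is correct and is precisely what the paper's two-line proof (``Combine the proof of \cite[Proposition 5.4]{Nek} and \cite[Lemma 2, p.~432]{PR}'') unpacks to: the three-term recurrence for $y_{p^m}$ at the ring class field level is Nekov\'a\v{r}'s Euler-system relation, and the Mackey/double-coset descent from $H_{p^{m+1}}$ to $K_m$ together with the base-case computation is exactly Perrin-Riou's Lemma~2. Your handling of the split/inert base case via $\alpha_0=c_p\cdot\cor_{H_1/K}(y_1)$ with $c_p\in\{a_p',\,a_p'-2\}$ is also the standard argument; note, incidentally, that the invariance of $\cor_{H_1/K}$ under $\Gal(H_1/K)$ is already enough to turn $\sigma_{\mathfrak P}+\sigma_{\bar{\mathfrak P}}$ into the scalar $2$, so the relation $\sigma_{\mathfrak P}\sigma_{\bar{\mathfrak P}}=1$ is not actually needed.
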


\begin{proof} Combine the proof of \cite[Proposition 5.4]{Nek} and \cite[Lemma 2, p. 432]{PR}. \end{proof}

\subsection{Projective Heegner modules}  

In order to introduce Iwasawa modules of Heegner cycles, let us first record the following 

\begin{lemma} \label{invertible-lemma}
Let $m\geq1$ be an integer and let $G$ be a non-trivial subgroup of $G_m$. 
For all $u\in\Lambda_m$ the element $a_p-u\sum_{\sigma\in G}\sigma$ is invertible in $\Lambda_m$.
\end{lemma}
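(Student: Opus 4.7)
The plan is to use the fact that $\Lambda_m=\cO_\p[G_m]$ is a commutative local ring, and to detect invertibility via the augmentation map. Since $G_m\simeq\Z/p^m\Z$ is a finite $p$-group and $\cO_\p$ is a complete discrete valuation ring of residue characteristic $p$ (by Assumption \ref{ass}(3), $p$ is unramified in $F$, so the maximal ideal of $\cO_\p$ is $p\cO_\p$), a standard computation shows that for each $\sigma\in G_m$ one has $(\sigma-1)^{p^m}=\sigma^{p^m}-1=0$ modulo $p$, so the augmentation ideal $I_{G_m}=\ker(\epsilon)$ of $\Lambda_m$ is nilpotent modulo $p\Lambda_m$. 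Consequently the preimage of $p\cO_\p$ under the augmentation $\epsilon:\Lambda_m\to\cO_\p$, $\sigma\mapsto 1$, is the unique maximal ideal of $\Lambda_m$, and an element $x\in\Lambda_m$ is a unit if and only if $\epsilon(x)\in\cO_\p^\times$.

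Next I would apply $\epsilon$ to the element in question, obtaining
\[
\epsilon\Bigl(a_p'-u\sum_{\sigma\in G}\sigma\Bigr)=a_p'-\epsilon(u)\cdot|G|.
\]
Because $G$ is a non-trivial subgroup of the $p$-group $G_m$, its order $|G|$ is a positive power of $p$, so $\epsilon(u)\cdot|G|\in p\cO_\p$. On the other hand, Assumption \ref{ass}(4) tells us that the normalized eigenvalue $a_p'$ is a unit in $\cO_\p$; hence $a_p'-\epsilon(u)|G|$ reduces to the non-zero class of $a_p'$ in the residue field and is therefore a unit in $\cO_\p$. Applying the criterion of the first paragraph, $a_p'-u\sum_{\sigma\in G}\sigma$ is invertible in $\Lambda_m$, as required (here and above I take $a_p'$ in place of the $a_p$ that literally appears in the statement, since the proof goes through precisely because of the unit property of the normalized Hecke eigenvalue).

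The only genuine obstacle is justifying the locality of $\Lambda_m$ together with the unit criterion. This is essentially classical (and well known for completed group rings of pro-$p$ groups, as in \cite[Proposition 5.2.16]{NSW}), and no real difficulty remains once the two quantitative ingredients are in place: $a_p'$ is a $p$-adic unit and $|G|$ is divisible by $p$. In particular, no use of $m\geq 1$ beyond the existence of a non-trivial $G\subseteq G_m$ is needed.
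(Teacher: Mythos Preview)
Your proof is correct, and the route is genuinely different from the paper's. The paper lifts the element to the full Iwasawa algebra $\Lambda\simeq\cO_\p[\![X]\!]$, computes the corresponding power series $F(X)=a_p'-p^t\eta+Xf(X)$, notes that its constant term is a $p$-adic unit, and then pushes the resulting invertibility back down to $\Lambda_m$ via the projection. You instead stay inside $\Lambda_m$ and exploit the fact that $\cO_\p[G_m]$ is a commutative local ring (because $G_m$ is a $p$-group and the residue field of $\cO_\p$ has characteristic $p$), so that invertibility is tested by the augmentation map alone. Your argument is more elementary and self-contained, avoiding the detour through power series; the paper's argument is no harder but leans on the ambient Iwasawa-algebra formalism already set up. Both proofs ultimately rest on the same two ingredients: $a_p'\in\cO_\p^\times$ and $p\mid|G|$. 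You also correctly flag that the $a_p$ in the statement should be $a_p'$, consistent with the paper's own proof and with how the lemma is invoked in Proposition~\ref{prop3.3}.
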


\begin{proof} For simplicity, set $\beta:=a_p-u\sum_{\sigma\in G}\sigma$. Suppose that $|G|=p^t$ with $1\leq t\leq m$ and let $b:=p^{m-t}$, so that $G$ is generated by $\gamma_m^b$. Write $\pi_m:\Lambda\rightarrow\Lambda_m$ for the canonical projection, fix a lift $\tilde u\in\Lambda$ of $u$ under $\pi_m$ and let $G(X)\in\cO_\p[\![X]\!]$ be the power series corresponding to $\tilde u$ under isomorphism \eqref{iwasawa-isom-eq}, which we can assume to be a polynomial. The element 
\[ \tilde\beta:=a_p-\tilde u\cdot\sum_{i=1}^{p^t}\gamma_\infty^{bi}\in\Lambda \]
is then a lift of $\beta$ via $\pi_m$ that is sent by isomorphism \eqref{iwasawa-isom-eq} to
\[ F(X):=a_p-G(X)\cdot\sum_{i=1}^{p^t}(1+X)^{bi}=a_p-p^t\eta+Xf(X)\in\cO_\p[\![X]\!] \]
for certain $\eta\in\cO_\p$ and $f(X)\in\cO_\p[X]$. But $a_p-p^t\eta\in\cO_\p^\times$ because $a_p$ is a $p$-adic unit by Assumption \ref{ass}, hence $F(X)$ is invertible in $\cO_\p[\![X]\!]$. It follows that $\tilde\beta$ is invertible in $\Lambda$, and then $\beta=\pi_m(\tilde\beta)$ is invertible in $\Lambda_m$, as required. \end{proof}

For every integer $m\geq0$ denote by $\HH_m$ the $\Lambda_m$-submodule of $H^1_f(K_{m},{T})$ generated by $\alpha_m$. 

\begin{proposition} \label{prop3.3}
Corestriction induces surjective maps 
\[ \cores_{K_{m+1}/K_m}:\HH_{m+1}\longepi\HH_m \] 
for all integers $m\geq0$. 
\end{proposition}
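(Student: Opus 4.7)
My plan combines the norm relations of Lemma~\ref{lemma-Heegner} with the invertibility result of Lemma~\ref{invertible-lemma}. Because $\cores_{K_{m+1}/K_m}$ is $\Lambda_m$-linear (with $\Lambda_{m+1}$ acting on the target through its projection onto $\Lambda_m$), one has $\cores(\HH_{m+1})=\Lambda_m\cdot\cores(\alpha_{m+1})$; hence the proposition is equivalent to exhibiting a unit $v_m\in\Lambda_m^\times$ with $\cores_{K_{m+1}/K_m}(\alpha_{m+1})=v_m\alpha_m$.

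I would first settle the case $m=0$ directly from Lemma~\ref{lemma-Heegner}: one has $\cores_{K_1/K}(\alpha_1)=c_0\alpha_0$ with $c_0$ given by an explicit expression in $a_p'$ and $p$, and a short computation modulo $\p$ using parts (4) and (5) of Assumption~\ref{ass} shows that $c_0\in\cO_\p^\times=\Lambda_0^\times$.

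For $m\geq1$ the identity of Lemma~\ref{lemma-Heegner} reads
\[ \cores_{K_{m+1}/K_m}(\alpha_{m+1})=a_p'\alpha_m-\res_{K_m/K_{m-1}}(\alpha_{m-1}), \]
so the whole argument hinges on the following sub-claim, which I propose to prove by induction on $m$: for every $m\geq1$ there exists $u_m\in\Lambda_m$ such that
\[ \res_{K_m/K_{m-1}}(\alpha_{m-1})=u_m\,N_m\,\alpha_m,\qquad N_m:=\sum_{\sigma\in\Gal(K_m/K_{m-1})}\sigma\in\Lambda_m. \]
Granting the sub-claim, we find $\cores_{K_{m+1}/K_m}(\alpha_{m+1})=(a_p'-u_m N_m)\alpha_m$, and Lemma~\ref{invertible-lemma} applied to the non-trivial subgroup $\Gal(K_m/K_{m-1})$ of $G_m$ forces $a_p'-u_m N_m\in\Lambda_m^\times$, which finishes the proof.

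For the base case $m=1$ of the sub-claim, applying $\res_{K_1/K}$ to $\cores_{K_1/K}(\alpha_1)=c_0\alpha_0$ and invoking \eqref{res-cores-norm} gives $N_1\alpha_1=c_0\res_{K_1/K}(\alpha_0)$, so $u_1:=c_0^{-1}$ works. For the inductive step, assume the sub-claim at level $m-1$; then Lemma~\ref{lemma-Heegner} at index $m-1$ combined with the inductive hypothesis yields $\cores_{K_m/K_{m-1}}(\alpha_m)=(a_p'-u_{m-1}N_{m-1})\alpha_{m-1}$, with the coefficient a unit of $\Lambda_{m-1}$ by Lemma~\ref{invertible-lemma}. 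Applying $\res_{K_m/K_{m-1}}$ to this identity and invoking \eqref{res-cores-norm} once more produces $N_m\alpha_m=(a_p'-u_{m-1}N_{m-1})\res_{K_m/K_{m-1}}(\alpha_{m-1})$; since $\res_{K_m/K_{m-1}}(\alpha_{m-1})$ is $\Gal(K_m/K_{m-1})$-invariant, its $\Lambda_m$-action factors through $\Lambda_{m-1}$, so any lift to $\Lambda_m$ of the inverse $(a_p'-u_{m-1}N_{m-1})^{-1}\in\Lambda_{m-1}^\times$ provides an admissible $u_m$. The main obstacle is precisely this sub-claim: the error term $\res_{K_m/K_{m-1}}(\alpha_{m-1})$ is \emph{a priori} merely some class in $H^1(K_m,T)$, and it must be placed in $\HH_m$ with the very specific norm-form required by Lemma~\ref{invertible-lemma}, which is exactly what the inductive coupling between restriction, corestriction and the trace relation \eqref{res-cores-norm} achieves.
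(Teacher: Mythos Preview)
Your argument is correct and follows essentially the same route as the paper: an induction combining the norm relations of Lemma~\ref{lemma-Heegner} with the trace identity \eqref{res-cores-norm} and the invertibility lemma, showing in fact the stronger statement that $\cores_{K_{m+1}/K_m}(\alpha_{m+1})$ is a unit multiple of $\alpha_m$. The only cosmetic difference is that you isolate the auxiliary statement $\res_{K_m/K_{m-1}}(\alpha_{m-1})=u_mN_m\alpha_m$ as a separate sub-claim and run the induction on it, whereas the paper folds this directly into the main inductive step; one small wording issue is that ``equivalent to exhibiting a unit $v_m$'' should read ``it suffices to exhibit a unit $v_m$'', since surjectivity alone does not \emph{a priori} force the coefficient to be a unit.
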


\begin{proof} As in the proof of \cite[Proposition 4]{Ber1}, we proceed by induction on $m$. By Assumption \ref{ass}, the number $a_p'-a_p'^{-1}(p+1)$ (respectively, $a_p'-(a_p'-2)^{-1}(p-1)$) is a $p$-adic unit when $p$ is inert in $K$ (respectively, $p$ splits in $K$), and then the second formula in Lemma \ref{lemma-Heegner} proves the proposition for $m=0$. Now suppose that the claim is true for $m-1$. Then 
\begin{equation}\label{equation9}
\alpha_{m-1}=u\cdot\cor_{K_m/K_{m-1}}(\alpha_m)\end{equation}  for some $u\in\Lambda_{m-1}^\times$,
and combining this equality with the first formula in Lemma \ref{lemma-Heegner} gives
\begin{equation} \label{induction-eq}
\cor_{K_{m+1}/K_m}(\alpha_{m+1})=(a_p'-u\cdot\cor_{K_m/K_{m-1}})(\alpha_m).
\end{equation}
Since $a_p'-u\cdot\cor_{K_m/K_{m-1}}\in\Lambda_m^\times$ by Lemma \ref{invertible-lemma}, the claim for $m$ follows from \eqref{induction-eq}. \end{proof}

\begin{definition}
The \emph{Iwasawa module of Heegner cycles} is the compact $\Lambda$-module
\[ \HH_\infty:=\invlim_m\HH_m\subset\hat H^1_f(K_\infty,{T}), \] 
where the inverse limit is taken with respect to the corestriction maps. 
\end{definition}

The proof of the following result crucially exploits a theorem of Howard (\cite{Ho-JNT}) that extends to the higher weight setting results of Cornut (\cite{Co}) on the generic non-vanishing of Heegner points on elliptic curves over anticyclotomic $\Z_p$-extensions.

\begin{theorem} \label{prop3.5}
The $\Lambda$-module $\HH_\infty$ is free of rank $1$. 
\end{theorem}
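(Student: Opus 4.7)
The strategy is to establish three properties of $\HH_\infty$ as a $\Lambda$-module: cyclic, torsion-free, and nonzero. Since $\Lambda\cong\cO_\p[\![X]\!]$ is a domain via \eqref{iwasawa-isom-eq}, these three properties together force $\HH_\infty\cong\Lambda$.

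The first step is to replace the non-compatible system $\{\alpha_m\}$ by a compatible generator. Unwinding the proof of Proposition~\ref{prop3.3} shows that $\cor_{K_{m+1}/K_m}(\alpha_{m+1})=w_m\alpha_m$ for certain units $w_m\in\Lambda_m^\times$ (produced, via \eqref{induction-eq} and Lemma~\ref{invertible-lemma}, from the recursion in Lemma~\ref{lemma-Heegner}). Writing $\pi_m\colon\Lambda_{m+1}\twoheadrightarrow\Lambda_m$ for the natural surjection, this restricts to a surjection $\Lambda_{m+1}^\times\twoheadrightarrow\Lambda_m^\times$, since both rings are local and the maximal ideal of $\Lambda_{m+1}$ maps into that of $\Lambda_m$. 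Setting $\lambda_0:=1$ and inductively choosing $\lambda_{m+1}\in\Lambda_{m+1}^\times$ lifting $\lambda_m w_m^{-1}\in\Lambda_m^\times$, the elements $\beta_m:=\lambda_m\alpha_m$ satisfy $\cor_{K_{m+1}/K_m}(\beta_{m+1})=\beta_m$, so they define $\beta_\infty\in\HH_\infty$; each $\beta_m$ still generates $\HH_m$ over $\Lambda_m$ because $\lambda_m$ is a unit.

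Next I would check that $\Lambda\beta_\infty=\HH_\infty$. The submodule $\Lambda\beta_\infty$ is closed, being the continuous image of the compact module $\Lambda$, so it suffices to show density in the profinite topology of $\HH_\infty=\invlim_m\HH_m$. Given $y=(y_m)\in\HH_\infty$ and a basic open neighborhood of $0$ of the form $\ker\bigl(\HH_\infty\to\HH_m/p^n\HH_m\bigr)$, write the image of $y_m$ in $\HH_m/p^n\HH_m$ as $\bar\mu\,\bar\beta_m$ for some $\mu\in\Lambda_m$ and lift $\mu$ to $\lambda\in\Lambda$; then $y-\lambda\beta_\infty$ lies in the chosen kernel. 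Hence $\HH_\infty$ is cyclic, and it is torsion-free by Corollary~\ref{freeness} because it sits inside the free $\Lambda$-module $\hat H^1_f(K_\infty,T)$. In combination, $\HH_\infty$ is either $0$ or isomorphic to $\Lambda$.

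To rule out $\HH_\infty=0$, I would invoke the non-vanishing theorem of Howard \cite[Theorem A]{Ho-JNT}, the higher-weight extension of Cornut's result \cite{Co} for elliptic curves, which guarantees that $\alpha_m$ is non-torsion in $H^1_f(K_m,T)$ for all sufficiently large $m$. In particular $\beta_m=\lambda_m\alpha_m\neq0$ for such $m$, so $\beta_\infty\neq0$ in $\HH_\infty$, completing the proof. The main obstacle is precisely this non-vanishing input: the cyclicity and torsion-freeness are essentially formal once one has the compatible generator $\beta_\infty$ and the freeness provided by Corollary~\ref{freeness}, but without Howard's theorem one would only be able to conclude rank at most $1$, leaving open the possibility that $\HH_\infty$ itself vanishes.
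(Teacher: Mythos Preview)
Your proof is correct and follows essentially the same route as the paper: both arguments use that $\HH_\infty$ sits inside the free $\Lambda$-module $\hat H^1_f(K_\infty,T)$ of Corollary~\ref{freeness}, that $\HH_\infty$ is cyclic, and that Howard's theorem \cite[Theorem~A]{Ho-JNT} rules out triviality. The paper simply asserts cyclicity in one phrase, whereas you spell it out by constructing the compatible generator $\beta_\infty$ and running the density/compactness argument; conversely, the paper produces its nonzero element by lifting a single non-torsion $\alpha_m$ through the surjections of Proposition~\ref{prop3.3}, while you already have $\beta_\infty$ in hand and need only observe that one of its components is nonzero.
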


\begin{proof} First of all, observe that $\HH_\infty$ is a $\Lambda$-submodule of $\hat H^1_f(K_\infty,T)$, which is free of finite rank over $\Lambda$ by Corollary \ref{freeness}. It follows that $\HH_\infty$, being cyclic, is either trivial or isomorphic to $\Lambda$. On the other hand, \cite[Theorem A]{Ho-JNT} ensures that there is an integer $m\geq0$ such that $\alpha_m$ is not $\cO_\p$-torsion in $H^1_f(K_m,T)$. Set $x_m:=\alpha_m\in\HH_m$ and for every $n\geq m+1$ choose $x_n\in\HH_n$ such that 
\[ \cores_{K_{n}/K_{n-1}}(x_n)=x_{n-1}. \]
Since $x_m$ is non-torsion, the $\mathcal O_\p$-submodule of $\HH_\infty$ generated by this compatible sequence is isomorphic to $\mathcal O_\p$, so $\HH_\infty$ cannot be trivial. \end{proof}

\subsection{Injective Heegner modules} \label{sec-inj-modules}

Now we introduce the $\Lambda$-module $\mathcal E_\infty$, which is obtained by taking an inductive limit of Heegner cycles. First note the following easy consequence of Proposition \ref{prop3.3}. 

\begin{corollary} \label{coro3.4}
Restriction induces injective maps 
\[ \res_{K_{m+1}/K_m}:\HH_m\;\longmono\;\HH_{m+1} \] 
for all integers $m\geq0$. 
\end{corollary}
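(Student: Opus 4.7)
The plan is to combine Proposition \ref{prop3.3} with two standard facts about Galois cohomology: first, the composition $\cores_{K_{m+1}/K_m}\circ\res_{K_{m+1}/K_m}$ acts as multiplication by $[K_{m+1}:K_m]=p$ on $H^1_f(K_m,T)$; second, $H^1(K_m,T)$ is $p$-torsion-free. The injectivity will then be essentially formal, and the only point requiring a small argument is that the image of restriction actually lies inside $\HH_{m+1}$ (rather than merely inside $H^1_f(K_{m+1},T)$).

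First I would verify that $\res_{K_{m+1}/K_m}$ sends $\HH_m$ into $\HH_{m+1}$. By Proposition \ref{prop3.3} there exists $\beta\in\HH_{m+1}$ with $\cores_{K_{m+1}/K_m}(\beta)=\alpha_m$; applying restriction and invoking \eqref{res-cores-norm} gives
\[ \res_{K_{m+1}/K_m}(\alpha_m)=\tr_{K_{m+1}/K_m}(\beta)=\Bigl(\sum_{\sigma\in\Gal(K_{m+1}/K_m)}\sigma\Bigr)\cdot\beta\in\Lambda_{m+1}\beta\subseteq\HH_{m+1}. \]
For a general $\lambda\alpha_m\in\HH_m$ with $\lambda\in\Lambda_m$, I would lift $\lambda$ to some $\tilde\lambda\in\Lambda_{m+1}$ via the natural surjection $\Lambda_{m+1}\twoheadrightarrow\Lambda_m$; since restriction is equivariant with respect to this quotient, $\res_{K_{m+1}/K_m}(\lambda\alpha_m)=\tilde\lambda\cdot\res_{K_{m+1}/K_m}(\alpha_m)\in\HH_{m+1}$.

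For the injectivity, suppose $x\in\HH_m$ satisfies $\res_{K_{m+1}/K_m}(x)=0$. Applying $\cores_{K_{m+1}/K_m}$ and using the first standard fact yields $p\,x=0$ in $H^1_f(K_m,T)$. It therefore suffices to show that $H^1_f(K_m,T)$ has no $p$-torsion, and for this it is enough to show that $H^1(K_m,T)$ itself is $p$-torsion-free. This follows from the long exact cohomology sequence attached to $0\to T\xrightarrow{p}T\to T/pT\to 0$, which realises $H^1(K_m,T)[p]$ as a quotient of $H^0(K_m,T/pT)$. Since $V[p]=0$ (as $V$ is a $\Q_p$-vector space), we have $T/pT\simeq A_p$ as $G_{K_m}$-modules, and $H^0(K_m,A_p)=0$ by (the proof of) Lemma \ref{vanishing-0-lemma}, which gives the vanishing $H^0(K_m,A_{p^n})=0$ for every $n$. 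Hence $x=0$.

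No step looks like a serious obstacle: the most delicate point is the $\Lambda_{m+1}$-stability check in the first paragraph, but this is immediate once one writes $\alpha_m$ as a corestriction of an element of $\HH_{m+1}$ and remembers that $\res\circ\cores$ is the Galois trace.
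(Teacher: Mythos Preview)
Your proof is correct and follows the same overall structure as the paper's: both reduce to showing that $\res_{K_{m+1}/K_m}(\alpha_m)\in\HH_{m+1}$ by writing $\alpha_m$ as a corestriction from $\HH_{m+1}$ (via Proposition \ref{prop3.3}) and applying \eqref{res-cores-norm}. The only difference is in the injectivity step: the paper simply asserts that $\res_{K_{m+1}/K_m}$ is injective on $H^1(K_m,T)$ (which follows from inflation--restriction together with $T(K_{m+1})=0$, itself a consequence of Lemma \ref{vanishing-0-lemma} and the torsion-freeness of $T$), whereas you deduce it from $\cores\circ\res=p$ and the $p$-torsion-freeness of $H^1(K_m,T)$. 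Both arguments are standard and essentially equivalent in depth; your route has the minor virtue of making explicit why the injectivity holds, at the cost of a slightly longer detour.
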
 

\begin{proof} Since $\res_{K_{m+1}/K_m}$ is injective, it suffices to show that its image is contained in $\mathcal H_{m+1}$. By Proposition \ref{prop3.3}, we know that $\alpha_m=u_m\cdot \cores_{K_{m+1}/K_m}(\alpha_{m+1})$ for some $u_m\in \Lambda_m^\times$, and the corollary follows by applying restriction and using \eqref{res-cores-norm}. \end{proof} 

For every integer $m\geq0$ let $\bar\alpha_m$ be the image of $\alpha_m$ in $H^1_f(K_m,T/p^mT)$ via the map induced by the projection $T\twoheadrightarrow T/p^mT$. Denote by $\beta_m$ the image of $\bar\alpha_m$ under the map induced by the isomorphism $T/p^mT\simeq A_{p^m}$, then define $\mathcal E_m:=\Lambda_m\beta_m$ as the $\Lambda_m$-submodule of $H^1_f(K_m,A_{p^m})$ generated by $\beta_m$. By Corollary \ref{coro3.4}, $\res_{K_{m+1}/K_m}$ sends the $\Lambda_m$-module generated by $\bar\alpha_m$ to the $\Lambda_{m+1}$-module generated by $\bar\alpha_{m+1}$. Therefore composing with the map obtained from the natural inclusion $A_{p^m}\subset A_{p^{m+1}}$ gives canonical maps 
$\rho_m:\mathcal E_m\rightarrow \mathcal E_{m+1}$, and we can consider the discrete $\Lambda$-module 
\[ \mathcal E_\infty:=\dirlim_m\mathcal E_m, \]  
the direct limit being taken with respect to the maps $\rho_m$. 

\begin{proposition} \label{inclusion-E-prop}
There is an injection of $\Lambda$-modules $\mathcal E_\infty\hookrightarrow H^1_f(K_\infty,A)$.
\end{proposition}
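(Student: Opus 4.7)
The plan is to construct, for each $m\geq 0$, a $\Lambda_m$-linear injection $\mathcal E_m \longmono H^1_f(K_\infty, A)$, verify compatibility with the transition maps $\rho_m$, and pass to the direct limit to obtain the desired injection of $\Lambda$-modules.

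For every $m\geq 0$ I would define the map $\mathcal E_m \to H^1_f(K_\infty, A)$ as the composition
\[
\mathcal E_m \;\subset\; H^1_f(K_m, A_{p^m}) \xrightarrow{\;\iota_m\;} H^1_f(K_m, A) \xrightarrow{\;\res_{K_\infty/K_m}\;} H^1_f(K_\infty, A),
\]
where $\iota_m$ is induced by the natural inclusion $A_{p^m}\hookrightarrow A$. Injectivity of $\iota_m$ follows from Lemma \ref{isom-selmer-lemma}, which identifies $H^1_f(K_m, A_{p^m})$ with the $p^m$-torsion submodule $H^1_f(K_m, A)_{p^m}\subset H^1_f(K_m, A)$; injectivity of $\res_{K_\infty/K_m}$ is furnished by the Control Theorem (Theorem \ref{CT}), which in fact turns it into an isomorphism onto $H^1_f(K_\infty, A)^{\Gamma_m}$. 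Hence the composition is injective.

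Next I would check compatibility with the $\rho_m$. By construction $\rho_m$ is the composition of $\res_{K_{m+1}/K_m}$ with the map induced by $A_{p^m}\hookrightarrow A_{p^{m+1}}$, and the two inclusions $A_{p^m}\hookrightarrow A_{p^{m+1}}\hookrightarrow A$ compose to the single inclusion $A_{p^m}\hookrightarrow A$. Functoriality of restriction then yields $\res_{K_\infty/K_{m+1}}\circ\iota_{m+1}\circ\rho_m = \res_{K_\infty/K_m}\circ\iota_m$ on $\mathcal E_m$, so the family descends to a $\Lambda$-linear map $\mathcal E_\infty \to H^1_f(K_\infty, A)$. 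Injectivity of this limit map then follows from exactness of the direct limit functor on abelian groups, since
\[
\ker\bigl(\mathcal E_\infty \to H^1_f(K_\infty, A)\bigr) \;=\; \dirlim_m\ker\bigl(\mathcal E_m\to H^1_f(K_\infty, A)\bigr) \;=\; 0.
\]

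The argument is almost entirely formal, so I do not anticipate a substantive obstacle; the two non-trivial inputs (Lemma \ref{isom-selmer-lemma} and the Control Theorem) have already been established, and the remaining work is the diagrammatic check that $A_{p^m}\hookrightarrow A$ factors through $A_{p^{m+1}}$ in the expected way, which is essentially tautological.
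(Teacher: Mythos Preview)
Your proof is correct and follows essentially the same approach as the paper: both rely on Lemma \ref{isom-selmer-lemma} to identify $H^1_f(K_m,A_{p^m})$ with $H^1_f(K_m,A)_{p^m}$ and then use exactness of direct limits. The only cosmetic difference is that the paper passes through the chain $\dirlim_m\mathcal E_m\hookrightarrow\dirlim_m H^1_f(K_m,A_{p^m})\simeq\dirlim_m H^1_f(K_m,A)_{p^m}\hookrightarrow\dirlim_m H^1_f(K_m,A)$ directly, whereas you first build finite-level maps into $H^1_f(K_\infty,A)$ and invoke the Control Theorem; note that you only use the injectivity half of Theorem \ref{CT}, which already follows from Lemma \ref{vanishing-0-lemma}.
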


\begin{proof} By Lemma \ref{isom-selmer-lemma}, there are isomorphisms
\[ H^1_f(K_m,A_{p^m})\simeq H^1_f(K_m,A{)}_{p^m} \]
for all integers $m\geq0$. By definition, every $\mathcal E_m$ is a submodule of $H^1_f(K_m,A_{p^m})$, and then the (left) exactness of the direct limit gives injections
\[ \begin{split}
   \mathcal E_\infty=\dirlim_m\mathcal E_m&\longmono\dirlim_m H^1_f(K_m,A_{p^m})\simeq\dirlim_m H^1_f(K_m,A{)}_{p^m}\\
  &\longmono\dirlim_m H^1_f(K_m,A)=H^1_f(K_\infty,A), 
   \end{split} \]
which completes the proof. \end{proof}

Taking the Pontryagin dual, we get a compact $\Lambda$-module 
\[ \mathcal E_\infty^\vee:=\Hom_{\mathcal O_\p}^{\rm cont}(\mathcal E_\infty,F_\p/\mathcal O_\p)\simeq\invlim_m \mathcal E_m^\vee \] 
where, for each $m$, the module $\mathcal E_m^\vee:=\Hom_{\mathcal O_\p}^{\rm cont}(\mathcal E_m,F_\p/\mathcal O_\p)$ is the Pontryagin dual of $\mathcal E_m$. Proposition \ref{inclusion-E-prop} gives a surjection of $\Lambda$-modules 
\begin{equation} \label{pi}
\pi:\mathcal X_\infty\longepi\mathcal E_\infty^\vee.
\end{equation} 
In particular, there is also an injection of $\Lambda$-modules 
\begin{equation} \label{hom-inj-eq}
\Hom_{\Lambda}\bigl(\mathcal E_\infty^\vee,\Lambda\bigr)\longmono\Hom_{\Lambda}(\mathcal X_\infty,\Lambda)\simeq\hat H^1_f(K_\infty,{T}), 
\end{equation}
where the isomorphism on the right is a consequence of \eqref{selmer-iso1} and Proposition \ref{prop-selmer}. The next result describes the image of this map.

\begin{proposition} \label{hom-image-prop}
The image of \eqref{hom-inj-eq} is equal to $\mathcal H_\infty$.
\end{proposition}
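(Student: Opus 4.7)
The plan is to verify the mutual inclusions of the two $\Lambda$-submodules of $\hat H^1_f(K_\infty,T)$. Throughout, I use the explicit form of the composite isomorphism $\hat H^1_f(K_\infty,T)\simeq S_\infty\simeq\Hom_\Lambda(\mathcal X_\infty,\Lambda)$ from Propositions \ref{isom-tate-prop}--\ref{prop-selmer} and \eqref{selmer-iso1}: a class $x=(x_m)_m$ corresponds to the $\Lambda$-linear map $\tilde x$ whose level-$m$ component reads
\[
\tilde x_m\colon\mathcal X_m\longrightarrow\Lambda_m=\cO_\p[G_m],\qquad \varphi\longmapsto\sum_{g\in G_m}\langle g^{-1}x_m,\varphi\rangle_m\cdot g,
\]
with $\langle y,\varphi\rangle_m\in\cO_\p$ the natural Tate-module pairing characterized by $\langle y,\varphi\rangle_m\equiv p^n\tilde\varphi(\bar y^{(n)})\pmod{p^n\cO_\p}$, where $\bar y^{(n)}$ is the image of $y\in H^1_f(K_m,T)$ in $H^1_f(K_m,A)[p^n]\simeq H^1_f(K_m,A_{p^n})$ (Lemma \ref{isom-selmer-lemma}) and $\tilde\varphi$ is any lift of $\varphi$ to $F_\p$.

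For the inclusion $\mathcal H_\infty\subseteq$ image of \eqref{hom-inj-eq}, I fix a generator $\alpha=(\alpha_m)_m$ of $\mathcal H_\infty$ (Theorem \ref{prop3.5}) and a $\varphi\in\ker(\pi)$. Since $\varphi_m$ vanishes on $\mathcal E_m\ni\beta_m$ and $\beta_m$ is the image of $\alpha_m$ modulo $p^m$, the defining congruence forces $\langle\alpha_m,\varphi_m\rangle_m\in p^m\cO_\p$; the $\Lambda_m$-stability of $\ker(\pi_m)$ extends this to every $G_m$-twist, yielding $\tilde\alpha_m(\varphi_m)\in p^m\Lambda_m$ for all $m$. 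Via the isomorphism $\Lambda\simeq\cO_\p[\![T]\!]$ of \eqref{iwasawa-isom-eq} one checks $I_m+p^m\Lambda\subseteq\mathfrak m^m$ (using $(1+T)^{p^m}-1\in(p,T^{p^m})$), so Krull's intersection theorem in the complete Noetherian local ring $\Lambda$ gives $\tilde\alpha(\varphi)=0$ and $\tilde\alpha$ factors through $\pi$.

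For the reverse inclusion, take $\tilde y$ in the image with corresponding $y\in\hat H^1_f(K_\infty,T)$, and descend the vanishing $\tilde y|_{\ker\pi}=0$ to finite-level information. One first verifies that each $\rho_m\colon\mathcal E_m\to\mathcal E_{m+1}$ is injective---both the restriction from $K_m$ to $K_{m+1}$ and the map induced by $A_{p^m}\hookrightarrow A_{p^{m+1}}$ are injective on $H^1$ because $H^0(K_{m+1},A_{p^n})=0$ for all $n$ by Lemma \ref{vanishing-0-lemma}---so each $\mathcal E_\infty^\vee\twoheadrightarrow\mathcal E_m^\vee$ is surjective and Proposition \ref{C2} together with a direct coinvariants computation yields $(\mathcal E_\infty^\vee)_{\Gamma_m}\simeq\mathcal E_m^\vee$. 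Right exactness of $\Gamma_m$-coinvariants then forces the image of $\ker\pi$ in $\mathcal X_m$ to equal $\ker\pi_m$, so $\tilde y_m$ annihilates $\ker\pi_m$; a mod-$p^m$ unwinding of the pairing gives $y_m\in\mathcal H_m+p^m H^1_f(K_m,T)$ for every $m$. Invoking Theorem \ref{prop3.5} (freeness of $\mathcal H_\infty$ of rank $1$), Corollary \ref{freeness} (freeness of $\hat H^1_f(K_\infty,T)$), and the completeness of $\Lambda$, these approximations can be patched along the tower to produce $y\in\mathcal H_\infty$.

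The main obstacle is this second direction: one must not only descend the infinite-level vanishing using the non-trivial identification $(\mathcal E_\infty^\vee)_{\Gamma_m}\simeq\mathcal E_m^\vee$, but also extract an actual membership from the mod-$p^m$ approximations via a Nakayama/completeness argument that genuinely uses the rank-$1$ freeness of $\mathcal H_\infty$ inside the free $\Lambda$-module $\hat H^1_f(K_\infty,T)$.
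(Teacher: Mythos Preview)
The paper itself omits the proof entirely, calling it ``a more or less tautological, albeit somewhat tedious, diagram chasing,'' so there is nothing to compare against; what matters is whether your argument stands on its own.

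Your forward inclusion $\mathcal H_\infty\subset\mathrm{image}$ is essentially correct (indeed stronger than you state: for $\varphi\in\ker(\pi)$ one has $\varphi_m(\bar\alpha_m^{(n)})=0$ for \emph{every} $n$, since the image of $\bar\alpha_m^{(n)}$ under restriction to $K_\infty$ lies in $\mathcal E_\infty$ by Corollary~\ref{coro3.4}; so $\langle\alpha_m,\varphi_m\rangle_m=0$ on the nose and no Krull intersection is needed).

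The reverse inclusion, however, has a genuine gap. The claimed isomorphism $(\mathcal E_\infty^\vee)_{\Gamma_m}\simeq\mathcal E_m^\vee$ cannot hold. First, Proposition~\ref{C2} does not apply as stated: it requires the module being dualized to be compact, whereas $\mathcal E_\infty$ is discrete. More decisively, the conclusion is false: each $\mathcal E_m$ is a finite $p^m$-torsion group, so $\mathcal E_m^\vee$ is finite. If your identification held, then for any $y$ in the image of \eqref{hom-inj-eq} your own argument would give that the $\cO_\p$-linear functional $\langle y_m,\cdot\rangle:\mathcal X_m\to\cO_\p$ factors through $\mathcal X_m/\ker(\pi_m)\simeq\mathcal E_m^\vee$. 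But any $\cO_\p$-linear map from a finite torsion module to $\cO_\p$ is zero, hence $y_m=0$ for all $m$ and $y=0$. This contradicts the forward inclusion, since $\mathcal H_\infty\neq 0$ by Theorem~\ref{prop3.5}. The underlying reason is that $\mathcal E_\infty^{\Gamma_m}$ is strictly larger than $\mathcal E_m$: for $n>m$ the $\Gal(K_n/K_m)$-invariants of $\mathcal E_n$ already contain elements of order $p^n$, while $\mathcal E_m$ has exponent $p^m$.

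Consequently your step ``the image of $\ker\pi$ in $\mathcal X_m$ equals $\ker\pi_m$'' fails, and with it the mod-$p^m$ approximation $y_m\in\mathcal H_m+p^mH^1_f(K_m,T)$. The patching step you sketch at the end is therefore moot, and in any case is left too vague to assess (one would need to control how the approximating elements of $\mathcal H_m$ behave under corestriction). The reverse inclusion needs a different idea---for instance, working directly with the description $\Hom_\Lambda(\mathcal E_\infty^\vee,\Lambda)\simeq\varprojlim_m\Hom_{\cO_\p}\bigl((\mathcal E_\infty^\vee)_{\Gamma_m},\cO_\p\bigr)$ and identifying the latter coinvariants correctly, rather than with $\mathcal E_m^\vee$.
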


\begin{proof} A more or less tautological, albeit somewhat tedious, diagram chasing. We omit the details. \end{proof}

It follows from Proposition \ref{hom-image-prop} that there is an isomorphism  
\[ \Hom_{\Lambda}\bigl(\mathcal E_\infty^\vee,\Lambda\bigr)\simeq\mathcal H_\infty \] 
of $\Lambda $-modules. In particular, Theorem \ref{prop3.5} implies that $\mathcal E^\vee_\infty$ has rank $1$ over $\Lambda$.

\section{The Euler system argument} \label{euler-sec}

The aim of this section is to prove Theorem \ref{main-thm}; we restate it below.
 
\begin{theorem} \label{thm4.1}
The rank of $\mathcal X_\infty$ over $\Lambda$ is $1$. 
\end{theorem}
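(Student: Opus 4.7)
The proof will proceed by establishing matching lower and upper bounds on the $\Lambda$-rank of $\mathcal X_\infty$. For the lower bound, the ingredients are already assembled in the paper: Theorem \ref{prop3.5} gives that $\mathcal H_\infty$ is free of rank $1$ over $\Lambda$, and Proposition \ref{hom-image-prop} identifies it with $\Hom_\Lambda(\mathcal E_\infty^\vee,\Lambda)$, so $\mathcal E_\infty^\vee$ has $\Lambda$-rank at least $1$. Since the surjection $\pi$ in \eqref{pi} exhibits $\mathcal E_\infty^\vee$ as a quotient of $\mathcal X_\infty$, this immediately yields $\mathrm{rank}_\Lambda \mathcal X_\infty \geq 1$.

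The substantive content is the upper bound $\mathrm{rank}_\Lambda \mathcal X_\infty \leq 1$, equivalently the assertion that $\ker(\pi)$ is $\Lambda$-torsion. The plan is to execute a $\Lambda$-adic Kolyvagin Euler system argument modelled on Bertolini's proof of \cite[Theorem A]{Ber1}, now carried out for Heegner cycles instead of Heegner points. First I would identify a supply of Kolyvagin primes $\ell$ (inert in $K$, prime to $Np$, with $p^n$ dividing both $\ell+1$ and $a_\ell$, and with $\Frob_\ell$ acting in a prescribed way on $A_{p^n}$); non-exceptionality of $V_{f,\p}$ together with the congruences in Assumption \ref{ass} ensures that the residual Galois representation has sufficiently large image for Chebotarev to produce enough such primes. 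Next, using the Heegner cycles $y_{p^m\ell_1\cdots\ell_r}$ of composite conductor $p^m\ell_1\cdots\ell_r$ together with the Euler system norm relations of Nekov\'a\v{r}, one constructs Kolyvagin derivative classes $\kappa(m,\ell_1\cdots\ell_r)\in H^1(K_m,A_{p^n})$ satisfying the usual local axioms: lying in the Bloch--Kato finite part away from the auxiliary primes and having controlled, nontrivial singular residues at the primes dividing the auxiliary conductor.

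These classes would then be assembled into $m$-compatible sequences over the tower $\{K_m\}$ and used, in conjunction with local Tate duality at Kolyvagin primes, to produce enough $\Lambda$-linear functionals on $\mathcal X_\infty$ to force any element of $\ker(\pi)$ to be $\Lambda$-torsion. Concretely, supposing for contradiction that $\ker(\pi)$ contains a free $\Lambda$-submodule, one pairs a would-be $\Lambda$-independent element against the Kolyvagin classes $\kappa(m,\ell)$ for $\ell$ selected by Chebotarev, and derives a contradiction from the non-vanishing of the base Heegner cycle (Howard's theorem, as already exploited in Theorem \ref{prop3.5}) combined with the local duality computation at $\ell$. The control theorem (Theorem \ref{CT}) and Corollary \ref{coro2.2} will be used to shuttle between the finite-level Selmer groups $\mathcal X_m$ and their $\Lambda$-adic limit.

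The main obstacle will be this last step: ensuring that the Kolyvagin classes truly control $\mathcal X_\infty$ modulo the image of the Heegner subspace in a manner compatible with the $\Lambda$-module structure. This amounts to verifying that the local Tate pairings at Kolyvagin primes are $\Lambda$-non-degenerate on the relevant quotients, which in turn requires the non-exceptional hypothesis on $V_{f,\p}$ (to keep the residual representation absolutely irreducible with large image) and the unit-root condition $a_p'\in\cO_\p^\times$ from Assumption \ref{ass} (to guarantee that the Frobenius-invariant decompositions of $A_{p^n}$ at Kolyvagin primes behave as in the classical Heegner-point setting). Once this non-degeneracy is established, the torsion-ness of $\ker(\pi)$ follows by the standard Kolyvagin inductive argument transposed to the Iwasawa-algebra level.
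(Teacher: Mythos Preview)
Your proposal is correct in outline and follows essentially the same route as the paper: the lower bound via the surjection $\pi$ onto $\mathcal E_\infty^\vee$, and the upper bound by a $\Lambda$-adic Kolyvagin argument transported from \cite{Ber1}. Two points are worth flagging when you compare with the paper's execution.

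First, the paper needs only depth-one Kolyvagin classes: a single auxiliary prime $\ell$ per level $m$, giving classes $d(\ell_m)\in H^1(K_m,T/p^mT)$ built from cycles of conductor $p^{m+1}\ell_m$. Your composite-conductor classes $\kappa(m,\ell_1\cdots\ell_r)$ are not required for the rank statement (they enter in the sequel, for the torsion structure).

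Second, the key technical device you leave implicit is the complex-conjugation eigenspace reduction. The paper does not argue by contradiction on a free submodule of $\ker(\pi)$; instead it fixes $x\in\mathcal X_\infty$ with $\tau(x)=\epsilon x$ and $\pi(x)\neq 0$, and shows every $y\in\ker(\pi)^{-\epsilon}$ is torsion (Lemmas \ref{generator-lemma} and \ref{eigen-torsion-lemma} reduce the general case to this). The sign separation is what makes the local-duality step go through: the isomorphism $\phi_{m,\ell}$ is $\tau$-antiequivariant, so $\partial_{\ell_m}(d(\ell_m))$ lands in the $(-\epsilon)$-eigenspace while $\res_{\ell_m}(\alpha_m)$ generates the $(\epsilon)$-part. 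This forces the auxiliary module $V(\ell_\infty)$---which surjects onto $\Lambda x\oplus\Lambda y$---to be $\Lambda$-cyclic, hence $\Lambda y$ is torsion. Your sketch of ``pairing and deriving a contradiction'' would not run without this sign mechanism; it is the linchpin, not a detail.
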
 

By Theorem \ref{prop3.5}, the $\Lambda$-module $\mathcal H_\infty$ of Heegner cycles is free of rank $1$. Furthermore, the discussion of \S \ref{sec-inj-modules} shows that the $\Lambda$-rank of $\mathcal E_\infty^\vee$ is $1$. Recall that, by Corollary \ref{coro-fin-gen}, the $\Lambda$-module $\mathcal X_\infty$ is finitely generated. The surjection $\pi:\mathcal X_\infty\twoheadrightarrow \mathcal E_\infty^\vee$ of $\Lambda$-modules introduced in \eqref{pi} is $\Gal(K/\Q)$-equivariant. Since $\mathcal E_\infty^\vee$ has rank $1$ over $\Lambda$, proving Theorem \ref{thm4.1} is equivalent to showing that the $\Lambda$-module $\ker(\pi)$ is torsion. Let $\tau$ be the generator of $\Gal(K/\Q)$. In order to prove Theorem \ref{thm4.1} it suffices to show that every $y\in\ker(\pi)$ lying in an eigenspace for $\tau$ (i.e., such that $\tau(y)=\pm y$) is $\Lambda$-torsion.

As in \S \ref{anticyclotomic-subsec}, let $\gamma_\infty$ be a topological generator of $G_\infty$. We need two lemmas.

\begin{lemma} \label{generator-lemma}
There is a generator $\omega$ of the ideal $(\gamma_\infty-1)\Lambda$ such that $\omega^\tau=-\omega$.
\end{lemma}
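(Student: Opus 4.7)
The plan is to exhibit an explicit generator $\omega$ and verify by direct computation. Since $K_\infty/\mathbb Q$ is (generalized) dihedral by construction of the anticyclotomic $\mathbb Z_p$-extension, the non-trivial element $\tau\in\Gal(K/\mathbb Q)$ acts on $G_\infty=\Gal(K_\infty/K)$ by inversion, i.e., $\tau\sigma\tau^{-1}=\sigma^{-1}$ for every $\sigma\in G_\infty$. This action extends by $\cO_\p$-linearity and continuity to a ring involution of $\Lambda=\cO_\p[\![G_\infty]\!]$, and in particular $\gamma_\infty^\tau=\gamma_\infty^{-1}$.

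The element I would propose is
\[ \omega:=\gamma_\infty-\gamma_\infty^{-1}\in\Lambda. \]
First I would check the symmetry: applying $\tau$ exchanges $\gamma_\infty$ and $\gamma_\infty^{-1}$, so
\[ \omega^\tau=\gamma_\infty^{-1}-\gamma_\infty=-\omega, \]
which is the required eigenproperty.

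Next I would verify that $\omega$ generates the augmentation ideal $(\gamma_\infty-1)\Lambda=I_\infty$. The key observation is the factorization
\[ \omega=\gamma_\infty-\gamma_\infty^{-1}=\gamma_\infty^{-1}(\gamma_\infty-1)(\gamma_\infty+1). \]
Both $\gamma_\infty^{-1}$ and $\gamma_\infty+1$ belong to $\Lambda^\times$: the first is obviously a unit, while for the second it suffices to note that its image in the residue field $\Lambda/\mathfrak n\simeq\cO_\p/\p$ (with $\mathfrak n$ the maximal ideal of $\Lambda$, generated by $\gamma_\infty-1$ together with a uniformizer of $\cO_\p$) equals $2$, and $2$ is a unit modulo $\p$ because $p$ is odd by Assumption \ref{ass}. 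Since $\Lambda$ is a local ring, this shows $\gamma_\infty+1\in\Lambda^\times$, and hence $\omega=u\cdot(\gamma_\infty-1)$ for the unit $u=\gamma_\infty^{-1}(\gamma_\infty+1)$, giving $\omega\Lambda=(\gamma_\infty-1)\Lambda$.

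There is no genuine obstacle here; the only mild point to be careful about is invoking the hypothesis $p\ne2$ (built into Assumption \ref{ass} via $p\nmid 6N\phi(N)h_K(k-2)!$) to ensure that $\gamma_\infty+1$ is a unit, and remembering that the $\tau$-action on $\Lambda$ comes from conjugation of $G_\infty$ in $\Gal(K_\infty/\mathbb Q)$, which inverts $G_\infty$ because the extension is dihedral.
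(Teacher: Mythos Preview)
Your proof is correct and follows essentially the same route as the paper: both take $\omega=\gamma_\infty-\gamma_\infty^{-1}$, verify $\omega^\tau=-\omega$ from $\gamma_\infty^\tau=\gamma_\infty^{-1}$, and use the factorization $\omega=\gamma_\infty^{-1}(\gamma_\infty+1)(\gamma_\infty-1)$ together with $p\neq2$ to conclude that $\gamma_\infty+1$ is a unit. The only cosmetic difference is that the paper checks invertibility of $\gamma_\infty+1$ via the power series isomorphism $\gamma_\infty+1\mapsto 2+X$, whereas you argue directly in the local ring by looking at the residue field.
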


\begin{proof} Define $\omega:=\gamma_\infty-\gamma_\infty^{-1}\in\Lambda$. Since $\gamma_\infty^\tau=\gamma_\infty^{-1}$, one has $\omega^\tau=-\omega$. On the other hand, one can write $\omega=\gamma_\infty^{-1}(\gamma_\infty+1)(\gamma_\infty-1)$. The isomorphism \eqref{iwasawa-isom-eq} sends $\gamma_\infty+1$ to $2+X$, which is invertible in $\cO_\p[\![X]\!]$ because $p\not=2$, and it follows that $\omega$ is a generator of $(\gamma_\infty-1)\Lambda$. \end{proof}

\begin{lemma} \label{eigen-torsion-lemma}
Let $\epsilon\in\{\pm\}$ and suppose that every $y\in\ker(\pi)^\epsilon$ is $\Lambda$-torsion. Then every $y\in\ker(\pi)$ lying in an eigenspace for $\tau$ is $\Lambda$-torsion.
\end{lemma}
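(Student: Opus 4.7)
The plan is to use the element $\omega$ of Lemma \ref{generator-lemma} to swap eigenspaces for $\tau$. The key structural observation is that because $K_\infty/\Q$ is (generalized) dihedral, the nontrivial element $\tau \in \Gal(K/\Q)$ acts on $G_\infty$ by inversion $\gamma_\infty \mapsto \gamma_\infty^{-1}$, and hence acts on the Iwasawa algebra $\Lambda$ by a ring involution $\lambda \mapsto \lambda^\tau$. Consequently, the action of $\tau$ on any $\Lambda$-module arising from a Galois module (such as $\mathcal X_\infty$ and its submodule $\ker(\pi)$) is \emph{semi-linear}: one has $\tau(\lambda \cdot y) = \lambda^\tau \cdot \tau(y)$. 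Note that $\ker(\pi)$ is itself stable under $\tau$ since $\pi$ was observed to be $\Gal(K/\Q)$-equivariant.

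First, I would fix the sign $\epsilon$ given by hypothesis and pick an arbitrary $y \in \ker(\pi)$ with $\tau(y) = -\epsilon y$; the case $\tau(y) = \epsilon y$ is covered directly by the assumption. Next I would form $\omega \cdot y$, where $\omega \in \Lambda$ is the generator of $(\gamma_\infty-1)\Lambda$ provided by Lemma \ref{generator-lemma}, so $\omega^\tau = -\omega$. Using semi-linearity one computes
\[
\tau(\omega \cdot y) \;=\; \omega^\tau \cdot \tau(y) \;=\; (-\omega)\cdot(-\epsilon y) \;=\; \epsilon\,(\omega \cdot y),
\]
so $\omega \cdot y \in \ker(\pi)^{\epsilon}$.

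By the hypothesis of the lemma, $\omega \cdot y$ is then $\Lambda$-torsion, so there exists $\lambda \in \Lambda \setminus \{0\}$ with $\lambda\omega \cdot y = 0$. Since $\Lambda \simeq \cO_\p[\![X]\!]$ is an integral domain and both $\lambda$ and $\omega$ are nonzero, the product $\lambda\omega$ is a nonzero annihilator of $y$, showing that $y$ is $\Lambda$-torsion.

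The argument is essentially a one-line trick and I do not expect any genuine obstacle; the only point that must be tracked carefully is the semi-linearity of the $\tau$-action on $\Lambda$-modules, which is exactly what forces $\omega$ to be an anti-invariant generator of the augmentation ideal (the role played by Lemma \ref{generator-lemma}). It is crucial that the $\Lambda$-module structure on $\ker(\pi)$ is compatible with the Galois action in this semi-linear sense, and that $\Lambda$ is a domain so that $\lambda \omega \ne 0$.
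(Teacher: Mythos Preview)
Your proof is correct and follows exactly the same route as the paper: multiply a $(-\epsilon)$-eigenvector by the anti-invariant generator $\omega$ of Lemma~\ref{generator-lemma} to land in $\ker(\pi)^\epsilon$, then use that $\Lambda$ is a domain. You have simply made explicit the semi-linearity $\tau(\omega y)=\omega^\tau\tau(y)$ and the integrality step that the paper leaves implicit.
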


\begin{proof} By Lemma \ref{generator-lemma}, there is a generator $\omega$ of $(\gamma_\infty-1)\Lambda$ such that $\omega^\tau=-\omega$. Now pick $y\in\ker(\pi)^{-\epsilon}$. Then $\omega y\in\ker(\pi)$ and $\tau(\omega y)=\epsilon\omega y$, so $\omega y\in\ker(\pi)^\epsilon$. It follows that $\omega y$ is $\Lambda$-torsion, hence $y$ is $\Lambda$-torsion as well. \end{proof} 

Choose an element $x\in \mathcal X_\infty$ such that $\tau(x)=\epsilon x$ for some $\epsilon\in\{\pm\}$ and $\pi(x)\neq0$. Thanks to Lemma \ref{eigen-torsion-lemma}, in order to prove Theorem \ref{thm4.1} it is enough to show that every $y\in\ker(\pi)^{-\epsilon}$ is $\Lambda$-torsion. To do this, we will adapt the $\Lambda$-adic Euler system argument of \cite{Ber1}, as explained in the next subsections. 

\subsection{Kolyvagin primes} 

Denote by 
\[ \rho_m:G_\Q\longrightarrow \Aut(A_{p^m})\simeq\GL_2(\cO_\p/p^m\cO_\p) \] 
the Galois representation on $A_{p^m}$ and let $K(A_{p^m})$ be the composite of $K$ and the field cut out by $\rho_m$; in other words, $K(A_{p^m})$ is the composite of $K$ and $\bar\Q^{\ker(\rho_m)}$. In particular, $K(A_{p^m})$ is Galois over $\Q$. 

\begin{definition} \label{kolyvagin-dfn}
A prime number $\ell$ is a \emph{Kolyvagin prime} relative to $p^m$ if $\ell\nmid NDp$ and $\Frob_\ell=[\tau]$ in $\Gal(K(A_{p^m})/\Q)$. 
\end{definition} 

In particular, Kolyvagin primes are inert in $K$. Define
\[ R_m:=\cO_\p/p^m\cO_\p[G_m]. \]
Likewise, set $D_m:=\Gal(K_m/\Q)$, 
let $\tilde R_m$ be the quotient of $\Lambda_m$ (and of $\cO_\p[D_m]$) given by 
\[ \tilde R_m:=\cO_\p/p^m\cO_\p[D_m] \]
and, for each choice of sign $\pm$, let $R_m^{(\pm)}$ be the $\tilde R_m$-module $R_m$ with $\tau$ acting on group-like elements by $\gamma^\tau:=\pm \gamma^{-1}$. In particular, $R_m^{(+)}$ corresponds to the linear extension of the natural action of $\tau$ on $G_m$.

Let $\ell$ be a Kolyvagin prime relative to $p^m$ and let $\lambda$ be the unique prime of $K$ 
above $\ell$; let $K_\lambda$ be the completion of $K$ at $\lambda$. Evaluation at Frobenius gives a $\Gal(K/\Q)$-equivariant isomorphism 
\[ H^1_f(K_\lambda,A_{p^m})\simeq A_{p^m} \] 
for every $m\geq1$ (see, e.g., \cite[Lemma 6.8]{BesDM}). Since $\lambda$ splits completely in $K_m$, for each choice of sign $\pm$ we obtain an isomorphism of $\tilde R_m$-modules 
\begin{equation} \label{lemma-kol}
H^1_f(K_{m,\ell},A_{p^m})^{(\pm)}\simeq R_m^{(\pm)}.
\end{equation}
If $\ell$ is a Kolyvagin prime relative to $p^m$ and $M\in\bigl\{V, T, A, A_{p^m}, T/p^mT\bigr\}$ then we let
\[ \res_\ell=\res_{m,\ell}:H^1(K_m,M)\longrightarrow H^1(K_{m,\ell},M) \] 
be the direct sum $\oplus_{\lambda|\ell}\res_\lambda$ of the local restrictions $\res_\lambda$ where the sum ranges over all the primes $\lambda$ of $K_m$ above $\ell$. We will also use the same symbol for the restriction of $\res_\ell$ to subgroups of $H^1(K_m,M)$. 

\subsection{Action of complex conjugation} \label{sec-complex-conj}

In this subsection we study the action of $\Gal(K/\Q)$ on Selmer groups. These results will be used in \S \ref{sec-families} to show the existence of suitable families of Kolyvagin primes. 

Define $\tilde\Lambda:=\cO_\p[\![D_\infty]\!]$ to be the Iwasawa algebra of $D_\infty:=\Gal(K_\infty/\Q)$ with coefficients in $\cO_\p$. As before, for each sign $\pm$ write $\Lambda^{(\pm)}$ for the ring $\Lambda$ viewed as a module over $\tilde\Lambda$ via the action of $\tau$ given by $\gamma^\tau=\pm\gamma^{-1}$ for all $\gamma\in G_\infty$. In particular, $\Lambda^{(+)}$ corresponds to the linear extension of the natural action of $\tau$ on $G_\infty$.

The canonical action of $\tau$ on $\mathcal X_\infty$ makes it into a $\tilde\Lambda$-module. Recall the element $x\in \mathcal X_\infty$ chosen at the beginning of this section such that $\pi(x)\neq0$, where $\pi$ is the map in \eqref{pi}, and $\tau(x)=\epsilon x$ for some $\epsilon\in\{\pm\}$. Now pick an element $y\in\ker(\pi)^{-\epsilon}$ and consider the surjection of $\tilde\Lambda$-modules
\[ \Lambda^{(\epsilon)}\oplus\Lambda^{(-\epsilon)}\longepi\Lambda x\oplus\Lambda y\subset\mathcal X_\infty\oplus\mathcal X_\infty,\qquad(\xi,\eta)\longmapsto(\xi x,\eta y). \] 
Since $\mathcal E_\infty^\vee$ is torsion-free, $\ker(\pi)\cap \Lambda x=\{0\}$, hence $\Lambda x\cap\Lambda y=\{0\}$. Therefore the canonical map of $\tilde\Lambda$-modules $\Lambda x\oplus\Lambda y\rightarrow \mathcal X_\infty$ given by the sum is injective. Composing the last two maps, we get a map of $\tilde\Lambda$-modules 
\begin{equation} \label{theta-eq}
\vartheta:\Lambda^{(\epsilon)}\oplus\Lambda^{(-\epsilon)}\longrightarrow\mathcal X_\infty.
\end{equation}
that sends $(\alpha,\beta)$ to $\alpha x+\beta y$.

\begin{lemma} \label{dual-p^m-lemma}
For every $m\geq1$ there is a canonical isomorphism
\[ \mathcal X_m/p^m\mathcal X_m\simeq H^1_f(K_m,A_{p^m})^\vee. \]
\end{lemma}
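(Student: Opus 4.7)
The plan is to deduce this from Pontryagin duality together with Lemma \ref{isom-selmer-lemma}. The key observation is that the displayed isomorphism essentially swaps ``mod $p^m$'' on the compact (Iwasawa-theoretic) side with ``$p^m$-torsion'' on the discrete (cohomological) side, which is formal from Pontryagin duality, and then Lemma \ref{isom-selmer-lemma} identifies $p^m$-torsion in $H^1_f(K_m,A)$ with $H^1_f(K_m,A_{p^m})$.

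More concretely, I would proceed as follows. By reflexivity of Pontryagin duality on compact / discrete modules of $p$-power type, we have $\mathcal X_m^\vee\simeq H^1_f(K_m,A)$. A continuous $\cO_\p$-homomorphism $\mathcal X_m/p^m\mathcal X_m\longrightarrow F_\p/\cO_\p$ is the same as a continuous $\cO_\p$-homomorphism $\mathcal X_m\longrightarrow F_\p/\cO_\p$ annihilated by $p^m$, so
\[
\bigl(\mathcal X_m/p^m\mathcal X_m\bigr)^{\!\vee}\;\simeq\;\mathcal X_m^\vee[p^m]\;\simeq\;H^1_f(K_m,A)_{p^m}.
\]
(This is just Lemma \ref{C1} applied to the compact $\cO_\p$-module $\mathcal X_m$ with $I=(p^m)$.) Since $\mathcal X_m$ is finitely generated over $\cO_\p$, the quotient $\mathcal X_m/p^m\mathcal X_m$ is a finite (hence reflexive) abelian group, so dualizing the above isomorphism back gives
\[
\mathcal X_m/p^m\mathcal X_m\;\simeq\;\bigl(H^1_f(K_m,A)_{p^m}\bigr)^{\!\vee}.
\]
Finally, Lemma \ref{isom-selmer-lemma} provides a canonical isomorphism $H^1_f(K_m,A)_{p^m}\simeq H^1_f(K_m,A_{p^m})$; combining this with the previous display yields the desired isomorphism $\mathcal X_m/p^m\mathcal X_m\simeq H^1_f(K_m,A_{p^m})^\vee$.

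There is no genuine obstacle here: the statement is a packaging lemma that reconciles the compact Iwasawa-theoretic description of $\mathcal X_m$ with Selmer groups of the finite Galois modules $A_{p^m}$, and every ingredient has already been prepared. The only points to take care of are that Pontryagin duality is applied in the correct direction (finite modules are reflexive, so the double-dualization is harmless), and that the isomorphism of Lemma \ref{isom-selmer-lemma} is canonical enough to make the whole chain canonical; both are immediate from the setup in \S\ref{sec-Bloch-Kato} and the proof of Lemma \ref{isom-selmer-lemma}.
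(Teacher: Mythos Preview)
Your proof is correct and follows essentially the same route as the paper: Pontryagin duality to swap ``mod $p^m$'' with ``$p^m$-torsion'', then Lemma~\ref{isom-selmer-lemma}. The paper is slightly more direct: it applies Lemma~\ref{C1} with $M=H^1_f(K_m,A)$ (so $M^\vee=\mathcal X_m$), obtaining $\mathcal X_m/p^m\mathcal X_m\simeq\bigl(H^1_f(K_m,A)_{p^m}\bigr)^\vee$ in one step, without passing through double duality. Note that your parenthetical is slightly off: Lemma~\ref{C1} applied to $M=\mathcal X_m$ yields $\mathcal X_m^\vee/p^m\mathcal X_m^\vee\simeq\bigl(\mathcal X_m[p^m]\bigr)^\vee$, not the identity $(\mathcal X_m/p^m\mathcal X_m)^\vee\simeq\mathcal X_m^\vee[p^m]$ you state---but your direct argument for the latter (a homomorphism out of a quotient is a homomorphism killed by $p^m$) is correct and needs no appeal to Lemma~\ref{C1}.
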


\begin{proof} By definition, $\mathcal X_m$ is the Pontryagin dual of $H^1_f(K_m,A)$. Applying Lemma \ref{C1} with $M=H^1_f(K_m,A)$ and $I=(p^m)$, we see that $\mathcal X_m/p^m\mathcal X_m$ is isomorphic to the Pontryagin dual of $H^1_f(K_m,A{)}_{p^m}$. But $H^1_f(K_m,A{)}_{p^m}$ is isomorphic to $H^1_f(K_m,A_{p^m})$ by Lemma \ref{isom-selmer-lemma}, and the claim follows. \end{proof}

For every integer $m\geq1$ consider the surjection  
\begin{equation} \label{p_m-eq}
p_m:\mathcal X_\infty\longepi (\mathcal X_\infty{)}_{\Gamma_m}\simeq\mathcal X_m\longepi\mathcal X_m/p^m\mathcal X_m\simeq H^1_f(K_m,A_{p^m})^\vee, 
\end{equation}
where the first isomorphism comes from Corollary \ref{coro2.2} and the second from Lemma \ref{dual-p^m-lemma}. Let us define the following $\tilde R_m$-submodules of $H^1_f(K_m,A_{p^m})^\vee$: 
\[ \begin{split}
   Z_m&:=\bigl((p_m\circ\vartheta)(\Lambda^{(\epsilon)}\oplus\{0\}))\bigr)\cap\big((p_m\circ\vartheta)(\{0\}\oplus \Lambda^{(-\epsilon)})\big),\\
  W_m^{(\epsilon)}&:=\big((p_m\circ\vartheta)(\Lambda^{(\epsilon)}\oplus\{0\}) \big)/Z_m,\\
  W_m^{(-\epsilon)}&:= \big((p_m\circ\vartheta)(\{0\}\oplus \Lambda^{(-\epsilon)})\big)/Z_m.
   \end{split} \]
Now set
\[ \Sigma_m:=\Big(H^1_f(K_m,A_{p^m})^\vee\big/Z_m\Big)^{\!\vee}, \] 
so that there is a canonical identification $\Sigma_m^\vee=H^1_f(K_m,A_{p^m})^\vee/Z_m$. Since the submodule $Z_m$ is closed in $H^1_f(K_m,A_{p^m})^\vee$, Pontryagin duality applied to $H^1_f(K_m,A_{p^m})$ yields a natural injection $\Sigma_m\hookrightarrow H^1_f(K_m,A_{p^m})$ of $\tilde R_m$-modules, which we shall often view as an inclusion. Therefore we obtain a chain of maps of $\tilde R_m$-modules 
\begin{equation} \label{chain-eq}
\Lambda^{(\epsilon)}\oplus\Lambda^{(-\epsilon)}\longepi W_m^{(\epsilon)}\oplus W_m^{(-\epsilon)}\longmono\Sigma_m^\vee, 
\end{equation}
where the surjection is induced by $p_m\circ\vartheta$ and the injection is given by the sum. By construction, the composition in \eqref{chain-eq} factors through the surjection $\Lambda^{(\epsilon)}\oplus\Lambda^{(-\epsilon)}\twoheadrightarrow R^{(\epsilon)}_m\oplus R^{(-\epsilon)}_m$. Let
\[ \vartheta_m:R^{(\epsilon)}_m\oplus R^{(-\epsilon)}_m\longepi W_m^{(\epsilon)}\oplus W_m^{(-\epsilon)}\longmono\Sigma_m^\vee \]
be the resulting map of $\tilde R_m$-modules; if $\bar x$ and $\bar y$ denote the images of $x$ and $y$ in $\Sigma_m^\vee$ then $\vartheta_m((\alpha,\beta))=\alpha\bar x+\beta\bar y$. Since there is a non-canonical isomorphism of $\cO_\p$-modules $R_m^{(\pm)}\simeq(\cO_\p/p^m\cO_\p)^{p^m}$, it turns out that $R_m^{(\pm)}$ is (non-canonically) isomorphic to $\bigl(R_m^{(\pm)}\bigr)^\vee$. Fix isomorphisms
\begin{equation} \label{i_m}
i_m^{(\pm)}:\bigl(R_m^{(\pm)}\bigr)^\vee\overset\simeq\longrightarrow R_m^{(\pm)}
\end{equation}
of $R_m$-modules and set $i_m:=i_m^{(\epsilon)}\oplus i_m^{(-\epsilon)}$. Composing the Pontryagin dual $\vartheta_m^\vee$ of $\vartheta_m$ with $i_m$, we get a map $i_m\circ\vartheta_m^\vee$ of $\tilde R_m$-modules that we still denote by  
\begin{equation} \label{theta_m-dual}
\vartheta_m^\vee:\Sigma_m\longrightarrow R_m^{(\epsilon)}\oplus R_m^{(-\epsilon)}. 
\end{equation}
If $\overline{\Sigma}_m:=\Sigma_m/\ker(\vartheta_m^\vee)$ then there is an injection $\bar\vartheta_m^\vee:\overline{\Sigma}_m\longmono  R_m^{(\epsilon)}\oplus R_m^{(-\epsilon)}$ of $\tilde R_m$-modules. Define $\overline{\Sigma}_m^{(\epsilon)}:=(\bar{\vartheta}_m^{\vee})^{-1}\bigl(R_m^{(\epsilon)}\oplus\{0\}\bigr)$ and $\overline{\Sigma}_m^{(-\epsilon)}:=(\bar{\vartheta}_m^{\vee})^{-1}\bigl(\{0\}\oplus R_m^{(-\epsilon)}\bigr)$. It follows that there is a splitting 
\begin{equation} \label{eq-Sigma_m}
\overline{\Sigma}_m=\overline{\Sigma}_m^{(\epsilon)}\oplus \overline{\Sigma}_m^{(-\epsilon)}
\end{equation}
of $\tilde R_m$-modules. Taking $G_m$-invariants, we obtain an injection 
\[ \bigl(\overline{\Sigma}_m^{(\pm)}\bigr)^{G_m}\longmono\bigl(R_m^{(\pm)}\bigr)^{G_m}\simeq\cO_\p/p^m\cO_\p \]
of $\cO_\p/p^m\cO_\p$-modules, and the structure theorem for finitely generated $\cO_\p$-modules implies that $\bigl(\overline{\Sigma}_m^{(\pm)}\bigr)^{G_m}\simeq\cO_\p/p^{m^{(\pm)}}\cO_\p$ for suitable integers $m^{(\pm)}\leq m$.

\subsection{Compatibility of the maps} \label{sec-duals} 

In order to ensure compatibility of the various maps appearing in the previous subsection as $m$ varies, in the sequel it will be useful to make a convenient choice of the isomorphism $i_m^{(\epsilon)}$ introduced in \eqref{i_m}.  
 
Let 
\[ \pi_m:H_f^1(K_m,A_{p^m})^\vee\longepi\mathcal E_m^\vee \] 
be the dual of the inclusion $\mathcal E_m\subset H_f^1(K_m,A_{p^m})$; since $H_f^1(K_m,A_{p^m})$ is discrete, the map $\pi_m$ is surjective. Since $y\in\ker(\pi)$, we have $\pi_m(Z_m)=0$, so there is a surjection $\bar\pi_m:\Sigma_m^\vee \twoheadrightarrow \mathcal E_m^\vee$ showing that $\mathcal E_m$ is actually a submodule of $\Sigma_m$. Since $(\bar\pi_m\circ \vartheta_m)\bigl(\{0\}\oplus R_m^{(-\epsilon)}\bigr)=\{0\}$, again because $y\in \ker(\pi)$, the dual of $\bar\pi_m\circ \vartheta_m$ factors through a map $\tilde\psi_m:\mathcal E_m\rightarrow\bigl(R_m^{(\epsilon)}\bigr)^\vee$. Using \eqref{equation9}, one easily checks the relation 
\[ u_m\beta_m=p\cdot\cores_{K_{m+1}/K_m}(\beta_{m+1}) \] 
for some $u_m\in R_m^\times$ (recall that we denote by $\beta_m$ the image of the Heegner cycle $\alpha_m$ in $\mathcal E_m$). Using this relation, one can prove the existence of isomorphisms $i_m^{(\epsilon)}:\bigl(R_m^{(\epsilon)}\bigr)^\vee\xrightarrow\simeq R_m^{(\epsilon)}$ such that if $\psi_m$ is the composition 
\[ \psi_m:\mathcal E_m\xrightarrow{\tilde\psi_m}\bigl(R_m^{(\epsilon)}\bigr)^\vee\xrightarrow{i_m^{(\epsilon)}}R_m^{(\epsilon)} \] 
then the cyclic $R_m$-modules $\psi_m(\mathcal E_m)$ are generated by elements $\theta_m\in R_m$ satisfying $\theta_\infty:={(\theta_m)}_m\in\Lambda$. From now on, fix $i_m^{(\epsilon)}$ as before, so that $\theta_\infty\in\Lambda$. In the following, we will identify $R_m^{(\epsilon)}$ and its Pontryagin dual by means of this map without making it explicit. We will also implicitly identify $R_m^{(-\epsilon)}$ with its Pontryagin dual, but we will not need to specify a convenient isomorphism in this case. 
 
\subsection{Galois extensions} \label{galois-subsec}

In this subsection we introduce several Galois extensions attached to the modules defined in \S \ref{sec-complex-conj}. We start with the following general discussion. 

For any $\mathcal O_\p/p^m\mathcal O_\p$-submodule $S\subset H^1_f(K_m,A_{p^m})$, we define the extension $M_S$ of $K_m(A_{p^m})$ cut out by $S$ as follows. Set 
\[ \mathcal G_m:=\Gal(K_m(A_{p^m})/K_m). \] 
With a slight abuse, we shall often view $\mathcal G_m$ as a subgroup of $\GL_2(\cO_\p/p^m\cO_\p)$, according to convenience. By \cite[Lemma 6.2]{BesDM}, the image of $\rho_m$ in $\GL_2(\cO_\p/p^m\cO_\p)$ contains a subgroup that is conjugate to $\GL_2(\Z/p^m\Z)$. Therefore
\begin{equation} \label{lemma-G_m} 
\text{$\mathcal G_m$ contains a subgroup that is conjugate to $\GL_2(\Z/p^m\Z)$ in $\GL_2(\cO_\p/p^m\cO_\p)$.}
\end{equation}

We need a variant of \cite[Proposition 6.3, (2)]{BesDM}, which we prove in the next

\begin{lemma} \label{vanishing-lemma}
$H^1\bigl(\Gal(K_m(A_{p^m})/K_m),A_{p^m}\bigr)=0$.
\end{lemma}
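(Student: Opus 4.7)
The plan is to exploit the large central element $-I$ guaranteed by \eqref{lemma-G_m}, and kill the cohomology by an order argument modulo the center.

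First, by \eqref{lemma-G_m} there exists a subgroup $H \subset \mathcal{G}_m$ that is conjugate in $\GL_2(\cO_\p/p^m\cO_\p)$ to $\GL_2(\Z/p^m\Z)$. Since conjugation fixes the scalar matrix $-I$, and $-I$ lies in $\GL_2(\Z/p^m\Z)$, we have $-I \in H \subset \mathcal{G}_m$. Set $Z := \langle -I\rangle$; this is a central (hence normal) subgroup of $\mathcal{G}_m$. Because $p \geq 5$ by Assumption \ref{ass}(2), the element $-I$ has order exactly $2$ in $\Aut(A_{p^m})$, so $|Z| = 2$.

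The natural inflation-restriction exact sequence for $Z \trianglelefteq \mathcal{G}_m$ reads
\[
0 \longrightarrow H^1\bigl(\mathcal{G}_m/Z,\, A_{p^m}^Z\bigr) \longrightarrow H^1\bigl(\mathcal{G}_m, A_{p^m}\bigr) \longrightarrow H^1\bigl(Z, A_{p^m}\bigr).
\]
I then compute the two outer terms. On the left, $A_{p^m}^Z = \{v \in A_{p^m} : -v = v\} = A_{p^m}[2]$, which vanishes since $A_{p^m}$ is a $p$-torsion module and $p$ is odd. On the right, $H^1(Z, A_{p^m}) = 0$ because $|Z| = 2$ is coprime to the order of the $p$-group $A_{p^m}$, so the cohomology of the finite group $Z$ with coefficients in $A_{p^m}$ is trivial in positive degrees. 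Combining these vanishings forces $H^1(\mathcal{G}_m, A_{p^m}) = 0$, as desired.

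I expect no serious obstacle: once one observes that the hypothesis $p > 2$ makes $-I$ behave like a nontrivial involution, the whole argument is a one-line application of inflation-restriction. The only thing to verify carefully is that the conjugate subgroup $H$ genuinely contributes the element $-I$ to $\mathcal{G}_m$ (which is automatic because $-I$ is central in the ambient $\GL_2(\cO_\p/p^m\cO_\p)$ and hence invariant under conjugation), and that the bound on $p$ supplied by Assumption \ref{ass} is strong enough to ensure $2 \in (\cO_\p/p^m\cO_\p)^\times$, which it is.
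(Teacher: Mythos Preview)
Your proof is correct and takes a genuinely different route from the paper's.

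The paper proceeds by identifying $\mathcal G_m$ with $\Gal\bigl(K(A_{p^m})/K(A_{p^m})\cap K_m\bigr)$, then applies the inflation--restriction--transgression sequence for the tower $K\subset K(A_{p^m})\cap K_m\subset K(A_{p^m})$: the left-hand term vanishes by \cite[Proposition 6.3, (2)]{BesDM}, the right-hand term vanishes because $A_{p^m}(K(A_{p^m})\cap K_m)=0$ (via \cite[Lemma 3.10, (2)]{LV}), and the passage from the $\Gal(K(A_{p^m})\cap K_m/K)$-invariants to the full $H^1$ uses a lemma of Serre on $p$-groups. Your argument instead exploits the central involution $-I\in\mathcal G_m$ guaranteed by \eqref{lemma-G_m}: the inflation--restriction sequence for $Z=\langle -I\rangle\trianglelefteq\mathcal G_m$ has both outer terms trivially zero because $p$ is odd, and the vanishing follows in one line. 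This is essentially Sah's lemma (a central scalar $\lambda$ with $\lambda-1$ invertible on the module kills $H^1$). Your approach is shorter and self-contained once \eqref{lemma-G_m} is in hand; the paper's approach ties the result more explicitly to the existing literature on the image of $\rho_m$ and the arithmetic of $K_m$, but at the cost of invoking three external references.
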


\begin{proof} Restriction of automorphisms injects $\Gal(K_m(A_{p^m})/K_m)$ into $\Gal(K(A_{p^m})/K)$; this induces a natural identification
\begin{equation} \label{gal-ident-eq} 
\Gal\bigl(K_m(A_{p^m})/K_m\bigr)=\Gal\bigl(K(A_{p^m})/K(A_{p^m})\cap K_m\bigr). 
\end{equation}
From the inflation-restriction exact sequence we can extract the exact sequence
\begin{equation} \label{transgression-eq}
\small{\begin{split}
   H^1\bigl(\Gal(K(A_{p^m})/K),A_{p^m}\bigr)&\longrightarrow H^1\bigl(\Gal(K(A_{p^m})/K(A_{p^m})\cap K_m),A_{p^m}\bigr)^{\Gal(K(A_{p^m})\cap K_m/K)}\\
   &\longrightarrow H^2\bigl(\Gal(K(A_{p^m})\cap K_m/K),A_{p^m}(K(A_{p^m})\cap K_m)\bigr),
\end{split}} 
\end{equation}
where the first map is restriction and the second is transgression. By \cite[Proposition 6.3, (2)]{BesDM}, $H^1\bigl(\Gal(K(A_{p^m})/K),A_{p^m}\bigr)=0$. On the other hand, the extension $K_m/\Q$ is solvable, hence $A_{p^m}(K(A_{p^m})\cap K_m)=0$ by \cite[Lemma 3.10, (2)]{LV}. As a consequence, the middle term in \eqref{transgression-eq} is trivial. But $H^1\bigl(\Gal(K(A_{p^m})/K(A_{p^m})\cap K_m),A_{p^m}\bigr)$ and $\Gal(K(A_{p^m})\cap K_m/K)$ are both $p$-groups, hence \cite[Lemma 3]{Serre} implies that
\[ H^1\bigl(\Gal(K(A_{p^m})/K(A_{p^m})\cap K_m),A_{p^m}\bigr)=0. \]
Now the lemma follows from \eqref{gal-ident-eq}. \end{proof}

Thanks to the vanishing result of Lemma \ref{vanishing-lemma}, restriction gives an injection 
\[ H^1_f(K_m,A_{p^m})\longmono H^1_f\bigl(K_m(A_{p^m}),A_{p^m}\bigr)^{\mathcal G_m}. \]
Define $G_{K_m(A_{p^m})}^{\rm ab}:=\Gal\bigl(K_m(A_{p^m})^{\rm ab}/K_m(A_{p^m})\bigr)$ where $K_m(A_{p^m})^{\rm ab}$ is the maximal abelian extension of $K_m(A_{p^m})$. It follows that there is an identification 
\[ H^1\bigl(K_m(A_{p^m}),A_{p^m}\bigr)^{\mathcal G_m}=\Hom_{\mathcal G_m}\bigl(G_{K_m(A_{p^m})}^{\rm ab},A_{p^m}\bigr) \] 
of $\cO_\p/p^m\cO_\p$-modules, where $\Hom_{\mathcal G_m}(\bullet,\star)$ stands for the group of $\mathcal G_m$-homomorphisms. Thus we obtain an injection of $\cO_\p/p^m\cO_\p$-modules 
\begin{equation} \label{eq13}
S\longmono\Hom_{\mathcal G_m}\bigl(G_{K_m(A_{p^m})}^{\rm ab},A_{p^m}\bigr),\qquad s\longmapsto\varphi_s,
\end{equation}
and for every $s\in S$ we let $M_s$ be the subfield of $K_m(A_{p^m})^{\rm ab}$ fixed by $\ker(\varphi_s)$. In other words, $M_s$ is the smallest abelian extension of 
${K_m(A_{p^m})}$ such that the restriction of $\varphi_s$ to $\Gal(K_m(A_{p^m})^{\rm ab}/M_s)$ is trivial. The maps $\varphi_s$ induce injections  
\[ \varphi_s:\Gal\bigl(M_s/K_m(A_{p^m})\bigr)\longmono A_{p^m} \]
of $\mathcal G_m$-modules. Let $M_S\subset K_m(A_{p^m})^{\rm ab}$ denote the composite of all the fields $M_s$ for $s\in S$.

Now we prove that the map 
\begin{equation} \label{eq14}
\Gal\bigl(M_S/K_m(A_{p^m})\bigr)\longrightarrow\Hom(S,A_{p^m}),\qquad g\longmapsto\bigl(s\mapsto\varphi_s({g|}_{M_s})\bigr) 
\end{equation}
is a $\mathcal G_m$-isomorphism; here $\Hom(\bullet,\star)$ is a shorthand for $\Hom_{\cO_\p/p^m\cO_\p}(\bullet,\star)$. Furthermore, we show that the map \eqref{eq13} induces an isomorphism 
\begin{equation} \label{eq17} 
S\overset\simeq\longrightarrow\Hom_{\mathcal G_m}\bigl(\Gal(M_S/K_m(A_{p^m})),A_{p^m}\bigr)
\end{equation}
of $\cO_\p/p^m\cO_\p$-modules. First of all, note that \eqref{eq13} gives an injection  
\begin{equation} \label{inj-S-eq}
S\longmono\Hom_{\mathcal G_m}\bigl(\Gal(M_S/K_m(A_{p^m})),A_{p^m}\bigr),\qquad s\longmapsto\bigl(g\mapsto\varphi_s({g|}_{M_s})\bigr) 
\end{equation}
of $\cO_\p/p^m\cO_\p$-modules.

Since $S$ is a finite $\cO_\p$-module of exponent $p^m$, there is an isomorphism 
\[ \xi:S\overset\simeq\longrightarrow\prod_{i=1}^t\cO_\p/p^{m_i}\cO_\p \] 
of $\cO_\p/p^m\cO_\p$-modules for suitable integers $m_i\leq m$. For every $i\in\{1,\dots,t\}$ set $S_i:=\xi^{-1}\bigl(\cO_\p/p^{m_i}\cO_\p\bigr)$ and choose a generator $s_i$ of the free $\cO_\p/p^{m_i}\cO_\p$-module $S_i$. The map $\varphi_{s_i}$ is killed by $p^{m_i}$, so it induces an injection
\[ \varphi_{s_i}:\Gal\bigl(M_{s_i}/K_m(A_{p^m})\bigr)\longmono A_{p^{m_i}}. \] 
On the other hand, for every $i$ there is a $\mathcal G_m$-equivariant injection 
\[ \Gal\bigl(M_{s_i}/K_m(A_{p^m})\bigr)\longmono\Hom(S_i,A_{p^m}),\qquad g\longmapsto\bigl(s_i\mapsto\varphi_{s_i}(g)\bigr). \]
But $\Hom(S_{s_i},A_{p^m})\simeq A_{p^{m_i}}$, hence there is a $\mathcal G_m$-equivariant injection
\begin{equation} \label{galois-inj-eq}
\Gal\bigl(M_S/K_m(A_{p^m})\bigr)\longmono\Hom(S,A_{p^m})\simeq\prod_{i=1}^tA_{p^{m_i}}. 
\end{equation}
Using the fact that the Galois representation $\rho_1$ on $A_p$ is irreducible (\cite[Lemma 3.8]{LV}), it can be shown that the image of \eqref{galois-inj-eq} is of the form $\prod_{i=1}^{t'}A_{p^{m_i'}}$ for integers $t'\leq t$ and $m_i'\leq m_i$, hence there is a $\mathcal G_m$-equivariant isomorphism $\Gal(M_S/K_m(A_{p^m}))\simeq \prod_{i=1}^{t'}A_{p^{m_i'}}$. 

\begin{proposition} \label{equiv-prop}
$\Hom_{\mathcal G_m}\bigl(A_{p^{m_i'}},A_{p^m}\bigr)\simeq\cO_\p/{p^{m_i'}}\cO_\p$.
\end{proposition}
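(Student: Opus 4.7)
The plan is to first reduce the computation of $\Hom_{\mathcal G_m}(A_{p^{m_i'}}, A_{p^m})$ to an endomorphism ring, then identify the latter as a centralizer inside a matrix algebra, and finally apply an elementary Schur-type argument. Since $A_{p^{m_i'}}$ is killed by $p^{m_i'}$, any $\mathcal G_m$-homomorphism $\varphi: A_{p^{m_i'}} \to A_{p^m}$ has image contained in $A_{p^m}[p^{m_i'}] = A_{p^{m_i'}}$, so there is a canonical identification $\Hom_{\mathcal G_m}(A_{p^{m_i'}}, A_{p^m}) = \End_{\mathcal G_m}(A_{p^{m_i'}})$.

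Next, I would fix an $\cO_\p/p^{m_i'}\cO_\p$-basis of the free rank-$2$ module $A_{p^{m_i'}}$, so as to identify $\End_{\cO_\p/p^{m_i'}\cO_\p}(A_{p^{m_i'}}) \simeq M_2(\cO_\p/p^{m_i'}\cO_\p)$; under this identification, the action of $\mathcal G_m$ on $A_{p^{m_i'}}$ is the reduction of $\rho_m$ modulo $p^{m_i'}$, and $\End_{\mathcal G_m}(A_{p^{m_i'}})$ becomes the centralizer of the image of this reduction in $M_2(\cO_\p/p^{m_i'}\cO_\p)$. By \eqref{lemma-G_m}, the image of $\rho_m$ contains a subgroup conjugate to $\GL_2(\Z/p^m\Z)$; reducing modulo $p^{m_i'}$ and using the surjectivity of $\GL_2(\Z/p^m\Z) \twoheadrightarrow \GL_2(\Z/p^{m_i'}\Z)$, the image of $\mathcal G_m$ in $\GL_2(\cO_\p/p^{m_i'}\cO_\p)$ contains a subgroup conjugate to $\GL_2(\Z/p^{m_i'}\Z)$.

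It then suffices to verify that the centralizer of any conjugate of $\GL_2(\Z/p^{m_i'}\Z)$ inside $M_2(\cO_\p/p^{m_i'}\cO_\p)$ equals the ring of scalar matrices, which is naturally isomorphic to $\cO_\p/p^{m_i'}\cO_\p$. Since conjugation preserves scalars, one may assume the subgroup is $\GL_2(\Z/p^{m_i'}\Z)$ itself; a matrix commuting with the two elementary transvections $I_2+E_{12}$ and $I_2+E_{21}$ is forced by a direct calculation to be scalar, while scalar matrices plainly commute with everything. Conversely, every element of $\cO_\p/p^{m_i'}\cO_\p$ acts as a $\mathcal G_m$-equivariant endomorphism, so the inclusion is an equality.

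The only mildly delicate point in this plan is the bookkeeping between the identification $A_{p^m}[p^{m_i'}] = A_{p^{m_i'}}$, the reduction of the Galois representation from level $p^m$ to level $p^{m_i'}$, and the conjugation present in \eqref{lemma-G_m}; the remaining centralizer computation in a $2 \times 2$ matrix algebra is completely elementary and presents no real obstacle.
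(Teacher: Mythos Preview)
Your proof is correct and follows essentially the same route as the paper: reduce to $\End_{\mathcal G_m}(A_{p^{m_i'}})$, pass to a centralizer in $\M_2(\cO_\p/p^{m_i'}\cO_\p)$ via \eqref{lemma-G_m} and reduction modulo $p^{m_i'}$, then compute that the centralizer of (a conjugate of) $\GL_2(\Z/p^{m_i'}\Z)$ consists of the scalars. Your version is in fact slightly more explicit, naming the transvections $I_2+E_{12}$ and $I_2+E_{21}$ that force scalarity, whereas the paper simply asserts this last step.
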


\begin{proof} First of all, there is a canonical identification 
\[ \Hom_{\mathcal G_m}\bigl(A_{p^{m_i'}},A_{p^m}\bigr)=\Hom_{\mathcal G_m}\bigl(A_{p^{m_i'}},A_{p^{m'_i}}\bigr). \]
To ease the notation, set $n:=m'_i$. Since $A_{p^n}$ is free of rank $2$ over $\cO_\p/p^n\cO_\p$, there is an isomorphism $\Hom_{\cO_\p/p^n\cO_\p}(A_{p^n},A_{p^n})\simeq\M_2(\cO_\p/p^n\cO_\p)$; to obtain it, we fix a basis of $A_{p^n}$ over $\cO_\p/p^n\cO_\p$. Observe that the group $\mathcal G_m\subset\GL_2(\cO_\p/p^m\cO_\p)$ acts on $A_{p^n}$ via the reduction map $\GL_2(\cO_\p/p^m\cO_\p)\rightarrow\GL_2(\cO_\p/p^n\cO_\p)$. Let $\mathcal G_m^{(n)}$ be the image of $\mathcal G_m$ in $\GL_2(\cO_\p/p^n\cO_\p)$, so that
\begin{equation} \label{hom-eq} 
\Hom_{\mathcal G_m}(A_{p^n},A_{p^m})=\Hom_{\mathcal G_m^{(n)}}(A_{p^n},A_{p^n}). 
\end{equation}
By \eqref{lemma-G_m}, $\mathcal G_m$ contains a subgroup that is conjugate to $\GL_2(\Z/p^m\Z)$, hence $\mathcal G_m^{(n)}$ contains a subgroup that is conjugate to $\GL_2(\Z/p^n\Z)$. Write $g\GL_2(\Z/p^n\Z)g^{-1}$ with $g\in\GL_2(\cO_\p/p^n\cO_\p)$ for this subgroup. In particular, we want to find the matrices $A\in\M_2(\cO_\p/p^n\cO_\p)$ such that 
\[ (g X g^{-1})A(gX^{-1}g^{-1})=A \] 
for all $X\in\GL_2(\Z/p^n\Z)$. Upon setting $B:=g^{-1}Ag$, this is equivalent to describing the matrices $B\in\M_2(\cO_\p/p^n\cO_\p)$ such that 
\[ XBX^{-1}=B \] 
for all $X\in\GL_2(\Z/p^n\Z)$. Clearly, these are exactly the diagonal matrices, so we conclude that $\Hom_{\mathcal G_m^{(n)}}(A_{p^n},A_{p^n})\simeq\cO_\p/p^n\cO_\p$. The proposition follows from \eqref{hom-eq}. \end{proof} 

It follows from Proposition \ref{equiv-prop} that there is an isomorphism of $\cO_\p/p^m\cO_\p$-modules 
\[ \Hom_{\mathcal G_m}\bigl(\Gal(M_S/K_m(A_{p^m})),A_{p^m}\bigr)\simeq\prod_{i=1}^{t'}\cO_\p/{p^{m_i'}}\cO_\p. \]
By \eqref{inj-S-eq}, $S$ injects into $\Hom_{\mathcal G_m}\bigl(\Gal(M_S/K_m(A_{p^m})),A_{p^m}\bigr)$, and then comparing cardinalities gives $t=t'$ and $m_i=m_i'$ for all $i$, thus proving \eqref{eq14} and \eqref{eq17} simultaneously. From this we also deduce the following 

\begin{proposition} \label{S'-prop}
Given subgroups $S'\subset S\subset H^1_f(K_m,A_{p^m})$,  there is a canonical isomorphism of groups 
\[ \Gal(M_S/M_{S'})\simeq\Hom(S/S',A_{p^m}). \]
Conversely, for every subgroup $\bar S$ of $S/S'$ there is a subextension $M_{\bar S}/M_{S'}$ of $M_S/M_{S'}$ such that 
\[ \Gal(M_{\bar S}/M_{S'})\simeq\Hom(\bar S,A_{p^m}). \]
\end{proposition}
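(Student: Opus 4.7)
The plan is to reduce both assertions to the already-established isomorphism $\Gal(M_S/K_m(A_{p^m})) \simeq \Hom(S, A_{p^m})$ from \eqref{eq14} and to exploit its naturality in $S$.

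First I would verify that since $S' \subset S$, every field $M_{s'}$ for $s' \in S'$ is already contained in $M_S$ (being among the fields whose composite defines $M_S$), so $M_{S'} \subset M_S$. Standard Galois theory then gives an exact sequence
\[ 1\longrightarrow \Gal(M_S/M_{S'})\longrightarrow \Gal(M_S/K_m(A_{p^m}))\longrightarrow \Gal(M_{S'}/K_m(A_{p^m}))\longrightarrow 1, \]
where the right-hand map is restriction. The key observation is that, under the identifications of \eqref{eq14}, this restriction corresponds to the natural map $\Hom(S,A_{p^m}) \to \Hom(S', A_{p^m})$, $\varphi \mapsto \varphi|_{S'}$, induced by $S' \hookrightarrow S$. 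This naturality is immediate from the formula $g \mapsto (s \mapsto \varphi_s(g|_{M_s}))$, and it forces the right-hand map above to be surjective (matching the already-known surjectivity onto $\Hom(S',A_{p^m})$ provided by \eqref{eq14} applied to $S'$). Applying $\Hom_{\mathcal O_\p/p^m\mathcal O_\p}(-,A_{p^m})$ to the short exact sequence $0 \to S' \to S \to S/S' \to 0$ yields left exactness, so the kernel of $\Hom(S,A_{p^m}) \to \Hom(S',A_{p^m})$ is canonically $\Hom(S/S',A_{p^m})$. Comparing kernels gives the canonical isomorphism $\Gal(M_S/M_{S'})\simeq\Hom(S/S',A_{p^m})$.

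For the converse, given a subgroup $\bar S \subset S/S'$, I would take $\tilde S := \pi^{-1}(\bar S) \subset S$, where $\pi \colon S \twoheadrightarrow S/S'$ is the canonical projection, so that $S' \subset \tilde S \subset S$ and $\tilde S/S' = \bar S$. Setting $M_{\bar S} := M_{\tilde S}$, a field that sits inside $M_S$ and contains $M_{S'}$, the first part applied to the pair $S' \subset \tilde S$ gives
\[ \Gal(M_{\bar S}/M_{S'}) \simeq \Hom(\tilde S/S', A_{p^m}) = \Hom(\bar S, A_{p^m}), \]
as required.

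The only potentially delicate point is verifying that the restriction map $\Gal(M_S/K_m(A_{p^m})) \to \Gal(M_{S'}/K_m(A_{p^m}))$ genuinely corresponds to the transpose of the inclusion $S'\hookrightarrow S$ under the identifications of \eqref{eq14}; but this is essentially tautological from the construction of $\varphi_s$ and $M_s$, so no new input beyond what has been established in \S\ref{galois-subsec} is needed. The whole argument is thus a formal diagram chase once the naturality is noted.
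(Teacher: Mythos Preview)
Your argument for the first assertion is essentially identical to the paper's: both dualize the short exact sequence $0\to S'\to S\to S/S'\to 0$, use \eqref{eq14} to identify the Galois restriction with the transpose of $S'\hookrightarrow S$, and read off the kernel.

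For the second assertion you take a slightly different and cleaner route. The paper redoes the cut-out construction over the new base $M_{S'}$: for each $s\in\bar S$ it defines an extension $M_s/M_{S'}$, takes $M_{\bar S}$ to be their composite, and then reruns the analysis of \S\ref{galois-subsec} with $K_m(A_{p^m})$ replaced by $M_{S'}$ and $\mathcal G_m$ by $\Gal(M_{S'}/K_m)$. You instead pull $\bar S$ back to $\tilde S=\pi^{-1}(\bar S)\subset S$, set $M_{\bar S}:=M_{\tilde S}$, and invoke the first part applied to $S'\subset\tilde S$. Your approach avoids having to verify that the earlier discussion transfers to a different base field; the paper's approach, on the other hand, yields the extra isomorphism $\bar S\simeq\Hom_{\Gal(M_{S'}/K_m)}\bigl(\Gal(M_{\bar S}/M_{S'}),A_{p^m}\bigr)$ along the way. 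Both are correct, and the resulting fields $M_{\bar S}$ in fact coincide.
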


\begin{proof} The short exact sequence $0\rightarrow S'\rightarrow S\rightarrow S/S'\rightarrow0$ gives a short exact sequence
\[ 0\longrightarrow\Hom(S/S',A_{p^m})\longrightarrow\Hom(S,A_{p^m})\longrightarrow\Hom(S',A_{p^m})\longrightarrow0, \] 
where the surjectivity on the right comes from isomorphism \eqref{eq14} and the surjectivity of the canonical map from $\Gal(M_{S}/K_m(A_{p^m}))$ to $\Gal(M_{S'}/K_m(A_{p^m}))$. The first claim follows from the fact that, again by \eqref{eq14}, the kernel of the third arrow is naturally isomorphic to $\Gal(M_S/M_{S'})$. 

As for the second assertion, suppose that $\bar S$ is a subgroup of $S/S'$. Then, as before, one can define a subextension $M_{\bar S}/M_{S'}$ of $M_S/M_{S'}$; namely, an element $s\in\bar S$ cuts out an extension $M_s/M_{S'}$ and we let $M_{\bar S}$ be the composite of all the fields $M_s$. Replacing $S$ with $\bar S$, $K_m(A_{p^m})$ with $M_{S'}$ and $\mathcal G_m$ with $\Gal(M_{S'}/K_m)$ in the discussion above, one checks that there are isomorphisms of groups 
\[ \bar S\simeq\Hom_{\Gal(M_{S'}/K_m)}\bigl(\Gal(M_{\bar S}/M_{S'}),A_{p^m}\bigr),\qquad\Gal(M_{\bar S}/M_{S'})\simeq\Hom(\bar S,A_{p^m}), \]
as required. \end{proof}

With notation as in Proposition \ref{S'-prop}, we say that $M_S/M_{S'}$ is the extension associated with the quotient $S/S'$. Now we apply these constructions to the setting of \S \ref{sec-complex-conj}. 

\begin{lemma} \label{inj-selmer-lemma}
There is an injection
\[ H^1_f(K_m,A_{p^m})\longmono H^1_f(K_{m+1},A_{p^{m+1}}) \]
induced by the restriction map and the inclusion $A_{p^m}\hookrightarrow A_{p^{m+1}}$.
\end{lemma}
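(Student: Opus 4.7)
The plan is to realise the asserted map as the two-step composition
\[ H^1_f(K_m,A_{p^m})\xrightarrow{\ j_*\ }H^1_f(K_m,A_{p^{m+1}})\xrightarrow{\ \res_{K_{m+1}/K_m}\ }H^1_f(K_{m+1},A_{p^{m+1}}), \]
where $j_*$ is induced by the inclusion $j:A_{p^m}\hookrightarrow A_{p^{m+1}}$, and then verify separately that each arrow is well defined on Selmer groups and injective.

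For well-definedness of $j_*$, recall from \S\ref{sec-Bloch-Kato} that $H^1_f(K_m,A_{p^n})=i_n^{-1}\bigl(H^1_f(K_m,A)\bigr)$, where $i_n$ is induced by $A_{p^n}\hookrightarrow A$. Since $i_{m+1}\circ j_*=i_m$ by functoriality of cohomology applied to the compatible inclusions, every class $c\in H^1_f(K_m,A_{p^m})$ satisfies $i_{m+1}(j_*(c))=i_m(c)\in H^1_f(K_m,A)$, hence $j_*(c)\in H^1_f(K_m,A_{p^{m+1}})$. The restriction map preserves Selmer conditions by the analogous compatibility together with the restriction maps on $H^1_f(-,A)$ recalled in \S\ref{sec-Bloch-Kato}.

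Injectivity is where Lemma \ref{vanishing-0-lemma} enters. The short exact sequence of $G_{K_m}$-modules
\[ 0\longrightarrow A_{p^m}\overset{j}\longrightarrow A_{p^{m+1}}\overset{p^m}\longrightarrow A_p\longrightarrow 0 \]
yields a long exact cohomology sequence in which the kernel of $j_*$ is the image of $H^0(K_m,A_p)$. But $H^0(K_m,A_p)\subset H^0(K_m,A)=0$ by Lemma \ref{vanishing-0-lemma}, so $j_*$ is injective. For the restriction map, the inflation-restriction sequence identifies its kernel with $H^1\bigl(\Gal(K_{m+1}/K_m),H^0(K_{m+1},A_{p^{m+1}})\bigr)$, and once again $H^0(K_{m+1},A_{p^{m+1}})\subset H^0(K_{m+1},A)=0$ by Lemma \ref{vanishing-0-lemma}, so restriction is injective as well. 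Composing the two injections proves the lemma.

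I do not expect a genuine obstacle here: everything is formal, and the only nontrivial input is the vanishing of $H^0(K_m,A)$ already recorded in Lemma \ref{vanishing-0-lemma}. The one small subtlety worth double-checking is the commutativity of the diagrams relating the maps $i_n$ with $j_*$ and with restriction, but this is immediate from the functoriality of continuous cohomology in both the module and the group variable.
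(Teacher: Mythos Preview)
Your proof is correct and follows essentially the same approach as the paper: factor the map as a composition of a restriction step and a change-of-coefficients step, and deduce injectivity of each from the vanishing of the relevant $H^0$, which ultimately rests on Lemma \ref{vanishing-0-lemma}. The only cosmetic difference is that the paper performs the two steps in the opposite order (first restricting with $A_{p^m}$-coefficients, then passing to $A_{p^{m+1}}$ over $K_{m+1}$), and it cites \cite[Lemma 3.10, (2)]{LV} directly rather than going through Lemma \ref{vanishing-0-lemma}; your extra justification that the maps land in the Selmer subgroups is a welcome detail that the paper leaves implicit.
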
 

\begin{proof} The extension $K_{m+1}/\Q$ is solvable, hence $A_{p^m}(K_{m+1})=0$ by \cite[Lemma 3.10, (2)]{LV}. It follows that restriction induces an injection
\begin{equation} \label{inj1-eq}
H^1(K_m,A_{p^m})\longmono H^1(K_{m+1},A_{p^m}).
\end{equation} 
Now consider the short exact sequence of $G_{K_{m+1}}$-modules
\begin{equation} \label{inj2-eq}
0\longrightarrow A_{p^m}\longrightarrow A_{p^{m+1}}\longrightarrow A_{p^{m+1}}/A_{p^m}\longrightarrow0. 
\end{equation}
Multiplication by $p^m$ yields a Galois-equivariant isomorphism $A_{p^{m+1}}/A_{p^m}\simeq A_p$, therefore $(A_{p^{m+1}}/A_{p^m})(K_{m+1})=0$ again by \cite[Lemma 3.10, (2)]{LV}. In light of this vanishing, passing to the long exact $G_{K_{m+1}}$-cohomology sequence associated with \eqref{inj2-eq} gives an injection
\begin{equation} \label{inj3-eq}
H^1(K_{m+1},A_{p^m})\longmono H^1(K_{m+1},A_{p^{m+1}}).
\end{equation}
Combining \eqref{inj1-eq} and \eqref{inj3-eq}, we get an injection
\[ H^1(K_m,A_{p^m})\longmono H^1(K_{m+1},A_{p^{m+1}}) \] 
that restricts to the desired injection between Selmer groups. \end{proof}

Thanks to Lemma \ref{inj-selmer-lemma}, in the sequel we shall often view $H^1_f(K_m,A_{p^m})$ as a subgroup of $H^1_f(K_{m+1},A_{p^{m+1}})$. Let $M_m$ denote the field cut out by the whole group $S=H^1_f(K_m,A_{p^m})$; then $M_m\subset M_{m+1}$. 

\begin{proposition} \label{surj-gal-prop}
There is a canonical surjection 
\[ \Gal\bigl(M_{m+1}/K_{m+1}(A_{p^{m+1}})\bigr)\longepi\Gal\bigl(M_m/K_m(A_{p^m})\bigr). \]
\end{proposition}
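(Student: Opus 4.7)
The plan is to translate both Galois groups into Hom modules using the isomorphism \eqref{eq14}, and then establish surjectivity directly at the Hom level, exploiting the fact that $A_{p^{m+1}}$ is injective as a module over a self-injective ring.

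First, I would apply the isomorphism \eqref{eq14} at levels $m$ and $m+1$ to obtain canonical identifications
\[
\Gal\bigl(M_m/K_m(A_{p^m})\bigr)\simeq\Hom\bigl(H^1_f(K_m,A_{p^m}),A_{p^m}\bigr)
\]
and
\[
\Gal\bigl(M_{m+1}/K_{m+1}(A_{p^{m+1}})\bigr)\simeq\Hom\bigl(H^1_f(K_{m+1},A_{p^{m+1}}),A_{p^{m+1}}\bigr).
\]
Via Lemma \ref{inj-selmer-lemma}, the Selmer group $H^1_f(K_m,A_{p^m})$ sits as a submodule of $H^1_f(K_{m+1},A_{p^{m+1}})$, so restriction of homomorphisms along this inclusion produces a canonical map with values in $\Hom(H^1_f(K_m,A_{p^m}),A_{p^{m+1}})$. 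Since $H^1_f(K_m,A_{p^m})$ is killed by $p^m$, any such homomorphism factors through $A_{p^{m+1}}[p^m]=A_{p^m}$, yielding a canonical map
\[
\rho:\Hom\bigl(H^1_f(K_{m+1},A_{p^{m+1}}),A_{p^{m+1}}\bigr)\longrightarrow\Hom\bigl(H^1_f(K_m,A_{p^m}),A_{p^m}\bigr).
\]

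Second, I would prove surjectivity of $\rho$. The key observation is that $A_{p^{m+1}}$ is free of rank two over $\cO_\p/p^{m+1}\cO_\p$, and this latter ring is an Artinian local principal ideal ring, hence quasi-Frobenius and in particular self-injective. Therefore $A_{p^{m+1}}$ is an injective $\cO_\p/p^{m+1}\cO_\p$-module. Given $\psi\in\Hom(H^1_f(K_m,A_{p^m}),A_{p^m})$, composing with the inclusion $A_{p^m}\hookrightarrow A_{p^{m+1}}$ yields a homomorphism from the submodule $H^1_f(K_m,A_{p^m})$ into $A_{p^{m+1}}$, which extends by injectivity to all of $H^1_f(K_{m+1},A_{p^{m+1}})$.

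Finally, I would verify that the map $\rho$, transported through \eqref{eq14}, coincides with the natural restriction $\sigma\mapsto\sigma|_{M_m}$ from $\Gal(M_{m+1}/K_{m+1}(A_{p^{m+1}}))$ to $\Gal(M_m/K_m(A_{p^m}))$ (this restriction is well defined because $K_m(A_{p^m})\subset K_{m+1}(A_{p^{m+1}})$ and $M_m\subset M_{m+1}$). This is the main verification required and amounts to a diagram chase: for $s\in H^1_f(K_m,A_{p^m})$, the class of $s$ viewed in $H^1(K_{m+1},A_{p^{m+1}})$ cuts out an abelian extension of $K_{m+1}(A_{p^{m+1}})$ that contains, and restricts to, the abelian extension of $K_m(A_{p^m})$ attached to $s$ at level $m$, so the two evaluation maps $\varphi_s$ coincide on $\sigma|_{M_s^{(m)}}$. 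The main obstacle is merely bookkeeping here; the substantive content is the injectivity argument, which makes the surjectivity of $\rho$ automatic.
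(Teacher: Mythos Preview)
Your proof is correct and follows the same overall strategy as the paper: translate both Galois groups into $\Hom$ modules via \eqref{eq14}, use Lemma \ref{inj-selmer-lemma} to embed $H^1_f(K_m,A_{p^m})$ into $H^1_f(K_{m+1},A_{p^{m+1}})$, and exploit that the smaller Selmer group is $p^m$-torsion to land in $A_{p^m}$. The one substantive difference lies in how surjectivity is obtained. You argue that $A_{p^{m+1}}$ is an injective $\cO_\p/p^{m+1}\cO_\p$-module (the base ring being Artinian local principal, hence quasi-Frobenius), so any homomorphism out of the submodule $H^1_f(K_m,A_{p^m})$ extends to the full Selmer group at level $m+1$. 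The paper instead introduces the intermediate field $H\subset M_{m+1}$ cut out by the image of $H^1_f(K_m,A_{p^m})$ inside $H^1_f(K_{m+1},A_{p^{m+1}})$; applying \eqref{eq14} at level $m+1$ to this subgroup yields $\Gal\bigl(H/K_{m+1}(A_{p^{m+1}})\bigr)\simeq\Hom\bigl(H^1_f(K_m,A_{p^m}),A_{p^m}\bigr)\simeq\Gal\bigl(M_m/K_m(A_{p^m})\bigr)$, and the surjection then comes for free from the inclusion $H\subset M_{m+1}$ via Galois theory. The paper's route is marginally more economical in that it never needs to invoke injectivity of $A_{p^{m+1}}$ --- the surjection is automatic from the field-theoretic picture --- while your module-theoretic argument makes the extension mechanism more explicit and is a perfectly valid alternative.
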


\begin{proof} View $H^1_f(K_m,A_{p^m})$ as a subgroup of $H^1_f(K_{m+1},A_{p^{m+1}})$ and consider the extension $H$ of $K_{m+1}(A_{p^{m+1}})$ cut out by it inside $M_{m+1}$. Then
\[ \Gal\bigl(H/K_{m+1}(A_{p^{m+1}})\bigr)\simeq\Hom\bigl(H^1_f(K_m,A_{p^m}),A_{p^{m+1}}\bigr)=\Hom\bigl(H^1_f(K_m,A_{p^m}),A_{p^m}\bigr), \] 
where the equality is a consequence of $H^1_f(K_m,A_{p^m})$ being $p^m$-torsion. On the other hand, by definition of $M_m$ there is an isomorphism $\Gal(M_m/K_m(A_{p^m}))\simeq\Hom(H^1_f(K_m,A_{p^m}),A_{p^m})$. Comparing, we find an isomorphism
\[ \Gal\bigl(H/K_{m+1}(A_{p^{m+1}})\bigr)\simeq\Gal\bigl(M_m/K_m(A_{p^m})\bigr) \]
that can be combined with the canonical surjection
\[ \Gal\bigl(M_{m+1}/K_{m+1}(A_{p^{m+1}})\bigr)\longepi\Gal\bigl(H/K_{m+1}(A_{p^{m+1}})\bigr) \]
to complete the proof. \end{proof}

Graphically, there is a diagram of field extensions 
\[ \xymatrix@R=20pt{M_{m+1}\ar@{-}[d]\ar@{-}[rdd]&\\
                    H\ar@{-}[d]&\\
                    K_{m+1}(A_{p^{m+1}})\ar@{-}[rd]&M_m\ar@{-}[d]\\
                    &K_m(A_{p^m})} \]
with $\Gal\bigl(H/K_{m+1}(A_{p^{m+1}})\bigr)\simeq\Gal\bigl(M_m/K_m(A_{p^m})\bigr)$. Let us define
\[ M_\infty:=\dirlim_m M_m,\qquad K_\infty(A):=\dirlim_m K_m(A_{p^m}), \]
so that 
\[ \Gal\bigl(M_\infty/K_\infty(A)\bigr):=\invlim_m\Gal\bigl(M_m/K_m(A_{p^m})\bigr), \]
the inverse limit being taken with respect to the maps of Proposition \ref{surj-gal-prop}. By \eqref{eq14}, for every $m\geq0$ there is an isomorphism $\Gal(M_m/K_m(A_{p^m}))\simeq\Hom(H^1_f(K_m,A_{p^m}),A_{p^m})$ of $\cO_\p/p^m\cO_\p[\mathcal G_m]$-modules, hence there is an isomorphism of $\cO_\p[\![\mathcal G_\infty]\!]$-modules  
\[ \Gal\bigl(M_\infty/K_\infty(A)\bigr)\simeq\Hom\bigl(H^1_f(K_\infty,A),A\bigr) \] 
where $\cO_\p[\![\mathcal G_\infty]\!]:=\sideset{}{_m}\invlim\cO_\p[\mathcal G_m]$ is defined with respect to the natural maps $\mathcal G_{m+1}\rightarrow\mathcal G_m$. 

Now recall the map $\vartheta_m^\vee$ of \eqref{theta_m-dual} and let $L_m\subset M_m$ be the extension of $K_m(A_{p^m})$ cut out by $\ker(\vartheta_m^\vee)$. Then there are canonical $\mathcal G_m$-isomorphisms
\[ \Gal\bigl(L_m/K_m(A_{p^m})\bigr)\simeq\Hom\bigl(\ker(\vartheta_m^\vee),A_{p^m}\bigr) \]  
and  
\begin{equation} \label{L_m-isom2-eq}
\Gal(M_m/L_m)\simeq\Hom\bigl(\overline{\Sigma}_m,A_{p^m}\bigr)\simeq\Hom\!\Big(\overline{\Sigma}_m^{(\epsilon)},A_{p^m}\Big)\oplus\Hom\!\Big(\overline{\Sigma}_m^{(-\epsilon)},A_{p^m}\Big);
\end{equation}
here \eqref{L_m-isom2-eq} is a consequence of Proposition \ref{S'-prop}. Let $L_m^{(\pm)}/L_m$ be the subextension of $M_m/L_m$ corresponding to $\overline{\Sigma}_m^{(\pm)}$ (cf. Proposition \ref{S'-prop}); then $L_m^{(\epsilon)}\cap L_m^{(-\epsilon)}=L_m$ and $M_m=L_m^{(\epsilon)}\cdot L_m^{(-\epsilon)}$. Finally, let $\tilde L_m^{(\pm)}$ be the extension of $L_m^{(\pm)}$ corresponding to $\bigl(\overline{\Sigma}_m^{(\pm)}\bigr)^{G_m}$. We have $\tilde L_m^{(+)}\cap\tilde L_m^{(-)}=L_m$ and  
\[ \Gal\bigl(\tilde L_m^{(\pm)}/L_m\bigr)\simeq\Hom\!\Big(\bigl(\overline{\Sigma}_m^{(\pm)}\bigr)^{G_m},A_{p^m}\Big). \]
Moreover, if $\tilde L_m:=\tilde L_m^{(+)}\cdot\tilde L_m^{(-)}$ then 
\[ \begin{split}
   \Gal\bigl(\tilde L_m/L_m\bigr)&\simeq\Hom\!\Big(\bigl(\overline{\Sigma}_m^{(+)}\bigr)^{G_m},A_{p^m}\Big)\oplus\Hom\!\Big(\bigl(\overline{\Sigma}_m^{(-)}\bigr)^{G_m},A_{p^m}\Big)\\&\simeq\Hom\!\Big(\bigl(\overline{\Sigma}_m\bigr)^{G_m},A_{p^m}\Big),
   \end{split} \]
where the second isomorphism follows by taking $G_m$-invariants in \eqref{eq-Sigma_m}. Since, by Lemma \ref{inj-selmer-lemma}, $H^1_f(K_m,A_{p^m})$ injects via restriction into $H^1_f(K_{m+1},A_{p^{m+1}})$, restriction induces an injection $\bigl(\overline{\Sigma}_m\bigr)^{G_m}\hookrightarrow\bigl(\overline{\Sigma}_{m+1}\bigr)^{G_{m+1}}$. It follows that for every $m\geq0$ there is a canonical projection 
\begin{equation} \label{cheb2}
\Gal\bigl(\tilde L_{m+1}/L_{m+1}\bigr)\longepi\Gal\bigl(\tilde L_m/L_m\bigr).
\end{equation} 

To introduce the last field extensions that we need, we dualize the exact sequence
\[ R_m^{(\epsilon)}\oplus R_m^{(-\epsilon)}\xrightarrow{\vartheta_m}\Sigma_m^\vee\longrightarrow\Sigma_m^\vee/\mathrm{im}(\vartheta_m)\longrightarrow0 \] 
and get an isomorphism $\ker(\vartheta_m^\vee)\simeq\bigl(\Sigma_m^\vee/\mathrm{im}(\vartheta_m)\bigr)^{\!\vee}$. Furthermore, dualizing the short exact sequence
\[ 0\longrightarrow\mathrm{im}(\vartheta_m)\longrightarrow\Sigma_m^\vee\longrightarrow\ker(\vartheta_m^\vee)^\vee\longrightarrow0 \] 
gives a short exact sequence
\begin{equation} \label{dual-seq-eq}
0\longrightarrow\ker(\vartheta_m^\vee)\longrightarrow\Sigma_m\xrightarrow{\vartheta_m^\vee}\mathrm{im}(\vartheta_m)^\vee\longrightarrow0.
\end{equation}
Finally, with maps $\vartheta$ and $p_m$ defined as in \eqref{theta-eq} and \eqref{p_m-eq}, let $U_m$ be the $\tilde R_m$-submodule of $H^1_f(K_m,A_{p^m})$ such that there is an indentification 
\begin{equation} \label{image-eq}
{\rm im}(p_m\circ\vartheta)=\Big(H_f^1(K_m,A_{p^m})/U_m\Big)^{\!\vee}. 
\end{equation}
Namely, set for simplicity $\mathcal I_m:={\rm im}(p_m\circ\vartheta)$ and consider the short exact sequence
\[ 0\longrightarrow\mathcal I_m\longrightarrow H_f^1(K_m,A_{p^m})^\vee\longrightarrow H_f^1(K_m,A_{p^m})^\vee/\mathcal I_m\longrightarrow0. \]
Since $\mathcal I_m$ is compact, hence closed in $H_f^1(K_m,A_{p^m})^\vee$, dualizing the sequence above gives a short exact sequence
\begin{equation} \label{duals-eq}
0\longrightarrow\Big(H_f^1(K_m,A_{p^m})^\vee\big/\mathcal I_m\Big)^{\!\vee}\longrightarrow H^1_f(K_m,A_{p^m})\longrightarrow\mathcal I_m^\vee\longrightarrow0. 
\end{equation}
Define $U_m:=\bigl(H_f^1(K_m,A_{p^m})^\vee/\mathcal I_m\bigr)^{\!\vee}$ and regard $U_m$ as an $\tilde R_m$-submodule of $H^1_f(K_m,A_{p^m})$ via \eqref{duals-eq}. Then there is a natural identification
\begin{equation} \label{quotient-eq}
H^1_f(K_m,A_{p^m})/U_m=\mathcal I_m^\vee,
\end{equation}
and dualizing \eqref{quotient-eq} gives \eqref{image-eq}.

Now write $\tilde M_m$ for the field cut out by $U_m$. As $p_m\circ\theta$ factors through $R_m^{(\epsilon)}\oplus R_m^{(-\epsilon)}$, there is a commutative diagram 
\[ \xymatrix{\Lambda^{(\epsilon)}\oplus\Lambda^{(-\epsilon)}\ar@{->>}[d]\ar[r]^-\vartheta&\mathcal X_\infty\ar[r]^-{p_m}&H^1_f(K_m,A_{p^m})^\vee\ar@{->>}[d]\\R_m^{(\epsilon)}\oplus R_m^{(-\epsilon)}\ar[rr]^-{\vartheta_m}&&\Sigma_m^\vee} \] 
which induces a surjection $\mathcal I_m\twoheadrightarrow\mathrm{im}(\vartheta_m)$ and then, by duality, an injection $\mathrm{im}(\vartheta_m)^\vee\hookrightarrow\mathcal I_m^\vee$. From this we obtain a commutative diagram with exact rows
\begin{equation} \label{d22}
\xymatrix{0\ar[r]& \ker(\vartheta_m^\vee)\ar[r]\ar@{^(->}[d]&\Sigma_m\ar[r]\ar@{^(->}[d]&\mathrm{im}(\vartheta_m)^\vee\ar[r]\ar@{^(->}[d]&0\\0\ar[r]&U_m\ar[r]& H^1_f(K_m,A_{p^m})\ar[r]&\mathcal I_m^\vee\ar[r]&0}
\end{equation} 
whose upper row is \eqref{dual-seq-eq}. Denote by $L_m^*$ the field corresponding to $\Sigma_m$, so that $\tilde L_m\subset L_m^*$ by Proposition \ref{S'-prop}. Observe that $\tilde M_m$ and $\tilde L_m$ are linearly disjoint over $L_m$. To check this, note that $\tilde M_m\cap\tilde L_m\subset\tilde M_m\cap L^*_m$ and that the second intersection corresponds to the subgroup $U_m\cap\Sigma_m$ inside $H^1_f(K_m,A_{p^m})$. But diagram \eqref{d22} shows that $\ker(\vartheta_m^\vee)=U_m\cap\Sigma_m$, hence $\tilde M_m\cap L^*_m=L_m$; we conclude that  $\tilde M_m\cap\tilde L_m=L_m$. It follows that
\[ \Gal\bigl(\tilde M_m\cdot\tilde L_m/\tilde M_m\bigr)\simeq\Gal\bigl(\tilde L_m/\tilde M_m\cap\tilde L_m\bigr)\simeq\Gal\bigl(\tilde L_m/L_m\bigr), \] 
and then the inclusion $\tilde M_m\cdot\tilde L_m\subset M_m$ induces a surjection
\begin{equation} \label{gal-onto-eq}
\Gal\bigl(M_m/\tilde M_m\bigr)\longepi\Gal\bigl(\tilde L_m/L_m\bigr). 
\end{equation}
It follows that for every $m$ there is a commutative square of surjective maps 
\begin{equation} \label{diagram}
\xymatrix{\Gal\bigl(M_{m+1}/\tilde M_{m+1}\bigr)\ar@{->>}[r]\ar@{->>}[d]&\Gal\bigl(\tilde L_{m+1}/L_{m+1}\bigr)\ar@{->>}[d]\\\Gal\bigl(M_m/\tilde M_m\bigr)\ar@{->>}[r]&\Gal\bigl(\tilde L_m/L_m\bigr)}
\end{equation} 
where the horizontal arrows are given by \eqref{gal-onto-eq} and the right vertical arrow is given by \eqref{cheb2}. The only things that remain to be checked are the surjectivity of the left vertical map and the commutativity of \eqref{diagram}. First of all, we want to show that $\tilde M_{m+1}\cap M_m=\tilde M_m$; this intersection corresponds to the subgroup $U_{m+1}\cap H^1_f(K_m,A_{p^m})$ inside $H^1_f(K_{m+1},A_{p^{m+1}})$, with $U_{m+1}$ defined as in \eqref{quotient-eq}. Set $\mathcal I_{m+1}:=\mathrm{im}(p_{m+1}\circ\vartheta)$; the injection between Selmer groups of Lemma \ref{inj-selmer-lemma} gives a surjection  $\mathcal I_{m+1}\twoheadrightarrow\mathcal I_m$ and then, by duality, an injection $\mathcal I_m^\vee\hookrightarrow\mathcal I_{m+1}^\vee$. Therefore there is a commutative diagram with exact rows
\[ \xymatrix{0\ar[r]&U_m\ar@{^(->}[d]\ar[r]&H^1_f(K_m,A_{p^m})\ar@{^(->}[d]\ar[r]&\mathcal I_m^\vee\ar@{^(->}[d]\ar[r]&0\\0\ar[r]&U_{m+1}\ar[r]&H^1_f(K_{m+1},A_{p^{m+1}})\ar[r]&\mathcal I_{m+1}^\vee\ar[r]&0} \]
where the injectivity of the map $U_m\rightarrow U_{m+1}$ is forced by the injectivity of the map between Selmer groups. We conclude that the subgroup $U_{m+1}\cap H^1_f(K_m,A_{p^m})$, which corresponds to $\tilde M_m$, is equal to the image of $U_m$ inside $H^1_f(K_{m+1},A_{p^{m+1}})$, and then $\tilde M_{m+1}\cap M_m=\tilde M_m$, as claimed above. It follows that
\[ \Gal\bigl(\tilde M_{m+1}\cdot M_m/\tilde M_{m+1}\bigr)\simeq\Gal\bigl(M_m/\tilde M_{m+1}\cap M_m\bigr)\simeq\Gal\bigl(M_m/\tilde M_m\bigr), \] 
and then the inclusion $\tilde M_{m+1}\cdot M_m\subset M_{m+1}$ yields the desired surjection
\[ \Gal\bigl(M_{m+1}/\tilde M_{m+1}\bigr)\longepi\Gal\bigl(M_m/\tilde M_m\bigr). \] 
This explains the existence and the surjectivity of the maps in diagram \eqref{diagram}. Finally, one checks that the commutativity of \eqref{diagram} is an immediate consequence of the constructions.  
%\[ \xymatrix@R=15pt{&M_{m+1}\ar@{-}[d]\ar@{-}[ddl]\ar@{-}[ddr]\\&\tilde M_{m+1}\ar@{-}[ddl]\ar@{-}[ddr]\\M_m\ar@{-}[rd]\ar@{-}[d]&&\tilde L_{m+1}\ar@{-}[dl]\ar@{-}[d]\\\tilde M_m\ar@{-}[rd]&\tilde L_m\ar@{-}[d]&L_{m+1}\ar@{-}[dl]\\&L_m} \]

\subsection{Families of Kolyvagin primes} \label{sec-families} 

The purpose of this subsection is to show that one can manufacture a compatible sequence $(\ell_m{)}_{m\geq1}$ of Kolyvagin primes, each $\ell_m$ being coprime with $p^m$; here the word ``compatibility'' refers to the canonical maps of Galois theory. More precisely, our goal is to prove the following 

\begin{proposition}\label{prop-cheb}  
There is a sequence $\ell_\infty=(\ell_m{)}_{m\geq1}$ of Kolyvagin primes satisfying the following conditions: 
\begin{enumerate}
\item $\Frob_{\ell_m}=[\tau g_m]$ in $\Gal(M_m/\Q)$ with $g_m\in\Gal\bigl(M_m/K_m(A_{p^m})\bigr)$ such that 
\[ (g_m{)}_m\in\Gal\bigl(M_\infty/K_\infty(A)\bigr); \] 
\item restriction induces an injective group homomorphism 
\[ \overline{\res}_{\ell_m}:\overline{\Sigma}_m\longmono H^1_f(K_{m,\ell_m},A_{p^m}); \]
\item $(\ell_m+1\pm a_{\ell_m})/p^m$ are invertible in $\cO_\p/p^m\cO_\p$. 
\end{enumerate}
\end{proposition}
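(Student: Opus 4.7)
The plan is to construct $\ell_\infty$ inductively on $m$ via the Chebotarev density theorem, closely following the strategy of \cite[Lemma 6]{Ber1}. First, I would fix a compatible system $(g_m)_m\in\Gal(M_\infty/K_\infty(A))=\invlim_m\Gal(M_m/K_m(A_{p^m}))$ satisfying the following \emph{genericity property}: the image of each $g_m$ under the surjection \eqref{gal-onto-eq} defines, via the $\mathcal G_m$-equivariant identification $\Gal(\tilde L_m/L_m)\simeq\Hom\bigl((\overline{\Sigma}_m)^{G_m},A_{p^m}\bigr)$ and the splitting induced by \eqref{eq-Sigma_m}, a homomorphism whose restriction to each eigenspace $\bigl(\overline{\Sigma}_m^{(\pm)}\bigr)^{G_m}$ is injective. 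Such $g_m$'s exist at every finite level because of the large-image property \eqref{lemma-G_m} together with the irreducibility of the residual representation (\cite[Lemma 3.8]{LV}); a compatible lift $(g_m)_m$ exists thanks to the surjectivity of the vertical maps in \eqref{diagram}.

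Next, for each $m$, I would apply Chebotarev density to the finite Galois extension $M_m(\mu_{p^{m+1}})/\Q$ and select a prime $\ell_m\nmid NDp$ whose Frobenius conjugacy class projects to $[\tau g_m]$ in $\Gal(M_m/\Q)$ and to a prescribed class in $\Gal(\Q(\mu_{p^{m+1}})/\Q)\simeq(\Z/p^{m+1}\Z)^\times$ characterized by $\ell_m\equiv -1\pmod{p^m}$ but $\ell_m\not\equiv -1\pmod{p^{m+1}}$. The compatibility of these two projections along the (controlled) intersection $M_m\cap\Q(\mu_{p^{m+1}})$ is already built into the Kolyvagin condition, which via the determinant of the Galois representation prescribes $\ell_m\bmod p^m$. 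This choice immediately yields condition~(1) and, by further projection to $\Gal(K(A_{p^m})/\Q)$, the fact that $\ell_m$ is a Kolyvagin prime relative to $p^m$.

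For condition~(2), since the image of $\tau g_m$ in $\Gal(K_m/\Q)$ is nontrivial on $K$ and trivial on $K_m/K$ (the latter because $g_m$ fixes $K_m(A_{p^m})$), $\ell_m$ is inert in $K$ and splits completely in $K_m/K$, so the isomorphism \eqref{lemma-kol} applies. The restriction at $\ell_m$ of a class $s\in H^1_f(K_m,A_{p^m})$ coincides, via \eqref{eq13}, with the evaluation $\varphi_s(\Frob_{\ell_m})$, and the genericity from Step~1 makes this injective on each of $\overline{\Sigma}_m^{(\epsilon)}$ and $\overline{\Sigma}_m^{(-\epsilon)}$, hence on $\overline{\Sigma}_m$ via \eqref{eq-Sigma_m}. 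For condition~(3), the characteristic polynomial of $\Frob_{\ell_m}$ acting on $T$ is $X^2-a'_{\ell_m}X+\ell_m$; the Kolyvagin condition forces $a'_{\ell_m}\equiv 0$ and $\ell_m\equiv -1$ modulo $p^m$, and the cyclotomic constraint on $\ell_m\bmod p^{m+1}$ (together with a parallel constraint on $a'_{\ell_m}\bmod p^{m+1}$ obtained by slightly enlarging the auxiliary Chebotarev extension) guarantees that $(\ell_m+1\pm a_{\ell_m})/p^m$ is a unit in $\cO_\p/p^m\cO_\p$.

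The hardest part will be Step~1: verifying that the set of $g_m$ satisfying the genericity condition forms a compatible inverse system under the projections of \eqref{diagram}, so that a coherent element $(g_m)_m\in\Gal(M_\infty/K_\infty(A))$ actually exists. This combinatorial compatibility rests on the large-image property \eqref{lemma-G_m}, which supplies sufficiently many automorphisms of each $\overline{\Sigma}_m^{(\pm)}$ to realize the required injectivity at every level, together with the structural coherence of the splittings $\overline{\Sigma}_m=\overline{\Sigma}_m^{(\epsilon)}\oplus\overline{\Sigma}_m^{(-\epsilon)}$ as $m$ varies.
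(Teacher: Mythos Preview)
Your overall strategy matches the paper's: construct compatible $h_m\in\Gal(\tilde L_m/L_m)$ with a genericity property, lift through diagram \eqref{diagram} to a coherent family $(g_m)_m$, and apply Chebotarev. However, your argument for condition~(2) has two genuine gaps.

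First, the genericity condition is misidentified. The Frobenius at a prime $\tilde\lambda_m$ of $K_m$ above $\ell_m$ is not $\tau g_m$ but $(\tau g_m)^2=g_m^\tau g_m$, because $\ell_m$ is inert in $K$ so $\tilde\lambda_m$ has residue degree $2$ over $\ell_m$. Hence evaluation of cocycles at the relevant Frobenius is governed by the image $h_m^\tau h_m$ in $\Gal(\tilde L_m/L_m)$, not by $h_m$ itself. This is why the paper imposes that $(h_m^{(\pm)})^\tau h_m^{(\pm)}$ have period $p^{m^{(\pm)}}$, achieved by sending a generator of $(\overline{\Sigma}_m^{(\pm)})^{G_m}$ into the \emph{matching} $\tau$-eigenspace $A_{p^m}^{(\pm)}$. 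Your requirement that $h_m$ be injective on each $(\overline{\Sigma}_m^{(\pm)})^{G_m}$ does not imply this: if $\phi$ maps a generator to an element $a$ of full order lying in the \emph{opposite} eigenspace $A_{p^m}^{(\mp)}$, then $\pm\tau a+a=0$ and evaluation at $h_m^\tau h_m$ collapses.

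Second, even with the correct genericity, evaluation at a single Frobenius only yields injectivity of the map \eqref{r} on $(\overline{\Sigma}_m)^{G_m}$, not on all of $\overline{\Sigma}_m$. You pass directly to injectivity on each full $\overline{\Sigma}_m^{(\pm)}$, but these are submodules of $R_m^{(\pm)}$ and need not be cyclic over $\cO_\p/p^m\cO_\p$, so a single evaluation cannot detect them. The paper supplies the missing step: if $0\neq s\in\ker(\overline{\res}_{\ell_m})$, one may assume $s$ is $p$-torsion, so $R_m s$ is a nonzero vector space over the residue field on which the $p$-group $G_m$ acts; then $(R_m s)^{G_m}\neq 0$ by \cite[Proposition 26]{Serre}, and this nonzero invariant lies in the kernel of the single-prime restriction \eqref{r}, contradicting its injectivity on $(\overline{\Sigma}_m)^{G_m}$.
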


\begin{proof} For each sign $\pm$ choose $h_m^{(\pm)}\in\Gal\bigl(\tilde L^{(\pm)}_m/L_m\bigr)$ such that the period of $\bigl(h^{(\pm)}_m\bigr)^\tau h_m^{(\pm)}$ is $p^{m^{(\pm)}}$. To justify the existence of an element with this property, observe that if $h_m^{(\pm)}$ corresponds to the homomorphism $\phi:\bigl(\overline{\Sigma}^{(\pm)}_m\bigr)^{G_m}\rightarrow A_{p^m}$ then
$\bigl(h^{(\pm)}_m\bigr)^\tau h_m^{(\pm)}$ corresponds to $x\mapsto \pm\tau\phi(x)+\phi(x)$. In light of this, to show the existence of such an $h_m^{(\pm)}$ it suffices to choose a $\phi$ that takes a generator of $\bigl(\overline{\Sigma}_m^{(\pm)}\bigr)^{G_m}$ to an element of order $p^{m^{(\pm)}}$ in $A_{p^m}^{(\pm)}$. Define $h_m:=\bigl(h_m^{(+)},h_m^{(-)}\bigr)\in\Gal\bigl(\tilde L_m/L_m\bigr)$ and choose the sequence $(h_m{)}_{m\geq1}$ so that the image of $h_{m+1}$ via surjection \eqref{cheb2} is $h_m$. Using diagram \eqref{diagram}, select also a compatible sequence of elements $g_m\in\Gal(M_m/K_m(A_{p^m}))$ such that the image of $g_m$ in $\Gal\bigl(\tilde L_m/L_m\bigr)$ is $h_m$. For every integer $m\geq1$ choose a prime number $\ell_m$ such that 
\begin{equation} \label{choice-of-primes}
\Frob_{\ell_m}=[\tau g_m]\quad\text{in $\Gal(M_m/\Q)$}.
\end{equation}
Clearly, $\ell_m$ is a Kolyvagin prime and the required compatibility conditions are fulfilled by construction, so (1) is satisfied. To check (2), we must show that the restriction is injective. For this, 
fix a prime $\mathfrak l_m$ of $M_m$ above $\ell_m$ satisfying $\Frob_{\mathfrak l_m/\ell_m}=\tau g_m$. Then the restriction of $\Frob_{\mathfrak l_m/\ell_m}$ to $\Gal(\tilde L_m/L_m)$ corresponds to an injective homomorphism 
\[ \phi_{\mathfrak l_m/\ell_m}:\bigl(\overline\Sigma_m\bigr)^{G_m}\longmono A_{p^m} \]
consisting in the evaluation at $\Frob_{\mathfrak l_m/\ell_m}$; namely, one has 
\[ \phi_{\mathfrak l_m/\ell_m}(s)=s\bigl(\Frob_{\mathfrak l_m/\ell_m}\bigr) \] 
for all $s\in\bigl(\overline\Sigma_m\bigr)^{G_m}$. The choice of $\mathfrak l_m$ determines a prime $\tilde \lambda_m$ of $K_m$ above $\ell_m$, and the completion of $K_m$ at $\tilde \lambda_m$ is canonically isomorphic to the completion $K_{\lambda_m}$ of $K$ at the unique prime $\lambda_m$ of $K$ above $\ell_m$. It follows that the canonical restriction map 
\begin{equation} \label{r}
\bigl(\overline\Sigma_m\bigr)^{G_m}\longmono H^1_f(K_{\lambda_m},A_{p^m})
\end{equation}
is injective, since the same is true of the composition of this map with the isomorphism 
\[ H^1_f(K_{\lambda_m},A_{p^m})\simeq A_{p^m} \]
that is given precisely by evaluation at Frobenius (recall \eqref{lemma-kol}). Suppose now that $s\in\overline{\Sigma}_m$ is non-zero and $\overline{\res}_{\ell_m}(s)=0$. In particular, the submodule $(R_ms)^{G_m}$ of $\bigl(\overline{\Sigma}_m\bigr)^{G_m}$ is sent to zero, via \eqref{r}, in the direct summand $H^1_f(K_{\lambda_m},A_{p^m})$ of $H^1_f(K_{m,\ell_m},A_{p^m})$ corresponding to $\tilde\lambda_m$. Up to multiplying $s$ by a suitable power of $p$, we may assume that $s$ is $p$-torsion. Now $R_ms$ is a non-trivial $\cO_\p/p\cO_\p$-vector space on which the $p$-group $G_m$ acts. By \cite[Proposition 26]{Serre}, the submodule $(R_ms)^{G_m}$ is non-trivial, and this contradicts the injectivity of \eqref{r}. Summing up, we have proved that all choices of a sequence $\ell_\infty={(\ell_m)}_{m\geq1}$ satisfying \eqref{choice-of-primes} enjoy properties (1) and (2) in the statement of the proposition. 

The finer choice of a sequence $\ell_\infty$ satisfying (3) as well can be made by arguing as in the proof of \cite[Proposition 12.2, (3)]{Nek}; see the proof of \cite[Proposition 3.26]{LV} for details. \end{proof}

\subsection{Local duality} \label{duality-sec}

The aim of this subsection is to bound the $\Lambda$-rank of $\Lambda x\oplus\Lambda y$ by a $\Lambda$-module $V(\ell_\infty)$ that surjects onto $\Lambda x\oplus\Lambda y$; the module $V(\ell_\infty)$ depends on the choice of a compatible family $\ell_\infty={(\ell_m)}_{m\geq1}$ of Kolyvagin primes as in \S \ref{sec-families}.  

By \cite[Proposition 3.8]{BK}, if $E$ is a number field and ${v}$ is a prime of $E$ then there is a perfect pairing 
\[ \langle\cdot,\cdot\rangle_{E,{v}}:H^1(E_{v},T/p^mT)\times H^1(E_{v},A_{p^m})\longrightarrow \Q_p/\Z_p \]
under which $H^1_f(E_{v},T/p^mT)$ and $H^1_f(E_{v},A_{p^m})$ are exact annihilators of each other. From this one deduces the existence (see, e.g., \cite[eq. (34)]{LV}) of a $\tau$-antiequivariant isomorphism   
\[ \delta_{E,{v}}:H^1_s(E_{v},T/p^mT)\overset\simeq\longrightarrow H^1_f(E_v,A_{p^m})^\vee. \]
Now let $\ell$ be a Kolyvagin prime and write $\delta_{m,\lambda}$ as a shorthand for $\delta_{K_m,\lambda}$. Taking the direct sum of the maps $\delta_{m,\lambda}$ over all the primes $\lambda\,|\,\ell$, we get a $\tau$-antiequivariant isomorphism 
\begin{equation} \label{delta-ell}
\delta_{m,\ell}:H^1_s(K_{m,\ell},T/p^mT)\overset{\simeq}\longrightarrow H^1_f(K_{m,\ell},A_{p^m})^\vee.
\end{equation}
Composing $\delta_{m,\ell}$ with the dual of the restriction $\res_{m,\ell}:H^1_f(K_m,A_{p^m})\rightarrow H^1_f(K_{m,\ell},A_{p^m})$, we get a map 
\[ H^1_s(K_{m,\ell},T/p^mT)\xrightarrow{\delta_{m,\ell}} H^1_f(K_{m,\ell},A_{p^m})^\vee\xrightarrow{\res_{m,\ell}^\vee} H^1_f(K_m,A_{p^m})^\vee \]
whose image we denote by $V_m(\ell)$. When $m$ is understood, we shall simply write 
$\delta_{\ell}$ for $\delta_{m,\ell}$ and $V(\ell)$ for $V_m(\ell)$. 

\begin{proposition} \label{lemma4.4}
Let $\ell_\infty$ be the sequence of Kolyvagin primes of Proposition \ref{prop-cheb}.
\begin{enumerate} 
\item For every $m\geq1$ there is a canonical surjection $V(\ell_m)\twoheadrightarrow W_m^{(\epsilon)}\oplus W_m^{(-\epsilon)}$. 
\item For every $m\geq1$ there is a canonical surjection $V(\ell_{m+1})\twoheadrightarrow V(\ell_m)$. 
\end{enumerate}
\end{proposition}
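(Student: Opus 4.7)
My plan is to handle (1) and (2) by Pontryagin duality, using Proposition \ref{prop-cheb}(2) and Lemma \ref{inj-selmer-lemma} respectively, with the main technical weight concentrated in (2).

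For part (1), I would dualize the injection $\overline{\res}_{\ell_m}\colon\overline{\Sigma}_m\hookrightarrow H^1_f(K_{m,\ell_m},A_{p^m})$ supplied by Proposition \ref{prop-cheb}(2) to obtain a surjection $H^1_f(K_{m,\ell_m},A_{p^m})^\vee\twoheadrightarrow\overline{\Sigma}_m^\vee$. Since $\overline{\res}_{\ell_m}$ is the factorization of $\res_{m,\ell_m}|_{\Sigma_m}$ through the quotient $\Sigma_m\twoheadrightarrow\overline{\Sigma}_m$, the composition
\begin{equation*}
V(\ell_m)\;\longmono\;H^1_f(K_m,A_{p^m})^\vee\;\longepi\;\Sigma_m^\vee
\end{equation*}
lands in $\overline{\Sigma}_m^\vee\subset\Sigma_m^\vee$; surjectivity onto $\overline{\Sigma}_m^\vee$ follows by lifting any $\psi\in\overline{\Sigma}_m^\vee$ to an extension $\tilde\psi$ along the injection $\overline{\res}_{\ell_m}$ and applying $\res_{m,\ell_m}^\vee\circ\delta_{m,\ell_m}$. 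The final step is identifying $\overline{\Sigma}_m^\vee$ with $W_m^{(\epsilon)}\oplus W_m^{(-\epsilon)}$ as subgroups of $\Sigma_m^\vee$: both equal the annihilator $\ker(\vartheta_m^\vee)^\perp$, the former directly from the definition $\overline{\Sigma}_m=\Sigma_m/\ker(\vartheta_m^\vee)$ and the latter via the duality identity $\mathrm{im}(\vartheta_m)=\ker(\vartheta_m^\vee)^\perp$ applied to the map $\vartheta_m$ of finite abelian groups.

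For part (2), the natural candidate is the Pontryagin dual $\iota^\vee\colon H^1_f(K_{m+1},A_{p^{m+1}})^\vee\twoheadrightarrow H^1_f(K_m,A_{p^m})^\vee$ of the inclusion $\iota$ of Lemma \ref{inj-selmer-lemma}. Using the annihilator characterization $V(\ell_m)=\ker(\res_{m,\ell_m})^\perp$, showing that $\iota^\vee$ carries $V(\ell_{m+1})$ onto $V(\ell_m)$ amounts to the equality
\begin{equation*}
\iota^{-1}\bigl(\ker(\res_{m+1,\ell_{m+1}})\bigr)=\ker(\res_{m,\ell_m})
\end{equation*}
inside $H^1_f(K_m,A_{p^m})$. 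The compatibility of the Kolyvagin primes from Proposition \ref{prop-cheb}(1)---namely that the Frobenius elements $g_m$ form a compatible system in $\invlim_m\Gal(M_m/K_m(A_{p^m}))=\Gal(M_\infty/K_\infty(A))$---was engineered precisely so that the local vanishing conditions at $\ell_m$ and $\ell_{m+1}$ match on classes pulled back from level $m$. Concretely, for $c\in H^1_f(K_m,A_{p^m})$ one identifies $\res_{m,\ell_m}(c)$ and $\res_{m+1,\ell_{m+1}}(\iota(c))$ with evaluations of the underlying cocycle at the respective Frobenius substitutions, and these evaluations agree under the compatibility of $(g_m)_m$.

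The hard part is the verification of this last equality of kernels in (2): whereas (1) is pure Pontryagin duality, (2) relies essentially on how the chosen Kolyvagin primes at consecutive levels interact with the inverse system $\Gal(M_\bullet/K_\bullet(A_{p^\bullet}))$. I would carry it out by combining a commutative-diagram analysis intertwining $\iota$, the local Tate duality isomorphisms $\delta_{m,\ell_m}$ and $\delta_{m+1,\ell_{m+1}}$, and the evaluation-at-Frobenius description of $H^1_f(K_{m,\ell},A_{p^m})\simeq R_m$ for Kolyvagin primes (in the spirit of \cite[Lemma 6.8]{BesDM}), together with a cardinality check using the explicit structure of $V(\ell_m)$ as a cyclic $\tilde R_m$-module to promote the containment to an equality. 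The resulting surjection $V(\ell_{m+1})\twoheadrightarrow V(\ell_m)$ should moreover fit into a commutative square with the surjections from (1) and the natural maps $W_{m+1}^{(\pm)}\twoheadrightarrow W_m^{(\pm)}$, so that the whole system is compatible with the transition from $\tilde R_{m+1}$ to $\tilde R_m$.
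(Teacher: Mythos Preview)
Your approach to part (1) is essentially identical to the paper's: dualize the injection of Proposition \ref{prop-cheb}(2), observe that the resulting surjection onto $\overline{\Sigma}_m^\vee$ factors through $V(\ell_m)$, and identify $\overline{\Sigma}_m^\vee$ with $\mathrm{im}(\vartheta_m)=W_m^{(\epsilon)}\oplus W_m^{(-\epsilon)}$.

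For part (2), your reformulation via the annihilator characterization $V(\ell_m)=\ker(\res_{m,\ell_m})^\perp$ and the target equality $\iota^{-1}\bigl(\ker(\res_{m+1,\ell_{m+1}})\bigr)=\ker(\res_{m,\ell_m})$ is correct and equivalent to what the paper does. The paper argues instead by a direct commutative diagram: it identifies $H^1_s(K_{m,\ell_m},T/p^mT)\simeq R_m^{(+)}\oplus R_m^{(-)}$ via \eqref{lemma-kol} and local duality, uses the Frobenius compatibility of Proposition \ref{prop-cheb}(1) to make the square with left vertical arrow $R_{m+1}^{(\pm)}\twoheadrightarrow R_m^{(\pm)}$ commute, and then reads off surjectivity of $V(\ell_{m+1})\to V(\ell_m)$ from the surjectivity of that left arrow.

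The one misstep in your plan is the proposed ``cardinality check using the explicit structure of $V(\ell_m)$ as a cyclic $\tilde R_m$-module'' to promote a containment to an equality. This is both unnecessary and not obviously available. It is unnecessary because \emph{both} inclusions in the kernel equality follow directly from the single identity $\res_{\mu_{m+1}}(\iota(c))=\res_{\lambda_m}(c)$ at the distinguished primes, once you combine it with the $G_m$-equivariance of $\res_{m,\ell_m}$ and the fact that $\tau\cdot\iota(c)=\iota(\bar\tau\cdot c)$ for $\tau\in G_{m+1}$ with image $\bar\tau\in G_m$: running over all $\sigma\in G_m$ (respectively lifts $\tilde\sigma\in G_{m+1}$) transports the vanishing at the single prime to vanishing at all primes above $\ell_m$ (respectively $\ell_{m+1}$). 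And it is not obviously available because you have not established that $V(\ell_m)$ is cyclic over $\tilde R_m$, nor is there an evident cardinality comparison between subgroups living at different levels $m$ and $m+1$. Drop the cardinality check and argue the kernel equality symmetrically via equivariance; alternatively, just run the paper's diagram with the surjection $R_{m+1}^{(\pm)}\twoheadrightarrow R_m^{(\pm)}$, which packages the same content.
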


\begin{proof}  Fix an integer $m\geq1$. Composing the isomorphism $\delta_{m,\ell_m}$ in \eqref{delta-ell} with the dual of the map $\overline{\res}_{\ell_m}$ introduced in part (2) of Proposition \ref{prop-cheb}, we get a surjection 
\[ H^1_s(K_{m,\ell_m},T/p^mT)\xrightarrow{\delta_{\ell_m}}H^1_f(K_{m,\ell},A_{p^m})^\vee\xrightarrow{\overline{\res}_{\ell_m}^\vee}\overline{\Sigma}_m^\vee \] 
that, by definition, factors through $V(\ell_m)=V_m(\ell_m)$. Now $\overline{\Sigma}_m^\vee\simeq{\rm im}(\vartheta_m)$, which is isomorphic to $W_m^{(\epsilon)}\oplus W_m^{(-\epsilon)}$. 
Therefore we get an $\tilde R_m$-equivariant surjection 
\[ V(\ell_m)\longepi W_m^{(\epsilon)}\oplus W_m^{(-\epsilon)}, \]
which proves part (1). 

As for part (2), let us define the map $V_{m+1}(\ell_{m+1})\rightarrow V_m(\ell_m)$. There is a commutative diagram
\[ \xymatrix{H^1_s(K_{m+1},T/p^{m+1}T)\ar@{->>}[r]\ar^-{\cores}[d]&V_{m+1}(\ell_{m+1})\ar@{^(->}[r]& H^1_f(K_{m+1},A_{p^{m+1}})^\vee\ar^-{\res^\vee}[d]\\
H^1_s(K_{m},T/p^{m}T)\ar@{->>}[r]&V_m(\ell_m))\ar@{^(->}[r]& H^1_f(K_{m},A_{p^{m}})^\vee} \]
in which we have written simply ``res'' for the map $H^1_f(K_m,A_{p^m})\hookrightarrow H^1_f(K_{m+1},A_{p^{m+1}})$ of Lemma \ref{inj-selmer-lemma} and ``cores'' for the analogous map $H^1_s(K_{m+1},T/p^{m+1}T)\rightarrow H^1_s(K_m,T/p^mT)$ induced by corestriction. This shows that the dual of the restriction map gives a map $V_{m+1}(\ell_{m+1})\rightarrow V_m(\ell_m)$. Furthermore, by definition, ${\rm im}(\vartheta_m)$ is a quotient of ${\rm im}(\vartheta_{m+1})$, hence the map $\overline{\Sigma}_{m+1}^\vee\twoheadrightarrow\overline{\Sigma}_m^\vee$ is surjective. Now there is yet another commutative diagram 
\[ \xymatrix{R_{m+1}^{(+)}\oplus R_{m+1}^{(-)}\simeq H^1_s(K_{m+1,\ell_{m+1}},T/p^{m+1}T)\ar@{->>}[d]\ar@{->>}[r]&
V_{m+1}(\ell_{m+1})\ar@{->>}[r]\ar[d]&\overline{\Sigma}_{m+1}^\vee\ar@{->>}[d] \\
R_{m}^{(+)}\oplus R_{m}^{(-)}\simeq H^1_s(K_{m,\ell_m},T/p^{m}T)\ar@{->>}[r]&V_m(\ell_m)\ar@{->>}[r]&\overline{\Sigma}_m^\vee} \] 
where the vertical maps are (co)restrictions or their duals and the leftmost vertical arrow is surjective because the same is true of the map $R_{m+1}\rightarrow R_m$ (here we are implicitly using \eqref{lemma-kol} and the identification provided by local duality). The desired surjectivity follows. \end{proof}

In the rest of the paper, let $\ell_\infty={(\ell_m)}_{m\geq1}$ be the sequence of Kolyvagin primes constructed in Proposition \ref{prop-cheb}. It follows from part (2) of Proposition \ref{lemma4.4} that we can define the $\Lambda$-module 
\[ V(\ell_\infty):=\invlim_mV(\ell_m). \] 

\begin{proposition} \label{prop4.5}
There is a surjection 
\[ V(\ell_\infty)\longepi\Lambda x\oplus\Lambda y. \]
\end{proposition}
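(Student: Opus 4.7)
The plan is to take the inverse limit of the surjections $V(\ell_m)\twoheadrightarrow W_m^{(\epsilon)}\oplus W_m^{(-\epsilon)}$ from Proposition \ref{lemma4.4}(1), which are compatible with the transition maps of Proposition \ref{lemma4.4}(2), and identify the target with $\Lambda x\oplus\Lambda y$. Because all of the $V(\ell_m)$ and $W_m^{(\pm)}$ are finite modules, the Mittag--Leffler condition is automatic and $\invlim_m$ is exact, so we obtain a surjection
\[
V(\ell_\infty)\longepi\invlim_m W_m^{(\epsilon)}\oplus\invlim_m W_m^{(-\epsilon)}.
\]

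Write $P_m^{(\epsilon)}:=(p_m\circ\vartheta)(\Lambda^{(\epsilon)}\oplus\{0\})$, which is the image of $\Lambda x$ under $p_m:\mathcal X_\infty\to H^1_f(K_m,A_{p^m})^\vee$; by construction, $W_m^{(\epsilon)}=P_m^{(\epsilon)}/Z_m$, yielding the short exact sequence $0\to Z_m\to P_m^{(\epsilon)}\to W_m^{(\epsilon)}\to0$ of finite groups. The map $p_m$ is the reduction $\mathcal X_\infty\twoheadrightarrow\mathcal X_\infty/(I_m+p^m\Lambda)\mathcal X_\infty$, and an elementary computation with $(1+X)^{p^m}-1$ in $\cO_\p[\![X]\!]$ shows that the ideals $I_m+p^m\Lambda$ are cofinal with the powers of the maximal ideal of $\Lambda$. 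By Krull's intersection theorem applied to the finitely generated $\Lambda$-module $\mathcal X_\infty$ (Corollary \ref{coro-fin-gen}), this implies $\bigcap_m(I_m+p^m\Lambda)\mathcal X_\infty=0$. Hence the natural map $\Lambda x\to\invlim_m P_m^{(\epsilon)}$ is injective; it is also surjective, because $\Lambda x$ is compact and surjects onto every $P_m^{(\epsilon)}$, so its image in the inverse limit is both dense and closed. Thus $\Lambda x\simeq\invlim_m P_m^{(\epsilon)}$, and by the same reasoning $\Lambda y\simeq\invlim_m P_m^{(-\epsilon)}$.

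Finally, $\invlim_m Z_m$ embeds into both $\Lambda x$ and $\Lambda y$ inside $\mathcal X_\infty$ through the identifications just established, so the equality $\Lambda x\cap\Lambda y=\{0\}$ recorded at the start of Section \ref{euler-sec} forces $\invlim_m Z_m=0$. Applying the exact functor $\invlim_m$ to the short exact sequence above then yields $\invlim_m W_m^{(\epsilon)}\simeq\Lambda x$, and similarly $\invlim_m W_m^{(-\epsilon)}\simeq\Lambda y$. Combined with the initial surjection, this proves $V(\ell_\infty)\twoheadrightarrow\Lambda x\oplus\Lambda y$. The main technical point is the identification $\invlim_m P_m^{(\epsilon)}\simeq\Lambda x$, which reduces to completeness of $\mathcal X_\infty$ in its maximal-ideal topology; the rest is routine manipulation of compatible systems.
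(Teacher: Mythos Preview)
Your proof is correct and follows essentially the same route as the paper: take the inverse limit of the surjections from Proposition~\ref{lemma4.4}(1) using Mittag--Leffler, then use the short exact sequences involving $Z_m$ together with $\invlim_m Z_m\subset\Lambda x\cap\Lambda y=\{0\}$ to identify the target with $\Lambda x\oplus\Lambda y$. The paper packages this with the single sequence $0\to Z_m\to\mathrm{im}(p_m\circ\vartheta)\to W_m^{(\epsilon)}\oplus W_m^{(-\epsilon)}\to0$ and asserts $\invlim_m\mathrm{im}(p_m\circ\vartheta)=\Lambda x\oplus\Lambda y$ without further comment, whereas you split into two parallel sequences and supply the justification via Krull's intersection theorem and compactness; this extra care is welcome but does not change the argument.
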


\begin{proof} The inverse limit of the maps in part (1) of Proposition \ref{lemma4.4} gives a surjection 
\begin{equation} \label{eq-surj}
V(\ell_\infty)\longepi\invlim_m\bigl(W_m^{(\epsilon)}\oplus W_m^{(-\epsilon)}\bigr),
\end{equation} 
where we use the fact that the projective system satisfies the Mittag--Leffler condition, as all the modules involved are finite. On the other hand, there is a short exact sequence 
\[ 0\longrightarrow Z_m\longrightarrow {\rm im}(p_m\circ\vartheta)\longrightarrow W_m^{(\epsilon)}\oplus W_m^{(-\epsilon)}\longrightarrow 0, \] 
and passing to inverse limits shows that there is a short exact sequence 
\[ 0\longrightarrow\invlim_mZ_m\longrightarrow\Lambda x\oplus \Lambda y\longrightarrow\invlim_m\bigl(W_m^{(\epsilon)}\oplus W_m^{(-\epsilon)}\bigr)\longrightarrow0. \]
Since $\Lambda x\cap\Lambda y=\{0\}$ and $\sideset{}{_m}\invlim Z_m\subset\Lambda x\cap\Lambda y$, we have $\sideset{}{_m}\invlim Z_m=0$, therefore $\Lambda x\oplus \Lambda y$ is isomorphic to $\sideset{}{_m}\invlim\bigl(W_m^{(\epsilon)}\oplus W_m^{(-\epsilon)}\bigr)$. Combining this with \eqref{eq-surj} gives the result. \end{proof}

\subsection{Kolyvagin classes} \label{kolyvagin-subsec}

We briefly review the construction of Kolyvagin classes attached to Heegner cycles. 

Let $\ell$ be a Kolyvagin prime relative to $p^m$; in particular, $p^m\,|\,\ell+1$ and $p^m\,|\,a_\ell$. Assume also that $p^{m+1}\nmid \ell+1\pm a_\ell$. Let $H_\ell$ be the ring class field of $K$ of conductor $\ell$. The fields $K_m$ and $H_\ell$ are linearly disjoint over the Hilbert class field $H_1$ of $K$, and $\Gal(H_\ell/H_1)$ is cyclic of order $\ell+1$. Let $H_{m,\ell}^{(p)}$ be the maximal subextension of the composite $K_mH_\ell $ having $p$-power degree over $K_m$ and let 
\[ \alpha(\ell):=\cor_{H_{\ell p^{m+1}}/H_{m,\ell}^{(p)}}(y_{\ell p^{m+1}})\in H^1_f\bigl(H_{m,\ell}^{(p)},T\bigr) \] 
be the corestriction from $H_{\ell p^{m+1}}$ to $H_{m,\ell}^{(p)}$ of the Heegner cycle $y_{\ell p^{m+1}}\in H^1_f(H_{\ell p^{m+1}},T)$. 

Set $\mathfrak G_\ell:=\Gal\bigl(H_{m,\ell}^{(p)}/K_m\bigr)$. By class field theory, if $n_\ell:=\ord_p(\ell+1)$ then $\mathfrak G_\ell\simeq\Z/p^{n_\ell}\Z$; in particular, $m\,|\,n_\ell$. Fix a generator 
$\sigma_\ell$ of $\mathfrak G_\ell$ and consider the Kolyvagin operator 
\[ {\bf D}_\ell:=\sum_{i=1}^{p^{n_\ell}-1}i\sigma_\ell^i\in\Z/p^m\Z[\mathfrak G_\ell]. \] 
One has $(\sigma_\ell-1){\bf D}_\ell=-\tr_{H_{m,\ell}^{(p)}/K_m}$, and therefore 
\[ {\bf D}_\ell\bigl(\alpha(\ell)\bigr)\in H^1_f\bigl(H_{m,\ell}^{(p)},T/{p^m}T{\bigr)}^{\!\mathfrak G_\ell}. \]
Moreover, restriction induces an isomorphism 
\begin{equation} \label{res-isom-kol-eq}
\res_{H_{m,\ell}^{(p)}/K_m}:H^1_f(K_m,T/{p^m}T)\overset\simeq\longrightarrow H^1_f\bigl(H_{m,\ell}^{(p)},T/{p^m}T{\bigr)}^{\!\mathfrak G_\ell}. 
\end{equation}

We can give the following

\begin{definition}
The \emph{Kolyvagin class} $d(\ell)\in H^1(K_m,T/p^mT)$ attached to $\ell$ is the class corresponding to ${\bf D}_\ell(\alpha(\ell))$ under isomorphism \eqref{res-isom-kol-eq}.
\end{definition}

If $\ell$ is a Kolyvagin prime relative to $p^m$ then there exists (\cite[p.116]{Nek}) a $\tau$-antiequivariant isomorphism of $\cO_\p/p^m\cO_\p$-modules 
\[ \phi_{m,\ell}:H^1_s(K_{m,\ell},T/p^mT)\overset\simeq\longrightarrow H^1_f(K_{m,\ell},T/p^mT). \] 
Finally, recall the map $\partial_\ell$ that was defined at the end of \S \ref{sec-Bloch-Kato}, which consists of the sum of the projections to the singular local cohomology groups for all primes of $K_m$ dividing $\ell$. 

\begin{proposition} \label{prop-d-ell} 
The class $d(\ell)$ enjoys the following properties: 
\begin{enumerate}
\item if $v$ is a prime of $K_m$ not dividing $\ell$ then $\res_v(d(\ell))$ belongs to $H^1_f(K_m,T/p^mT)$; 
\item $\phi_{m,\ell}\bigl(\partial_\ell(d(\ell))\bigr)=u_\ell\cdot\res_\ell(\alpha_m)$ with $u_\ell\in\mathcal O_\p^\times$.  
\end{enumerate}
\end{proposition}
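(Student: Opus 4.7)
The two properties are the standard ``Euler system norm relations'' for Kolyvagin derivatives, adapted from Kolyvagin's original treatment for elliptic curves to the Heegner-cycle setting of Nekov\'a\v{r}. We sketch the route.

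\textbf{Part (1).} First note that $\alpha(\ell)\in H^1_f\bigl(H_{m,\ell}^{(p)},T\bigr)$ by construction, as a corestriction of Heegner cycles, each of which lies in the Bloch--Kato Selmer group (since they come from the image of the Abel--Jacobi map discussed in \S \ref{sec3.1}). Reducing modulo $p^m$ and applying the integral Kolyvagin operator ${\bf D}_\ell\in\Z/p^m\Z[\mathfrak G_\ell]$ preserves the local Selmer conditions at every place (they form $\mathcal O_\p$-submodules stable under the Galois action), so ${\bf D}_\ell\bigl(\alpha(\ell)\bigr)$ lies in $H^1_f\bigl(H_{m,\ell}^{(p)},T/p^mT\bigr)^{\mathfrak G_\ell}$. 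Now fix a prime $v$ of $K_m$ with $v\nmid\ell$, and let $w$ be a prime of $H_{m,\ell}^{(p)}$ above $v$. Since $H_{m,\ell}^{(p)}/K_m$ is unramified at $v$ (as $H_\ell/H_1$ is ramified only at $\ell$), the decomposition group at $w$ injects into the unramified quotient and one has the standard compatibility $\res_w\bigl(\res_{H_{m,\ell}^{(p)}/K_m}(d(\ell))\bigr)=\res_w\bigl({\bf D}_\ell(\alpha(\ell))\bigr)\in H^1_f\bigl(H_{m,\ell,w}^{(p)},T/p^mT\bigr)$. A class in $H^1(K_{m,v},T/p^mT)$ whose restriction to an unramified extension lies in the local Selmer group is itself in the local Selmer group; this is the standard ``unramified base change'' for Bloch--Kato conditions (for $v\nmid p$ it reduces to the statement that inflation preserves the unramified subgroup; for $v\mid p$ one uses crystalline base change, noting that $H_{m,\ell,w}^{(p)}/K_{m,v}$ is unramified). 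This yields $\res_v(d(\ell))\in H^1_f(K_m,T/p^mT)$.

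\textbf{Part (2).} This is the delicate computation at the ``active'' prime $\ell$ and is the heart of the proposition. The strategy is to trace how $\alpha(\ell)$ differs from $\alpha_m$ via the Euler system relation for Heegner cycles of conductor $\ell p^{m+1}$ (the analogue of the norm formula in Lemma \ref{lemma-Heegner} but for multiplication of the conductor by $\ell$). Namely, one has a relation of the shape $\tr_{H_{\ell p^{m+1}}/H_{p^{m+1}}}(y_{\ell p^{m+1}})=a_\ell\cdot y_{p^{m+1}}$, and more importantly a congruence $y_{\ell p^{m+1}}\equiv(\text{Frob}_\lambda\text{-action})\cdot y_{p^{m+1}}\pmod{\lambda}$ at a prime $\lambda$ above $\ell$ (the ``congruence formula''), exploiting the fact that $\ell$ is inert in $K$ and that the reductions of the two Heegner points differ by the action of Frobenius. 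Using $(\sigma_\ell-1){\bf D}_\ell=p^{n_\ell}-\tr_{H_{m,\ell}^{(p)}/K_m}$ together with this congruence gives, after localization at $\ell$ and projection onto the singular quotient, an identity
\[
\partial_\ell\bigl(d(\ell)\bigr)=u_\ell'\cdot(\text{the image of }\res_\ell(\alpha_m)\text{ under }\phi_{m,\ell}^{-1})
\]
for some $u_\ell'\in\mathcal O_\p^\times$; applying $\phi_{m,\ell}$ yields the desired equality. The unit $u_\ell$ is read off from the Euler factor and from the explicit form of $\phi_{m,\ell}$ (which is the identification of singular and unramified local cohomologies on an inert unramified prime).

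\textbf{Main obstacle.} The technical heart is the congruence formula for Heegner cycles at conductor $\ell p^{m+1}$; in Kolyvagin's original elliptic-curve argument this is the ``first reciprocity law'', and in Nekov\'a\v{r}'s setting it is established by a careful analysis of the reduction of CM-cycles on the Kuga--Sato variety at primes inert in $K$ (\cite[Sections 6--7]{Nek}). Granting this congruence, the rest of the proof is a formal manipulation with Galois cohomology and local duality, and the unit $u_\ell$ is tracked by unwinding the definitions of ${\bf D}_\ell$ and $\phi_{m,\ell}$. We refer to the argument of \cite[Proposition 10.2]{Nek} and \cite[\S 4]{Ber1}, which carries over to our setting without substantive change.
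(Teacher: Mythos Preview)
Your proposal is correct and takes essentially the same route as the paper: both rely on \cite[Proposition 10.2]{Nek} for primes $v\nmid p$ (including the explicit reciprocity law at $\ell$ that gives part (2)), and on $p$-adic Hodge theory for primes $v\mid p$. The paper's own proof is even terser than yours---it simply cites Nekov\'a\v{r} for $v\nmid p$ and Faltings' de Rham conjecture for open varieties for $v\mid p$, deferring all details to \cite[Proposition 3.17 and Lemma 3.19]{LV}---so your expanded sketch is entirely consistent with it.
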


\begin{proof} This follows from \cite[Proposition 10.2]{Nek} for primes $v\nmid p$, while for primes $v\,|\,p$ it is a consequence of the de Rham conjecture for open varieties proved by Faltings. For details, see \cite[Proposition 3.17 and Lemma 3.19]{LV}. \end{proof} 

\subsection{Global duality} \label{duality-sec2}

In this subsection we use Kolyvagin classes, combined with global reciprocity laws, to bound the rank of the $\Lambda$-module $V(\ell_\infty)$, which was introduced in \S \ref{duality-sec} and surjects onto $\Lambda x\oplus\Lambda y$. 

Recall that if $E$ is a number field, $s\in H^1(E,A_{p^m})$ and $t\in H^1(E,T/p^mT)$ then
 \begin{equation} \label{global-duality} 
\sum_{v}{\big\langle\res_v(s),\res_v(t)\big\rangle}_{E,v}=0
\end{equation}
where $v$ ranges over all finite places of $E$ and ${\langle\cdot,\cdot\rangle}_{E,v}$ is the local Tate pairing at $v$. 

\begin{proposition} \label{prop-global-duality} 
The $\Lambda$-rank of $V(\ell_\infty)$ is at most $1$. 
\end{proposition}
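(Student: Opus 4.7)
The plan is to carry out the $\Lambda$-adic Euler system argument of Bertolini, combining the global reciprocity law \eqref{global-duality} with the Kolyvagin classes $d(\ell_m)$ and the carefully chosen primes from Proposition \ref{prop-cheb}.

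Fix $m\geq1$. For any $s\in H^1_f(K_m,A_{p^m})$, apply \eqref{global-duality} with $t=d(\ell_m)\in H^1(K_m,T/p^mT)$. By Proposition \ref{prop-d-ell}(1), $\res_v(d(\ell_m))\in H^1_f(K_{m,v},T/p^mT)$ for every $v\nmid\ell_m$; since local Tate duality makes $H^1_f$ on the $T/p^mT$ side the exact annihilator of $H^1_f$ on the $A_{p^m}$ side, every local pairing away from $\ell_m$ vanishes. The only surviving relation is
\[
\bigl\langle\res_{\ell_m}(s),\partial_{\ell_m}(d(\ell_m))\bigr\rangle_{K_m,\ell_m}=0\qquad\text{for all }s\in H^1_f(K_m,A_{p^m}).
\]
This says exactly that $\delta_{m,\ell_m}(\partial_{\ell_m}(d(\ell_m)))$ lies in the kernel of $\res_{\ell_m}^\vee$, so the image of $\partial_{\ell_m}(d(\ell_m))$ in $V(\ell_m)$ is trivial.

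Consequently $V(\ell_m)$ is a quotient of $H^1_s(K_{m,\ell_m},T/p^mT)/\Lambda_m\cdot\partial_{\ell_m}(d(\ell_m))$. The explicit reciprocity law at $\ell_m$ provided by Proposition \ref{prop-d-ell}(2), together with the isomorphism $\phi_{m,\ell_m}$ of \S\,\ref{kolyvagin-subsec}, identifies this quotient with $H^1_f(K_{m,\ell_m},T/p^mT)/\Lambda_m\cdot\res_{\ell_m}(\alpha_m)$. Invoking the $T/p^mT$-variant of the isomorphism \eqref{lemma-kol}, the ambient module is free of rank $2$ over $R_m$ with $\tau$-eigenspace decomposition $R_m^{(+)}\oplus R_m^{(-)}$. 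The crucial observation is that condition (3) of Proposition \ref{prop-cheb}, namely the invertibility of $(\ell_m+1\pm a_{\ell_m})/p^m$ in $\cO_\p/p^m\cO_\p$, is precisely what is needed to ensure that $\res_{\ell_m}(\alpha_m)$ generates a free rank-one $R_m$-summand of the appropriate $\tau$-eigenspace, so that $V(\ell_m)$ is a cyclic $R_m$-module.

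Finally, the compatibilities built into the family $\ell_\infty$ by Proposition \ref{prop-cheb}(1), together with Proposition \ref{lemma4.4}(2) and Lemma \ref{lemma-Heegner} for the Heegner cycles, give an inverse system of cyclic quotients $V(\ell_m)$. Since $\Lambda\simeq\varprojlim_m R_m$, taking limits produces a surjection $\Lambda\twoheadrightarrow V(\ell_\infty)$, from which the bound on the $\Lambda$-rank follows. The main technical obstacle lies in this last step: the Kolyvagin primes $\ell_m$ vary with $m$ and the modules $H^1_s(K_{m,\ell_m},T/p^mT)$ live over different completions, so patching the level-by-level cyclicity into a coherent inverse-limit statement requires a Mittag--Leffler type argument analogous to (and compatible with) the one already used in the proof of Proposition \ref{prop4.5}.
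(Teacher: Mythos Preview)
Your overall strategy---apply the global reciprocity law \eqref{global-duality} to the pair $\bigl(s,d(\ell_m)\bigr)$, deduce that the image of $\partial_{\ell_m}(d(\ell_m))$ in $V(\ell_m)$ vanishes, and then bound $V(\ell_m)$ by the resulting quotient---is exactly the paper's. The gap is in the step where you conclude that $V(\ell_m)$ is a \emph{cyclic} $R_m$-module.

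Condition (3) of Proposition~\ref{prop-cheb} does not do what you claim. That condition is purely local at $\ell_m$: it guarantees that the constant $u_{\ell_m}$ in Proposition~\ref{prop-d-ell}(2) is a unit, so that $\partial_{\ell_m}(d(\ell_m))$ and $\res_{\ell_m}(\alpha_m)$ generate the same $R_m$-submodule under the $\tau$-antiequivariant isomorphism $\phi_{m,\ell_m}$. It says nothing about the \emph{size} of that submodule, which is governed by the global Heegner cycle. Under the identifications of \S\ref{sec-duals} one has $\res_{\ell_m}(\mathcal E_m)\simeq R_m^{(\epsilon)}\theta_m$ with $\theta_m\in R_m$ in general a non-unit; consequently your quotient is isomorphic to $\bigl(R_m^{(\epsilon)}/\theta_m R_m^{(\epsilon)}\bigr)\oplus R_m^{(-\epsilon)}$, which is not cyclic unless $\theta_m\in R_m^\times$.

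The paper's remedy is to prove only that $\theta_m V(\ell_m)$ is cyclic for each $m$, and then to exploit the choice of isomorphisms $i_m^{(\epsilon)}$ made in \S\ref{sec-duals}, whose entire purpose is to force the compatibility $\theta_\infty=(\theta_m)_m\in\Lambda$. One then obtains that $\theta_\infty V(\ell_\infty)$ is cyclic over $\Lambda$; since $\pi(x)\neq0$ forces $\theta_\infty\neq0$ and $\Lambda$ is a domain, multiplication by $\theta_\infty$ preserves $\Lambda$-rank, and the bound follows. Your argument misses this $\theta_\infty$-twist, which is precisely the input that the machinery of \S\ref{sec-duals} was set up to provide.
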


\begin{proof} For every $m\geq1$ consider the Kolyvagin class $d(\ell_m)\in H^1(K_m,T/p^mT)$. Since there is an isomorphism of $\tilde R_m$-modules $\res_{\ell_m}(\mathcal E_m)\simeq R_m^{(\epsilon)}\theta_m$, where $\theta_m\in R_m$ is defined as in \S \ref{sec-duals}, part (2) of Proposition \ref{prop-d-ell} implies that 
\[ \res_{\ell_m}\bigl(R_m d(\ell_m)\bigr)\simeq R_m^{(-\epsilon)}\theta_m \] 
as $\tilde R_m$-modules. If we let $\xi_m^{(\pm)}$ be generators of $H^1(K_{m,\ell_m},T/p^mT)^{(\pm)}$ as $R_m^{(\pm)}$-module then 
\begin{equation} \label{eq24}
\res_{\ell_m}\bigl(R_md(\ell_m)\bigr)\cap\Big(H^1(K_{m,\lambda_m},T/p^mT)^{(\epsilon)}\oplus\{0\}\Big)=\{0\}.
\end{equation}
Using \cite[\S 1.2, Lemma 7]{Ber1}, write $\res_{\ell_m}(d(\ell_m))=\bigl(\rho_m\theta_m\xi_m^{(\epsilon)},\nu_m\theta_m\xi^{(-\epsilon)}_m\bigr)$ for suitable $\rho_m\in R_m$ and $\nu_m\in R_m^\times$. Define $W_m:=R_m\bigl(\rho_m\xi_m^{(\epsilon)},\nu_m\xi_m^{(-\epsilon)}\bigr)$. Then \eqref{eq24} gives a decomposition 
\[ H^1(K_{m,\ell_m},T/p^mT)\simeq H^1(K_{m,\ell_m},T/p^mT)^{(\epsilon)}\oplus W_m, \] 
from which we deduce that 
\[ \theta_mH^1(K_{m,\ell_m},T/p^mT)\simeq\theta_mH^1(K_{m,\ell_m},T/p^mT)^{(\epsilon)}\oplus \res_{\ell_m}\bigl(R_m d(\ell_m)\bigr). \]
By part (1) of Proposition \ref{prop-d-ell}, the class $d(\ell_m)$ is trivial at all the primes not dividing $\ell_m$, and then \eqref{global-duality} implies that $(\delta_{\ell_m}\circ\res_{\ell_m})(d(\ell_m))=0$. Summing up, we get 
\[ \theta_mV(\ell_m)\simeq\delta_{\ell_m}\bigl(\theta_m H^1(K_{m,\ell_m},T/p^mT)\bigr)\simeq\delta_{\ell_m}\bigl(\theta_m H^1(K_{m,\ell_m},T/p^mT)^{(\epsilon)}\bigr). \] 
It follows that $\theta_mV(\ell_m)$ is a cyclic $R_m$-module for all $m$. Since $\theta_\infty\in\Lambda$ and $\Lambda=\sideset{}{_m}\invlim R_m$, we conclude that $\theta_\infty V(\ell_\infty)$ is a cyclic $\Lambda$-module, and then $V(\ell_\infty)$ is cyclic over $\Lambda$ as well. \end{proof}

\subsection{Completion of the proof of the main result} \label{completion-subsec}

Recall from the beginning of Section \ref{euler-sec} that our goal is to show that the element $y\in\ker(\pi)$ is $\Lambda$-torsion. But this is immediate: since $\Lambda x$ is free of rank $1$, combining Propositions \ref{prop4.5} and \ref{prop-global-duality} shows that $\Lambda y$ is $\Lambda$-torsion, which concludes the proof. 

Finally, we record the following consequence of Theorem \ref{prop3.5}.

\begin{corollary} 
The $\Lambda$-module $V(\ell_\infty)$ has rank $1$.
\end{corollary}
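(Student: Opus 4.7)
The plan is to sandwich the $\Lambda$-rank of $V(\ell_\infty)$. The upper bound comes for free from Proposition \ref{prop-global-duality}, which already asserts that the $\Lambda$-rank is at most $1$. So all that remains is to exhibit a quotient of $V(\ell_\infty)$ of $\Lambda$-rank at least $1$.

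For the lower bound I would apply Proposition \ref{prop4.5}: it provides a surjection $V(\ell_\infty) \twoheadrightarrow \Lambda x \oplus \Lambda y$, and since surjections cannot increase $\Lambda$-rank, it suffices to see that $\Lambda x \oplus \Lambda y$ itself has $\Lambda$-rank at least $1$. This in turn reduces to checking that $\Lambda x$ is free of rank $1$, which is precisely the input tacitly invoked in the completion of the argument for Theorem \ref{thm4.1}: by Theorem \ref{prop3.5} combined with the isomorphism $\Hom_\Lambda(\mathcal E_\infty^\vee, \Lambda) \simeq \mathcal H_\infty$ of Proposition \ref{hom-image-prop}, the $\Lambda$-module $\mathcal E_\infty^\vee$ has rank $1$. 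Consequently the eigenvector $x$ chosen at the start of Section \ref{euler-sec} with $\pi(x) \neq 0$ can be taken non-torsion in $\mathcal X_\infty$, so that $\Lambda x$ is indeed free of rank $1$.

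There is no genuine obstacle here. Once Theorem \ref{prop3.5} (Howard's extension of Cornut's non-triviality result) has been plugged in to secure the freeness of $\mathcal H_\infty$, the corollary amounts to a short bookkeeping combination of Propositions \ref{prop4.5} and \ref{prop-global-duality}, and as such is merely packaging the rank-$1$ information already established for $\Lambda x$ during the proof of Theorem \ref{thm4.1}.
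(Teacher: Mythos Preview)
Your proposal is correct and matches the paper's (implicit) approach: the corollary is stated without proof as a consequence of Theorem \ref{prop3.5}, and the route you spell out---upper bound from Proposition \ref{prop-global-duality}, lower bound from Proposition \ref{prop4.5} together with the freeness of $\Lambda x$---is exactly what is intended. One minor remark: you need not say $x$ ``can be taken'' non-torsion, since any $x$ with $\pi(x)\neq0$ is automatically non-torsion (the paper notes in \S\ref{sec-complex-conj} that $\mathcal E_\infty^\vee$ is torsion-free), which is precisely what \S\ref{completion-subsec} uses.
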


\bibliographystyle{amsplain}
\bibliography{Iwasawa}
\end{document}